\begin{document}
\newtheorem{theorem}{Theorem}[section]
\newtheorem{lemma}[theorem]{Lemma}
\newtheorem{remark}[theorem]{Remark}
\def\qedbox{\hbox{$\rlap{$\sqcap$}\sqcup$}}
\makeatletter
  \renewcommand{\theequation}{%
   \thesection.\alph{equation}}
  \@addtoreset{equation}{section}
 \makeatother
\def\Op{\operatorname{Op}}
\title[Singular heat content and heat trace asymptotics]
{Heat Content Asymptotics with singular initial temperature distributions}
\author{M. van den Berg, P. Gilkey, and R. Seeley}
\begin{address}{MvdB: Department of Mathematics, University of
Bristol, University Walk, Bristol,\newline\phantom{...a}BS8 1TW, U.K.}\end{address}
\begin{email}{M.vandenBerg@bris.ac.uk}\end{email}
\begin{address}{PG: Mathematics Department, University of Oregon, Eugene, OR 97403, USA}\end{address}
\begin{email}{gilkey@uoregon.edu}\end{email}
\begin{address}{RS: 35 Lakewood Rd, Newton, MA 02461, USA}\end{address}
\begin{email}{r-seeley@comcast.net}\end{email}
\begin{abstract} {We  study the heat content asymptotics with either Dirichlet or Robin boundary conditions where the initial
temperature exhibits radial blowup near the boundary. We show that there is a complete small-time asymptotic expansion and give explicit
geometrical formulas for the first few terms in the expansion.}
\end{abstract}
\keywords{Dirichlet boundary conditions, heat content asymptotics, Robin boundary conditions.
\newline 2000 {\it Mathematics Subject Classification.} 58J35, 35K20, 35P99}
\maketitle
\section{Introduction}
\def\vandenberg{Setting
$S=0$ yields the Neumann boundary {operator}; {Neumann boundary conditions} correspond to a perfectly insulated boundary with
no heat loss.   It is perhaps worth pointing out in this setting that if either $\rho$ or $\phi$ are constant, then the heat content is
constant i.e. time independent. If $S = 0$ and if $\rho$ and $\phi$ are not constant, then the heat content is in general not constant.
This is  due to the fact that the initial temperature profile $\phi$ is ``flattening out" and that this ``flattening" out is integrated
in a non-uniform way in the integral for the heat content. Taking $S\ne0$
corresponds to a heat transfer across the boundary which is proportional to the temperature difference across the boundary. }

\subsection{The heat content}\label{sect-1.1}
Let $M$ be a compact Riemannian manifold of dimension
$m$ with smooth boundary
$\partial M$, let $dx$ and $dy$ be the Riemannian measures on $M$ and on $\partial M$, respectively, and
let $D$ be an operator of Laplace type on a smooth vector bundle $V$ over $M$. To impose suitable boundary conditions, we define
the {\it Dirichlet boundary operator} $B_{\mathcal{D}}\phi:=\phi|_{\partial M}$.
The operator $D$ defines a natural connection $\nabla$ as we shall discuss presently in Lemma \ref{lem-1.1}.
Let $S$ be an auxiliary endomorphism of the vector bundle $V|_{\partial M}$. Let
$${B_{\mathcal{R}}}\phi:=(\phi_{;m}+S\phi)|_{\partial M}$$
be the {\it Robin boundary operator} where $\phi_{;m}$ denotes the covariant derivative of $\phi$ with respect to the inward unit normal
vector field.
Let $B$ be either the Dirichlet or the Robin boundary operator; the associated boundary conditions are defined by setting $B\phi=0$.
It is well known that the heat equation
$$(\partial_t+D)u_B(x;t)=0,\quad
  Bu(\cdot;t)=0,\quad
  \lim_{t\downarrow 0}u_B(\cdot;t)=\phi(\cdot),
$$
 has a unique classical solution for a wide class of initial
temperature distributions $\phi$. We set $u=e^{-tD_B}\phi$ where
$D_B$ is the associated realization of $D$. The operator
$e^{-tD_B}$ has a kernel $p_B(x,\tilde x;t)$ which is smooth in
$(x,\tilde x;t)$ such that
$$u_B(x;t)=\int_Mp_B(x,\tilde x;t)\phi(\tilde x)d\tilde x\,.$$
If, for example, $D=\Delta:=\delta d$ is the scalar Laplacian, then one may take a complete spectral
resolution $\{\lambda_i,\phi_i\}$ of $\Delta_B$ and express
$$
p_B(x,\tilde x;t)=\sum_ie^{-t\lambda_i}\phi_i(x){{\bar\phi}}_i(\tilde x)\,.
$$

Let $\langle\cdot,\cdot\rangle$ denote
the natural pairing between $V$ and the dual bundle $\tilde V$. The {\it specific heat} $\rho$ of the
manifold  is a smooth section of $\tilde V$ and the {\it heat content}
$\beta$ is given by:
$$
\beta(\phi,\rho,D,B)(t):=\int_M\langle u_B(x;t),\rho(x)\rangle dx
   =\int_M\int_M\langle p_B(x,\tilde x;t)\phi(\tilde x),\rho(x)\rangle d\tilde xdx\,.
$$
Although in most practical applications it is customary to take $V$ and $\tilde V$ to be the trivial line
bundle and $D=\Delta$, it is necessary to work in this greater
generality as we shall see presently in Section \ref{sect-3}. {If $\partial M$ is empty, we shall omit the boundary
condition $B$ from the notation as it plays no role}. To simplify the notation, we shall write $D_B$, $p_B$, $\beta(\cdot,\cdot,D,B)$, and $u_B$ for the
most part except where it is useful to emphasize which boundary condition appears.

\subsection{Geometric preliminaries}\label{sect-1.2}
The following formalism will enable us to work in a tensorial and coordinate free fashion. We
adopt the {\it Einstein convention} and sum over repeated indices. Choose a system of local coordinates
$x=(x_1,...,x_m)$ for $M$ and choose a local trivialization of
$V$. Let $g_{\mu\nu}:=g(\partial_{x_\mu},\partial_{x_\nu})$ and let $g^{\mu\nu}$ be the inverse
matrix. As
$D$ is of Laplace type, there are matrices $A_1^\nu$ and $A_0$ so that:
\begin{equation}\label{eqn-1.a}
D=-\left\{g^{\mu\nu}\operatorname{Id}\partial_{x_\mu}\partial_{x_\nu}
+A_1^\nu\partial_{x_\nu}+A_0\right\}\,.
\end{equation}

If $\nabla$
is a connection on
$V$, we use $\nabla$ and the Levi--Civita connection to covariantly differentiate tensors of all
types and let `;' denote multiple covariant differentiation. We let $\phi_{;\mu\nu}$ be the components of
$\nabla^2\phi$. If $E$ is an auxiliary endomorphism of $V$, we define the associated {\it modified
Bochner Laplacian}:
$$D(g,\nabla,E)\phi:=-g^{\mu\nu}\phi_{;\nu\mu}-E\phi\,.$$
Let $\Gamma_{\mu\nu\sigma}$ and $\Gamma_{\mu\nu}{}^\sigma$ be the Christoffel symbols. We then have
\cite{G04}:
\begin{lemma}\label{lem-1.1}
If $D$ is an operator of Laplace type, then there exists a unique connection
$\nabla$ on $V$ and a unique endomorphism $E$ on $V$ so that $D=D(g,\nabla,E)$. The connection $1$-form
$\omega$ of $\nabla$ and the endomorphism $E$ are given by:
\begin{enumerate}
\item $\omega_\mu=\textstyle\frac12(g_{\mu\nu}A_1^\nu+g^{\sigma\varepsilon}\Gamma_{\sigma\varepsilon\mu}
\operatorname{Id})$.
\item $E=A_0-g^{\mu\nu}(\partial_{x_\nu}\omega_{\mu}+\omega_{\mu}\omega_{\nu}
    -\omega_{\sigma}\Gamma_{\mu\nu}{}^\sigma)$.
\end{enumerate}\end{lemma}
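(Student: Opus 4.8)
The plan is to reduce the statement to a purely local computation followed by a comparison of coefficients. Fix local coordinates $x=(x_1,\dots,x_m)$ on $M$ and a local trivialization of $V$. In this trivialization a connection $\nabla$ on $V$ is recorded by its connection $1$-form $\omega$, so that $\phi_{;\nu}=\partial_{x_\nu}\phi+\omega_\nu\phi$. Since the second covariant derivative $\phi_{;\nu\mu}$ differentiates the $V$-valued $1$-form $\phi_{;\nu}$, it also sees the Levi--Civita connection through the Christoffel symbols:
\[
\phi_{;\nu\mu}=\partial_{x_\mu}\partial_{x_\nu}\phi+(\partial_{x_\mu}\omega_\nu)\phi+\omega_\nu\partial_{x_\mu}\phi+\omega_\mu\partial_{x_\nu}\phi+\omega_\mu\omega_\nu\phi-\Gamma_{\mu\nu}{}^\sigma(\partial_{x_\sigma}\phi+\omega_\sigma\phi)\,.
\]
First I would substitute this into $D(g,\nabla,E)\phi=-g^{\mu\nu}\phi_{;\nu\mu}-E\phi$ and sort the outcome by the order of differentiation acting on $\phi$.

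The second-order part of $D(g,\nabla,E)$ is $-g^{\mu\nu}\operatorname{Id}\,\partial_{x_\mu}\partial_{x_\nu}$, which coincides with the leading part of \eqref{eqn-1.a} for every choice of $\nabla$ and $E$; this is simply a restatement of the hypothesis that $D$ is of Laplace type, so the symbol comparison imposes no constraint here. Next I would collect the first-order terms. Using the symmetry $g^{\mu\nu}=g^{\nu\mu}$ to relabel indices, the coefficient of $\partial_{x_\mu}\phi$ in $-g^{\mu\nu}\phi_{;\nu\mu}$ equals $-2g^{\mu\nu}\omega_\nu+g^{\sigma\varepsilon}\Gamma_{\sigma\varepsilon}{}^\mu$; equating this with the corresponding coefficient $-A_1^\mu$ of $D$ forces $g^{\mu\nu}\omega_\nu=\tfrac12\bigl(A_1^\mu+g^{\sigma\varepsilon}\Gamma_{\sigma\varepsilon}{}^\mu\bigr)$, which determines $\omega$ uniquely. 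Lowering the index with $g_{\mu\nu}$ and writing $\Gamma_{\sigma\varepsilon\mu}=g_{\mu\nu}\Gamma_{\sigma\varepsilon}{}^\nu$ produces the asserted formula for $\omega_\mu$. Finally, comparing the zeroth-order terms gives $-E\phi-g^{\mu\nu}\bigl(\partial_{x_\mu}\omega_\nu+\omega_\mu\omega_\nu-\Gamma_{\mu\nu}{}^\sigma\omega_\sigma\bigr)\phi=-A_0\phi$; since $\omega$ is now known this determines $E$ uniquely, and using $g^{\mu\nu}=g^{\nu\mu}$ to replace $g^{\mu\nu}\partial_{x_\mu}\omega_\nu$ by $g^{\mu\nu}\partial_{x_\nu}\omega_\mu$ yields the asserted formula for $E$. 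Uniqueness is built into this derivation, since the correspondence $(\nabla,E)\mapsto D(g,\nabla,E)$ is injective in each chart.

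The remaining point is globalization: one must know that the locally defined $\omega_\mu$ genuinely transforms as a connection $1$-form and that the locally defined $E$ transforms as an endomorphism, so that the chart-by-chart construction patches to global objects. The cleanest route is to invoke the local uniqueness just proved --- on an overlap the two locally built operators $D(g,\nabla,E)$ both equal the globally given $D$, hence the local data coincide there --- and this delivers a global $\nabla$ and $E$ at once. Alternatively one can check the inhomogeneous transformation rule for $\omega_\mu$ directly from its formula, using the known transformation laws of $A_1^\nu$ and of the Christoffel symbols under a change of coordinates and of frame. I expect this bookkeeping, rather than anything conceptual, to be the only slightly delicate step; the heart of the lemma is the elementary coefficient matching described above.
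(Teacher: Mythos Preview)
Your argument is correct and is exactly the standard coefficient-matching proof of this fact. Note, however, that the paper does not give its own proof of this lemma: it simply quotes the result from \cite{G04}. The computation you carry out---expanding $\phi_{;\nu\mu}$ in the local frame, reading off the second-, first-, and zeroth-order coefficients of $D(g,\nabla,E)$, and matching them against those of \eqref{eqn-1.a}---is precisely the argument one finds in that reference, so there is no genuine divergence of approach to discuss. Your handling of globalization via local uniqueness is also the usual one and is sound.
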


 We use the dual connection to covariantly differentiate the specific heat $\rho$; note that the connection
$1$ form
$\tilde\omega_\nu$ for $\tilde\nabla$ is the dual of $-\omega_\nu$. Thus
\begin{equation}\label{eqn-1.b}
  \tilde\nabla_{\partial_{x_\mu}}=\partial_{x_\mu}-
\textstyle\frac12(g_{\mu\nu}\tilde A_1^\nu+g^{\sigma\varepsilon}\Gamma_{\sigma\varepsilon\mu}\operatorname{id})
\quad\text{and}\quad
\tilde D\rho=-(g^{\mu\nu}\rho_{;\mu\nu}+\tilde E\rho)\,.
\end{equation}

 Near the
boundary, choose an orthonormal frame
$\{e_1,...,e_m\}$ for the tangent bundle so that $e_m$ is the inward unit geodesic normal; let indices
$a,b$ range from
$1$ to $m-1$ and index the induced orthonormal frame $\{e_1,...,e_{m-1}\}$ for the tangent
bundle of the boundary. We let `$:$' denote the components of tangential covariant differentiation defined by
$\nabla$ and the Levi-Civita connection of the boundary. Let $L_{ab}:=g(\nabla_{e_a}e_b,e_m)=\Gamma_{abm}$ be
the components of the second fundamental form. The difference between `$;$' and `$:$' is then
measured by $L$. For example, the following relation will prove useful subsequently:
\begin{equation}\label{eqn-1.c}
\begin{array}{l}
D\phi=-(\phi_{:aa}+\phi_{;mm}-L_{aa}\phi_{;m}+E\phi),\\
\tilde D\rho=-(\rho_{:aa}+\rho_{;mm}-L_{aa}\rho_{;m}+\tilde E\rho)\,.
\end{array}\end{equation}
Let $\operatorname{Ric}$ denote the Ricci tensor. Let ${\tilde B}_{\mathcal{R}}$ be the dual Robin boundary operator; it is defined by
the dual connection $\tilde\nabla$ and dual endomorphism $\tilde S$.

\subsection{Heat content asymptotics in the smooth setting}\label{sect-1.3}
One can use the calculus of pseudo-differential operators developed in
\cite{Gr68,Se69,Se69a} to show that:
\begin{theorem}\label{thm-1.2}
Let $\phi\in C^\infty(V)$ and let $\rho\in C^\infty(\tilde V)$.
There is a complete asymptotic expansion as
$t\downarrow0$ of the form:\par\noindent
$\displaystyle\beta(\phi,\rho,D,B)(t)\sim\sum_{n=0}^\infty\frac{(-t)^n}{n!}\int_M\langle\phi,\tilde
D^n\rho\rangle dx+
\sum_{k=0}^\infty t^{(1+k)/2}\int_{\partial M}\beta_{k}^{\partial M}(\phi,\rho,D,B)dy$.
\end{theorem}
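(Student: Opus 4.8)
The plan is to split the heat content into an ``interior'' part and a ``boundary'' part by means of a partition of unity, analyze each separately, and then assemble the two resulting expansions. Let me sketch the pieces.

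\medskip

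\noindent\textbf{Step 1: Interior contribution.} Choose a smooth cutoff $\chi$ that is $1$ on a collar neighborhood of $\partial M$ and supported near $\partial M$, and write $\phi=\chi\phi+(1-\chi)\phi$. For the piece $(1-\chi)\phi$, whose support is a positive distance from $\partial M$, finite propagation speed (or the standard off-diagonal decay of the heat kernel $p_B(x,\tilde x;t)=O(t^\infty)$ for $d(x,\tilde x)>\varepsilon>0$) shows that the boundary plays no role up to $O(t^\infty)$, so one may replace $p_B$ by the kernel on a closed double or on $\mathbb{R}^m$ and apply the classical interior heat expansion. Pairing against $\rho$ and integrating by parts repeatedly using \eqref{eqn-1.c} and the duality $\tilde D$ from \eqref{eqn-1.b} produces exactly the series $\sum_n\frac{(-t)^n}{n!}\int_M\langle\phi,\tilde D^n\rho\rangle\,dx$; the cutoff error terms all vanish to infinite order, so we recover the first sum in Theorem \ref{thm-1.2} with no half-integer powers.

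\medskip

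\noindent\textbf{Step 2: Boundary contribution via $\Psi$DO calculus.} The genuinely new behavior lives in $\chi\phi$, supported in a collar $\partial M\times[0,\varepsilon)$. Here I would introduce boundary normal coordinates, freeze coefficients, and use the parameter-dependent pseudo-differential / Boutet de Monvel calculus of \cite{Gr68,Se69,Se69a} to construct a parametrix for $(\partial_t+D_B)$ adapted to the boundary condition $B$ (Dirichlet $B_{\mathcal D}$ or Robin $B_{\mathcal R}=\phi_{;m}+S\phi$). The operator $\langle e^{-tD_B}\phi,\rho\rangle$ then becomes, modulo $O(t^\infty)$, a $\Psi$DO of order $-\infty$ in the parameter acting in the collar, whose symbol admits an asymptotic expansion in homogeneous terms. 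Substituting $t^{1/2}$ as the natural rescaling of the normal variable (the half-power is forced by the fact that $\partial_t$ has weight $2$ and $\partial_{x_m}$ has weight $1$, so $x_m\sim t^{1/2}$), each homogeneous symbol term contributes a power $t^{(1+k)/2}$ after the normal integration, with coefficients that are universal polynomial expressions in the jets of $\phi$, $\rho$, $g$, $L$, $E$, $\tilde E$, and (for Robin) $S$, integrated over $\partial M$. This yields the second sum $\sum_k t^{(1+k)/2}\int_{\partial M}\beta_k^{\partial M}\,dy$.

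\medskip

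\noindent\textbf{Step 3: Assembly and uniformity.} Adding the two expansions and checking that the cross terms (from differentiating $\chi$) are $O(t^\infty)$ gives the stated complete expansion. One must also verify that the two series are genuinely independent contributions and do not interfere — i.e. that the interior expansion contributes no half-integer powers and the boundary expansion's integer-order remainders are absorbed consistently; this is handled by the remainder estimates built into the parametrix construction. I expect \emph{the main obstacle} to be the bookkeeping in Step 2: carefully setting up the parameter-dependent symbol calculus near the boundary so that (a) the Dirichlet and Robin cases are treated uniformly, (b) the normal-variable integrals of the rescaled symbols converge and produce the correct half-powers, and (c) the error terms are controlled to all orders so that the expansion is genuinely asymptotic rather than merely formal. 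Once the $\Psi$DO machinery is in place the actual coefficients $\beta_k^{\partial M}$ come out by a mechanical, if lengthy, computation, which is deferred to later sections.
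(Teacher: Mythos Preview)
Your outline is sound, and in fact the paper does not give a detailed proof of Theorem~\ref{thm-1.2} at all: it is introduced with the sentence ``One can use the calculus of pseudo-differential operators developed in \cite{Gr68,Se69,Se69a} to show that \ldots'' and then taken as known. Any correct route through that calculus is acceptable.

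That said, the machinery the paper \emph{does} develop in detail (in Section~\ref{sect-2}, for the singular generalization Theorem~\ref{thm-1.4}) follows a somewhat different path from yours. Rather than building a heat parametrix directly and rescaling the normal variable, it works with the \emph{resolvent} $(D_B+\mu^2)^{-1}$: one establishes an asymptotic expansion of $\langle(D_B+\mu^2)^{-1}\phi,\rho\rangle$ in powers of $\mu^{-1}$ as $\mu\to\infty$ (Theorem~\ref{thm-2.1}, via Lemmas~\ref{lem-2.2}--\ref{lem-2.7}), and then recovers the heat content through the contour integral
\[
\beta(\phi,\rho,D,B)(t)=\frac{1}{2\pi\sqrt{-1}}\int_C e^{t\lambda}\langle(D_B+\lambda)^{-1}\phi,\rho\rangle\,d\lambda.
\]
This is the Seeley approach of \cite{Se69,Se69a}; your direct heat-parametrix route is closer in spirit to Greiner~\cite{Gr68}. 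The resolvent route has the advantage that holomorphy in the spectral parameter makes the separation of integer and half-integer powers, and the remainder estimates, completely mechanical; your approach avoids the extra transform but requires more care with the normal rescaling you describe in Step~2.

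One point to tighten in Step~1: the piece $(1-\chi)\phi$ cannot by itself produce the full interior sum $\sum_n\frac{(-t)^n}{n!}\int_M\langle\phi,\tilde D^n\rho\rangle\,dx$; it yields only the terms with $(1-\chi)\phi$ in place of $\phi$. The missing contribution $\int_M\langle\chi\phi,\tilde D^n\rho\rangle\,dx$ has to emerge from the integer-order part of your collar analysis in Step~2 --- this is exactly the ``integer-order remainders'' you allude to in Step~3, but the claim in Step~1 that ``the cutoff error terms all vanish to infinite order'' is not correct as stated. (Also, the heat equation does not have finite propagation speed; your parenthetical off-diagonal decay is the right mechanism.)
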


The {\it heat content coefficients} $\beta_k^{\partial M}$ are
locally computable and are given by local geometric invariants.

\begin{theorem}\label{thm-1.3}
Let $\phi\in C^\infty(V)$ and let $\rho\in C^\infty(\tilde V)$.
\begin{enumerate}
\item With Dirichlet boundary conditions, one has that:
\begin{enumerate}\item
$\int_{\partial M}\beta_0^{\partial M}(\phi,\rho,D,{B_{\mathcal{D}}})dy=
     -\frac2{\sqrt\pi}\int_{\partial M}\langle\phi,\rho\rangle dy$.
\item $\int_{\partial M}\beta_1^{\partial M}(\phi,\rho,D,{B_{\mathcal{D}}})dy=
     -\int_{\partial M}\{\langle \phi_{;m},\rho\rangle-\frac12\langle L_{aa}\phi,\rho\rangle\}dy$.
\item $\int_{\partial M}\beta_2^{\partial M}(\phi,\rho,D,{B_{\mathcal{D}}})dy=
      -\frac2{\sqrt\pi}\int_{\partial M}\{\langle\frac23\langle\phi_{;mm},\rho\rangle+\frac23
\langle\phi,\rho_{;mm}\rangle$\smallbreak$\qquad-\frac23L_{aa}\langle \phi,\rho\rangle_{;m}
+\langle\phi E,\rho\rangle-\langle\phi_{:a},\rho_{:a}\rangle+
\langle(\frac1{12}L_{aa}L_{bb}$\smallbreak$\qquad-\frac16L_{ab}L_{ab}-\frac16\operatorname{Ric}_{mm})\phi,\rho\rangle\}dy$.
\end{enumerate}
\item With Robin boundary conditions, one has that:
\begin{enumerate}
\item $\int_{\partial M}\beta_0^{\partial M}(\phi,\rho,D,{B_{\mathcal{R}}})dy=0$.
\item $\int_{\partial M}\beta_1^{\partial M}(\phi,\rho,D,{B_{\mathcal{R}}})dy
=\int_{\partial M}\langle\phi,{\tilde B_{\mathcal{R}}}\rho\rangle
dy$.
\item $\int_{\partial M}\beta_2^{\partial M}(\phi,\rho,D,{B_{\mathcal{R}}})dy=\frac{4}{3\sqrt\pi}\int_{\partial
M}\langle{B_{\mathcal{R}}}\phi,{\tilde B_{\mathcal{R}}}\rho
\rangle dy$.
\end{enumerate}\end{enumerate}
\end{theorem}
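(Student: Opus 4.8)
The existence and general form of the expansion is already in Theorem \ref{thm-1.2}, so the task is to determine the universal constants in the invariantly defined densities $\beta_k^{\partial M}$ for $k=0,1,2$. The plan is the standard one from invariance theory. First I would record that each $\beta_k^{\partial M}(\phi,\rho,D,B)$ is a local invariant, bilinear in $(\phi,\rho)$ and polynomial in the jets of $g$, $E$, $\tilde E$ and (in the Robin case) $S$, $\tilde S$ near $\partial M$; a rescaling argument (replace $g$ by $c^2 g$ and track the power $t^{(1+k)/2}$) shows it is homogeneous of weight $k$, where a covariant derivative or a factor of $L_{ab}$ or $S$ counts with weight $1$, while a factor of $E$ or of curvature counts with weight $2$. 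By H. Weyl's theorem on invariants of the orthogonal group, applied in the frame adapted to $\partial M$, together with the Gauss equation to eliminate redundant curvature monomials and integration by parts over $\partial M$ to remove pure tangential Laplacians, one writes the most general admissible expression with undetermined real coefficients: for $\beta_0$ only $\langle\phi,\rho\rangle$; for $\beta_1$ a combination of $\langle\phi_{;m},\rho\rangle$, $\langle\phi,\rho_{;m}\rangle$, $L_{aa}\langle\phi,\rho\rangle$ and, for Robin, $\langle S\phi,\rho\rangle$, $\langle\phi,\tilde S\rho\rangle$; for $\beta_2$ the list appearing in the statement together with the mixed term $\langle\phi_{;m},\rho_{;m}\rangle$ and the remaining intrinsic curvatures, which the computation below will show are either absent or reducible to the listed ones.

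The constants are then fixed by evaluating $\beta$ on explicit models. The basic model is the flat half-space $M=\mathbb R^{m-1}\times[0,\infty)$ with $D=-\Delta$ and trivial bundle, so $E=0$, $L=0$ and all curvatures vanish. Here the solution is written by the method of images: for Dirichlet, $p_{\mathcal D}(x,\tilde x;t)=p_0(x-\tilde x;t)-p_0(x^{*}-\tilde x;t)$ with $x^{*}$ the reflected point and $p_0$ the Euclidean kernel; for Robin, one uses the explicit one–dimensional kernel $p_{\mathcal R}(r,\tilde r;t)=p_0(r-\tilde r;t)+p_0(r+\tilde r;t)-2S\int_0^\infty e^{-Sw}p_0(r+\tilde r+w;t)\,dw$ in the normal variable. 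Taking $\phi$ and $\rho$ polynomial in the normal coordinate near the boundary and carrying out the resulting Gaussian integrals produces a full power series in $t^{1/2}$; the coefficients of $t^{1/2}$, $t$, $t^{3/2}$ give the universal constants attached to $\langle\phi,\rho\rangle$, to $\langle\phi_{;m},\rho\rangle$ and $\langle\phi,\rho_{;m}\rangle$, and to $\langle\phi_{;mm},\rho\rangle$, $\langle\phi,\rho_{;mm}\rangle$, $\langle\phi_{;m},\rho_{;m}\rangle$ respectively. This already yields Theorem \ref{thm-1.3}(1a), (2a), most of (1b), (2b), and the normal–derivative part of (1c), (2c); the factors of $\pi^{-1/2}$ come from $\int_0^\infty e^{-s^2}\,ds=\tfrac12\sqrt\pi$, and the vanishing $\beta_0^{\partial M}=0$ for Robin reflects that the leading boundary correction of the Robin kernel is $O(t)$ rather than $O(t^{1/2})$.

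To capture the remaining terms — the $L_{aa}$ coefficient in $\beta_1$, and $\langle\phi_{:a},\rho_{:a}\rangle$, $\langle E\phi,\rho\rangle$, $L_{aa}\langle\phi,\rho\rangle_{;m}$, $L_{aa}L_{bb}$, $L_{ab}L_{ab}$, $\operatorname{Ric}_{mm}$ in $\beta_2$ — I would use two further families of models. First, product manifolds $M=N\times[0,\infty)$ with $N$ closed: the heat kernel, the interior term $\langle\phi,\tilde D^n\rho\rangle$ and the boundary term all factor, which isolates the coefficient of $\langle\phi_{:a},\rho_{:a}\rangle$ and, upon putting a nontrivial $E$ on $N$, the coefficient of $\langle E\phi,\rho\rangle$; matching with the boundaryless interior expansion of Theorem \ref{thm-1.2} pins these down and provides consistency checks. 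Second, genuinely curved examples with nonvanishing second fundamental form and $\operatorname{Ric}_{mm}$, e.g.\ geodesic balls, or equivalently a direct parametrix computation: work in Fermi coordinates $(y,r)$ about $\partial M$, Taylor–expand $g$, $E$, $S$ and the volume element in $r$ (the coefficients being polynomial in $L$ and curvature via the Jacobi–field formulae), and solve the heat equation perturbatively off the flat half-line model by Duhamel's principle, each iteration contributing one further half–power of $t$, so that three iterations suffice for $\beta_2$. Performing the Gaussian normal integrals and the tangential integrations by parts produces the stated polynomials in $L$ and $\operatorname{Ric}$, while in the Robin case repeated use of the Robin condition together with integration by parts over $\partial M$ collapses the answer to the compact forms $\langle\phi,\tilde B_{\mathcal R}\rho\rangle$ and $\tfrac{4}{3\sqrt\pi}\langle B_{\mathcal R}\phi,\tilde B_{\mathcal R}\rho\rangle$.

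The main obstacle is this last step: separating the three weight–two boundary curvature monomials $L_{aa}L_{bb}$, $L_{ab}L_{ab}$, $\operatorname{Ric}_{mm}$ and managing the bookkeeping of the Duhamel perturbation (or, alternatively, supplying enough independent curved test manifolds — spheres, products of spheres, and the like — so that the linear system for the universal constants has full rank). One must also be scrupulous about the sign and normalization conventions — the inward unit normal, the formal adjoint $\tilde D$, the dual connection $\tilde\nabla$, and the dual endomorphisms $\tilde E$, $\tilde S$ — since these decide which a priori admissible terms (for instance $\langle\phi,\rho_{;m}\rangle$ in $\beta_1$, or $\langle\phi_{;m},\rho_{;m}\rangle$ in $\beta_2$) disappear after integration by parts and which combinations remain.
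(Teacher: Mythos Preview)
Your outline is a sound version of the standard invariance-theory proof, and it is essentially the route taken in the reference the paper cites (\cite{G04}); in fact the paper does not give an independent proof of Theorem~\ref{thm-1.3} at all, but quotes it as known and then recovers it \emph{a posteriori} as the specialization $\alpha=0$ of the singular Theorem~\ref{thm-1.5} (see Remark~\ref{rmk-1.6}).

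That said, it is worth comparing your plan with the functorial machinery the paper actually deploys for Theorem~\ref{thm-1.5}, since that is where the corresponding work is done. You propose to fix the curvature/second-fundamental-form constants by either evaluating on geodesic balls or running a Duhamel perturbation off the flat half-space; you correctly flag the bookkeeping there as the main obstacle. The paper sidesteps this entirely. Instead of curved test manifolds it uses a carefully chosen \emph{warped product} $\mathbb{T}^{m-1}\times[0,1]$ with metric $\sum_a e^{2f_a(r)}dy_a^2+dr^2$ and a non-Laplacian operator $D_M$ (Lemma~\ref{lem-3.5}): the heat problem decouples to the one-dimensional one, so the boundary contribution for $k>0$ vanishes \emph{exactly}, and reading off the structure equations (\ref{eqn-3.e})--(\ref{eqn-3.f}) gives clean linear relations among the $L_{aa}L_{bb}$, $L_{ab}L_{ab}$, $\operatorname{Ric}_{mm}$, $E$ coefficients with no Gaussian integrals at all. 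The remaining relations come not from more test manifolds but from the functorial identity $\partial_t\beta(\phi,\rho,D,B)=-\beta(\phi,\tilde D\rho,D,B)$ valid when $\rho_0=0$ (Dirichlet) or $\tilde B_{\mathcal R}\rho=0$ (Robin); see Lemmas~\ref{lem-3.6} and \ref{lem-4.2}. This recursion in $k$ is what forces, for instance, $\varepsilon_\alpha^3=0$ and organizes the Robin answer into the compact $B_{\mathcal R}\phi$, $\tilde B_{\mathcal R}\rho$ form---more structurally than the ``repeated integration by parts'' you envisage. Your approach would certainly work, but the paper's warped-product-plus-time-derivative method is cleaner and avoids exactly the obstacle you anticipated.
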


With Dirichlet boundary conditions,  $\beta_3^{\partial M}$ is known and partial information
concerning $\beta_4^{\partial M}$ is available. With Neumann boundary conditions, $\beta_3$, $\beta_4$, and $\beta_5$
are known. We refer to \cite{G04} for further details; our indexing convention here is slightly different from that employed in
\cite{G04}.

\subsection{Singular initial temperatures}\label{sect-1.4}
We refer to \cite{Be07} for earlier work in the singular setting
 as the results of that paper provide the motivation and the starting point for this present work.
We begin by using the geodesic flow defined by the unit inward
normal vector field to define a diffeomorphism for some $\varepsilon>0$ between the collar
${\mathcal{C}_\epsilon}:=\partial M\times[0,\varepsilon]$ and a neighborhood of the boundary in $M$ which identifies
$\partial M\times\{0\}$ with $\partial M$;  the curves
$r\rightarrow (y_0,r)$ for $r\in[0,\varepsilon]$ are then unit speed geodesics perpendicular to the
boundary and $r$ is the geodesic distance to the boundary. {We fix a smooth cutoff function $\chi=\chi(r)$ on $\mathcal{C}_\epsilon$
so that $\chi=1$ near $r=0$ and so that $\chi=0$ near $r=\epsilon$.}

 Let $\nabla$ be a connection on a bundle $W$
over
${\mathcal{C}_\epsilon}$. Let
$\psi\in C^\infty(W|_{\partial M})$. We use parallel translation along the normal geodesic rays to extend
$\psi$ to a section of $W$ over ${\mathcal{C}_\epsilon}$. We shall denote this extension by $\psi(y)$ to emphasize the fact that
$\nabla_{\partial_r}\psi=0$ on ${\mathcal{C}_\epsilon}$. If $W=V$, we use the connection $\nabla$ defined by $D$;
if $W=\tilde V$, we use the dual connection $\tilde\nabla$ defined by $\tilde D$. We refer to \cite{G04} for further details.

Fix $\alpha\in\mathbb{C}$. Let
$\phi$ be a smooth section to
$V$ on the interior of
$M$ {such that}
$\phi r^\alpha\in C^\infty({\mathcal{C}_\epsilon})$; the parameter $\alpha$ controls the growth (if
$\operatorname{Re}(\alpha)>0$) or decay (if
$\operatorname{Re}(\alpha)<0$) of $\phi$ near the boundary {assuming that $\phi r^\alpha$ does not vanish identically on the
boundary.} We assume the specific heat is smooth so $\rho\in C^\infty(\tilde V)$. We may then expand $\phi$ and $\rho$ on ${\mathcal{C}_\epsilon}$
in the form:
\begin{equation}\label{eqn-1.d}
\phi(y,r)\sim\sum_{i=0}^\infty\phi_i(y)r^{i-\alpha}\quad\text{and}\quad
\rho(y,r)\sim\sum_{i=0}^\infty\rho_i(y)r^i\quad\text{as}\quad r\downarrow0\,.
\end{equation}
The coefficients $\phi_i$ and $\rho_i$ are then uniquely specified by the requirement that
$\nabla_{\partial_r}\phi_i=0$ and
$\tilde\nabla_{\partial_r}\rho_i=0$. If
$D=\Delta$, then the associated connection is flat and, if $\alpha=0$, the expansion of Equation
(\ref{eqn-1.d}) is just the usual Taylor series expansion of the functions $\phi$ and $\rho$. In particular,
$$
\begin{array}{lll}\phi_0=(r^\alpha\phi)|_{\partial M},&
  \phi_1=\{\nabla_{\partial r}(r^\alpha\phi)\}|_{\partial M},&
  \phi_2=\frac12\{(\nabla_{\partial r})^2(r^\alpha\phi)\}|_{\partial M}\\
\rho_0=\rho|_{\partial M},&
  \rho_1=\{\tilde\nabla_{\partial r}\rho\}|_{\partial M},&
  \rho_2=\frac12\{(\tilde\nabla_{\partial r})^2\rho\}|_{\partial M}\,.\vphantom{\vrule height 11pt}
\end{array}$$

Fix $t>0$. Let $x\in M$ and let $\tilde x=(\tilde
y,r)\in{\mathcal{C}_\epsilon}$. Suppose first that $B=B_{\mathcal
D}$ defines Dirichlet boundary conditions. Then $p_B(x,(\tilde
y,\tilde r),t)|_{\tilde r=0}=0$. Since $p_B$ is smooth for $t>0$
and $\mathcal{C}_\epsilon$ is compact, we may use the Taylor
series expansion of $p_B$ to derive the estimate:
$$|p_B(x,(\tilde r,\tilde y);t)|\le C(t)\tilde r\quad\text{on}\quad\mathcal{C}_\epsilon\,.$$
If $\operatorname{Re}(\alpha)<2$, then the integral
$u_B(x;t)=\int_Mp_B(x,\tilde x;t)\phi(\tilde x)d\tilde x$
is convergent and {bounded in $x$}. Consequently the heat content
$$\beta(\phi,\rho,D,B)(t):=\int_M\langle u_B(x;t),\rho(x)\rangle dx$$
is well defined for $t>0$.
If $\operatorname{Re}(\alpha)<1$, then $\phi\in L^1$ and the initial heat content
$\beta(\phi,\rho,D,B)(0)=\int_M\langle\phi,\rho\rangle dx$ is finite. If, however,
$1\le \operatorname{Re}(\alpha)$, then this integral may be divergent and the initial heat content can be
infinite. Still, sufficient cooling near the boundary takes place for $u_B$ to be in $L^1$ for any $t>0$.
A similar phenomenon occurs in the setting of non-compact Riemannian manifolds with infinite volume and with regular boundary and initial
temperature $\phi=1$ \cite{Be06,BeGi04}.

As this cooling phenomenon does not occur with Robin boundary conditions $B=B_{\mathcal R}$, we shall always assume
$\operatorname{Re}(\alpha)<1$ in this instance.

It is important to observe that although we are primarily interested in positive real
$\alpha$, it is necessary to consider complex values of $\alpha$ to justify some analytic continuation arguments. It is also necessary to
permit
$\operatorname{Re}(\alpha)<0$ to justify some computations in Sections \ref{sect-3} and \ref{sect-4}; these values are of interest in their
own right since
$\phi$ is not smooth if $\alpha$ is not an integer.

If $1\le\operatorname{Re}(\alpha)<2$, we must regularize the integral
$\int_M\langle\phi,\rho\rangle dx$ since the integral may be
divergent. The Riemannian measure is not in general product near the boundary. Since, however, $dx=dydr$ on the boundary of
$M$, we may decompose
$$\langle\phi,\rho\rangle dx=\langle\phi_0,\rho_0\rangle r^{-\alpha}dydr+O(r^{1-\alpha})\,.$$
For $\operatorname{Re}(\alpha)<2$, define:
\begin{eqnarray}\label{eqn-1.e}
&&{\mathcal{I}_{\operatorname{Reg}}}(\phi,\rho)
:=\int_{M-{\mathcal{C}_\epsilon}}\langle\phi,\rho\rangle dx
+\int_{\mathcal{C}_\epsilon}
\left\{\langle\phi,\rho\rangle dx-\langle\phi_0,\rho_0\rangle r^{-\alpha}dydr\right\}\\
&+&\int_{\partial M}\langle\phi_0,\rho_0\rangle dy
\times\left\{\begin{array}{lll}\frac{\varepsilon^{1-\alpha}}{1-\alpha}&\text{if}&\alpha\ne1,\\
\ln(\varepsilon)&\text{if}&\alpha=1\,.
\end{array}\right.\nonumber
\end{eqnarray}
This is clearly independent of $\varepsilon$ and agrees with $\int_M\langle\phi,\rho\rangle$ if $\operatorname{Re}(\alpha)<1$.
Briefly, the regularization ${\mathcal{I}_{\operatorname{Reg}}}(\phi,\rho)$ is a meromorphic
function of alpha with a simple pole at $\alpha = 1$. When $\alpha = 1$, then
${\mathcal{I}_{\operatorname{Reg}}}(\phi,\rho)$ is defined as the constant term in the Laurent expansion at
$\alpha = 1$, thus dropping the pole.

 The following is the main analytic result of this paper:

\begin{theorem}\label{thm-1.4}
If $B=B_{\mathcal{D}}$, assume $Re(\alpha)<2$; if $B=B_{\mathcal{R}}$, assume $Re(\alpha)<1$.
\ \begin{enumerate}\item
If $\alpha\ne1$, then there exists a full
asymptotic expansion as
$t\downarrow0$ {of the form}:
\begin{eqnarray*}
\beta(\phi,\rho,D,B)(t)&\sim&\sum_{n=0}^\infty
\frac{(-t)^n}{n!}{\mathcal{I}_{\operatorname{Reg}}}(\phi,\tilde D^n\rho)\\&+&\sum_{k=0}^\infty
t^{(1+k-\alpha)/2}\int_{\partial M}\beta_{k,\alpha}^{\partial M}(\phi,\rho,D,B)dy\,.
\end{eqnarray*}
\item If $\alpha=1$ (and hence $B=B_{\mathcal{D}}$), then there exists a full
asymptotic expansion as
$t\downarrow0$  {of the form}:{\begin{eqnarray*}
&&\beta(\phi,\rho,D,B)(t)\sim\sum_{n=0}^\infty
\frac{(-t)^n}{n!}{\mathcal{I}_{\operatorname{Reg}}}(\phi,\tilde D^n\rho)\\&&\qquad+\sum_{k=0}^\infty
t^{k/2}\int_{\partial M}\left\{\beta_{k,1}^{\partial
M}(\phi,\rho,D,B)+\ln(t)\check\beta_k^{\partial M}(\phi,\rho,D,B)\right\}dy\,.
\end{eqnarray*}}
\item There are natural tangential {bilinear} differential operators $\beta_{k,\alpha,i,j}^{\partial M}$, which are holomorphic for
$\alpha\ne1$, so that
\begin{eqnarray*}
&&\beta_{k,\alpha}^{\partial M}(\phi,\rho,D,B)=\sum_{i+j\le k}
   \beta_{k,\alpha,i,j}^{\partial M}(\phi_i,\rho_j,D,B)\,.
\end{eqnarray*}
\item $\displaystyle\int_{\partial M}\check\beta_k^{\partial
M}(\phi,\rho,D,B)dy=\left\{\begin{array}{lll}
-\frac12\frac{(-1)^n}{n!}\int_{\partial M}\langle\phi_0,(\tilde
D^n\rho)_0\rangle dy&\text{if}&k=2n,\\
0&\text{if}&\text{$k$ is odd}\,.\end{array}\right.$
\end{enumerate}
\end{theorem}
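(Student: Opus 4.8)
The plan is to move the singularity off of the datum by duality, isolate a neighbourhood of the boundary, and there compute with the two--scale (boundary layer) expansion of the heat flow; the small--time asymptotics then fall out of the rescaling $r=\sqrt t\,s$ together with a Mellin/Laurent analysis in $\alpha$. First I would use $\beta(\phi,\rho,D,B)(t)=\int_M\langle\phi,v(\cdot;t)\rangle\,dx$, where $v$ solves the dual heat problem with initial temperature $\rho$ and with the boundary operator $\tilde B$ dual to $B$; this is legitimate for $\operatorname{Re}(\alpha)<2$ (Dirichlet) or $\operatorname{Re}(\alpha)<1$ (Robin), as in Section \ref{sect-1.4}. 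Split $\phi=(1-\chi)\phi+\chi\phi$ and use linearity of $\beta$. Since $\chi=1$ near $\partial M$, the piece $(1-\chi)\phi$ is smooth and supported away from $\partial M$, and on that support $v=\sum_{n<N}\frac{(-t)^n}{n!}\tilde D^n\rho+O(t^\infty)$ (interior parametrix / off--diagonal Gaussian bounds); since ${\mathcal{I}_{\operatorname{Reg}}}$ is linear in its first slot and equals $\int_M\langle\cdot,\cdot\rangle$ on smooth data, this piece contributes $\sum_n\frac{(-t)^n}{n!}{\mathcal{I}_{\operatorname{Reg}}}((1-\chi)\phi,\tilde D^n\rho)+O(t^\infty)$. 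For the collar piece $\chi\phi$ I would replace $v$ on $\operatorname{supp}\chi$ by the solution $\bar v$ of the analogous model problem on $\partial M\times[0,\infty)$, with metric and coefficients extended smoothly (or with an auxiliary condition at $r=\epsilon$); finite propagation speed makes $v-\bar v=O(t^\infty)$ on $\operatorname{supp}\chi$, and in the Dirichlet case $v-\bar v=O(r\,t^\infty)$, so $\int_{{\mathcal{C}_\epsilon}}\langle\chi\phi,v-\bar v\rangle\,dx=O(t^\infty)$ even though $\chi\phi\notin L^1$ when $1\le\operatorname{Re}(\alpha)<2$. It remains to analyse $\int_{{\mathcal{C}_\epsilon}}\langle\chi\phi,\bar v\rangle\,dx$ and to add the two contributions, noting $\beta_{k,\alpha}^{\partial M}(\chi\phi,\cdot)=\beta_{k,\alpha}^{\partial M}(\phi,\cdot)$ and ${\mathcal{I}_{\operatorname{Reg}}}((1-\chi)\phi,\cdot)+{\mathcal{I}_{\operatorname{Reg}}}(\chi\phi,\cdot)={\mathcal{I}_{\operatorname{Reg}}}(\phi,\cdot)$.

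The key input — and the step I expect to be the main obstacle — is the two--scale expansion of the dual model flow $\bar v$, which is essentially the analysis underlying Theorem \ref{thm-1.3} but now carried out with the anisotropy $r\sim\sqrt t$ kept explicit. In collar coordinates, with $dx=J(y,r)\,dy\,dr$, $J(y,0)=1$, one writes $\bar v=\bar v^{\mathrm{int}}+\bar v^{\mathrm{bl}}$, where $\bar v^{\mathrm{int}}=\sum_{n<N}\frac{(-t)^n}{n!}\tilde D^n\rho$ is the formal interior solution and the boundary layer correction has an expansion $\bar v^{\mathrm{bl}}(y,r;t)\sim\sum_{k'\ge0}t^{k'/2}\Psi_{k'}(y,r/\sqrt t)$ whose profiles $\Psi_{k'}(y,\cdot)$ are Schwartz at $+\infty$, depend linearly on $\rho$, are universal tensorial expressions in the jets (of total weight $k'$) of the metric, of $E$, of $S$ and of $L$, and obey $\tilde B\bar v^{\mathrm{bl}}=-\tilde B\bar v^{\mathrm{int}}$; in particular, with Dirichlet conditions $\Psi_{2n}(y,0)=-\frac{(-1)^n}{n!}(\tilde D^n\rho)_0(y)$ and $\Psi_{k'}(y,0)=0$ for $k'$ odd, while for Robin conditions $\Psi_0\equiv0$. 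Obtaining this with enough uniformity in $(y,t)$ to integrate over $\partial M$ and to control all remainders — and with the weight bookkeeping that makes the sums in part (3) finite — is where the real work lies; one either quotes and upgrades \cite{G04,Gr68,Se69,Se69a} or reconstructs it via a parametrix in the appropriate $r\sim\sqrt t$ calculus.

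Granting this, substitute $r=\sqrt t\,s$ in $\int_{{\mathcal{C}_\epsilon}}\langle\chi\phi,\bar v\rangle\,dx$. Introduce the regularized pairing $\langle\!\langle\chi\phi,w\rangle\!\rangle:=\int_{{\mathcal{C}_\epsilon}}\{\langle\chi\phi,w\rangle\,dx-\langle\phi_0,w|_{r=0}\rangle r^{-\alpha}\,dy\,dr\}+\int_{\partial M}\langle\phi_0,w|_{r=0}\rangle\,dy\cdot\tfrac{\epsilon^{1-\alpha}}{1-\alpha}$; it is additive in $w$, it satisfies $\langle\!\langle\chi\phi,\tilde D^n\rho\rangle\!\rangle={\mathcal{I}_{\operatorname{Reg}}}(\chi\phi,\tilde D^n\rho)$, and the subtractions cancel in $\langle\!\langle\chi\phi,\bar v^{\mathrm{int}}\rangle\!\rangle+\langle\!\langle\chi\phi,\bar v^{\mathrm{bl}}\rangle\!\rangle$ because $\bar v^{\mathrm{int}}|_{r=0}+\bar v^{\mathrm{bl}}|_{r=0}=\bar v|_{r=0}$, so $\int_{{\mathcal{C}_\epsilon}}\langle\chi\phi,\bar v\rangle\,dx=\sum_{n<N}\frac{(-t)^n}{n!}{\mathcal{I}_{\operatorname{Reg}}}(\chi\phi,\tilde D^n\rho)+\langle\!\langle\chi\phi,\bar v^{\mathrm{bl}}\rangle\!\rangle+(\text{small})$. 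Feeding in $r^\alpha\phi\sim\sum_i\phi_i r^i$, $J\sim\sum_l J_l r^l$, $\bar v^{\mathrm{bl}}\sim\sum_{k'}t^{k'/2}\Psi_{k'}$, rescaling, and collecting like powers of $t^{1/2}$ — the $\epsilon$-- and truncation--dependent pieces cancelling, as they must — yields
\begin{equation*}
\langle\!\langle\chi\phi,\bar v^{\mathrm{bl}}\rangle\!\rangle\sim\sum_{k\ge0}t^{(1+k-\alpha)/2}\int_{\partial M}\Big(\sum_{i+l+k'=k}J_l(y)\,M_{i+l-\alpha}\big[\langle\phi_i,\Psi_{k'}(y,\cdot)\rangle\big]\Big)dy,
\end{equation*}
where $M_\mu[f]=\int_0^\infty s^{\mu}f(s)\,ds$, read as its meromorphic continuation in $\alpha$. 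All these integrals converge for $\operatorname{Re}(\alpha)<1$; in general $M_{i+l-\alpha}[\,\cdot\,]$ has simple poles at $\alpha=i+l+1,i+l+2,\dots$, which for $i+l\ge1$ all lie in $\operatorname{Re}(\alpha)\ge2$, while for $i=l=0$ the only one with $\operatorname{Re}(\alpha)<2$ is the simple pole at $\alpha=1$, of residue $-\langle\phi_0,\Psi_k(y,0)\rangle$. This gives the expansion of part (1); and decomposing $\Psi_{k'}=\sum_{j}(\text{weight-}j\text{ operator})\rho_j$ and reading off the $(\phi_i,\rho_j)$--part of the coefficient gives the bilinear decomposition of part (3), with $i+j\le i+k'\le k-l\le k$, and with holomorphy in $\alpha$ away from $\alpha=1$ (on $\operatorname{Re}(\alpha)<2$) from the pole structure just described.

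Finally, parts (2) and (4): regard the datum as $r^{-\alpha}\psi$ with $\psi$ fixed, keep $\alpha$ a parameter near $1$, and let $\alpha\to1$ in the Dirichlet case, using that $\beta(\phi,\rho,D,B)(t)$ is, for fixed $t>0$, holomorphic in $\alpha$ on $\operatorname{Re}(\alpha)<2$ and hence has no pole at $\alpha=1$. From $\operatorname{Res}_{\alpha=1}{\mathcal{I}_{\operatorname{Reg}}}(\phi,\sigma)=-\int_{\partial M}\langle\phi_0,\sigma_0\rangle\,dy$ and $\operatorname{Res}_{\alpha=1}\int_{\partial M}\beta_{k,\alpha}^{\partial M}\,dy=-\int_{\partial M}\langle\phi_0,\Psi_k(y,0)\rangle\,dy$ — which is $0$ for $k$ odd and $\frac{(-1)^n}{n!}\int_{\partial M}\langle\phi_0,(\tilde D^n\rho)_0\rangle\,dy$ for $k=2n$ — the poles of the volume sum and of the boundary sum cancel in each power of $t$. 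Since $t^{(1+k-\alpha)/2}=t^{k/2}\bigl(1-\tfrac12(\alpha-1)\ln t+\cdots\bigr)$, the finite part of the $k$--term produces a $t^{k/2}$ coefficient — the Laurent constant, which is $\beta_{k,1}^{\partial M}$ together with ${\mathcal{I}_{\operatorname{Reg}}}$ evaluated at $\alpha=1$ — plus a $t^{k/2}\ln t$ coefficient equal to $-\tfrac12\operatorname{Res}_{\alpha=1}\beta_{k,\alpha}^{\partial M}$, i.e. $\int_{\partial M}\check\beta_k^{\partial M}\,dy=-\tfrac12\frac{(-1)^n}{n!}\int_{\partial M}\langle\phi_0,(\tilde D^n\rho)_0\rangle\,dy$ when $k=2n$ and $0$ when $k$ is odd, which is parts (2) and (4). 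What remains are the routine points: that truncating all the series ($N$, the orders of $r^\alpha\phi$, $J$, $\bar v^{\mathrm{bl}}$, and the tail cutoffs) leaves a genuine asymptotic remainder, and that these estimates are uniform for $\alpha$ in a punctured neighbourhood of $1$ so that the limit argument in parts (2) and (4) is legitimate.
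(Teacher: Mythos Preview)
Your outline is sound and would prove the theorem, but it follows a genuinely different route from the paper. The paper works entirely through the \emph{resolvent}: it builds a parametrix $P_N=\sum_j[\Op(c_j)+\Op'(d_j)]$ for $(D_B+\mu^2)^{-1}$ via the Seeley calculus (Sections \ref{sect-2.1}--\ref{sect-2.3}), derives an asymptotic expansion of $\langle(D_B+\mu^2)^{-1}\phi,\rho\rangle$ in powers of $\mu$ (Theorem \ref{thm-2.1}, with an extra $\ln\mu$ when $\alpha=1$), and only then passes to $\beta$ by a keyhole contour integral in $\lambda=-\mu^2$. Part (4) is obtained separately, by the metric rescaling $g_c=c^2g$ in the proof of Lemma \ref{lem-3.1}. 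Your approach---boundary-layer expansion of the dual heat flow together with the rescaling $r=\sqrt t\,s$ and a Mellin/residue analysis in $\alpha$---is more direct and has the virtue of making the pole mechanism at $\alpha=1$ transparent, so that (2) and (4) come out in one stroke from the identities $\Psi_{2n}(y,0)=-\tfrac{(-1)^n}{n!}(\tilde D^n\rho)_0$ and $\Psi_{2n+1}(y,0)=0$. The price is that all the hard analysis is concentrated in the existence and uniform control of the profiles $\Psi_{k'}$, which you correctly flag as the main obstacle; the paper's resolvent route trades this for an off-the-shelf symbol calculus where the estimates (Equations \eqref{eqn-2.e}, \eqref{eqn-2.g}, \eqref{eqn-2.h} and Lemmas \ref{lem-2.2}--\ref{lem-2.6}) are explicit.

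One correction: the heat equation does not have finite propagation speed, so you cannot invoke it to get $v-\bar v=O(t^\infty)$ on $\operatorname{supp}\chi$. The localization must instead come from Gaussian off-diagonal bounds for the heat kernel (or, equivalently, from the fact that the difference of parametrices has a smooth kernel vanishing on $\partial M$, as the paper does for the resolvent remainder in Equation \eqref{eqn-2.p}). With that amendment the argument goes through.
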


We note that $\tilde B_{\mathcal{R}}\rho=\rho_1+\tilde S\rho_0$. Theorem \ref{thm-1.3} generalizes to this setting to become:

\begin{theorem}\label{thm-1.5}
Set $c_\alpha:=2^{1-\alpha}\Gamma\left(\frac{2-\alpha}2\right)\frac1{\sqrt\pi(\alpha-1)}$.
\begin{enumerate}\item If $\alpha\ne1$, if $\operatorname{Re}(\alpha)<2$, and if $B=B_{\mathcal{D}}$, then:
\begin{enumerate}
\item $\int_{\partial M}\beta_{0,\alpha}^{\partial
M}(\phi,\rho,D,{B_{\mathcal{D}}})dy=c_\alpha\int_{\partial M}\langle\phi_0,\rho_0\rangle dy$.
\item $\int_{\partial M}\beta_{1,\alpha}^{\partial
M}(\phi,\rho,D,{B_{\mathcal{D}}})dy=c_{\alpha-1}\int_{\partial M}
     \langle\phi_1-{\textstyle{\frac12}}L_{aa}\phi_0,\rho_0\rangle dy$.
\smallbreak\item $\int_{\partial M}\beta_{2,\alpha}^{\partial M}(\phi,\rho,D,{B_{\mathcal{D}}})dy
   =c_{\alpha-2}\int_{\partial M}\{\langle\phi_2,\rho_0\rangle-\frac12\langle L_{aa}\phi_1,\rho_0\rangle
$\smallbreak\ \ $
    -\frac{\alpha-3}{2(\alpha-1)(\alpha-2)}\langle E\phi_0,\rho_0\rangle
   +\frac2{(\alpha-1)(\alpha-2)}\langle\phi_0,\rho_2\rangle
   -\frac1{(\alpha-1)(\alpha-2)}\langle L_{aa}\phi_0,\rho_1\rangle
$\smallbreak\ \ $+\frac{\alpha-3}{2(\alpha-1)(\alpha-2)}\langle\phi_{0:a},\rho_{0:a}\rangle
    -\frac{\alpha-1}{4(\alpha-2)}\langle\operatorname{Ric}_{mm}\phi_0,\rho_0\rangle$\smallbreak\ \ $
+\frac{\alpha-1}{8(\alpha-2)}\langle L_{aa}L_{bb}\phi_0,\rho_0\rangle
-\frac{\alpha-1}{4(\alpha-2)}
\langle L_{ab}L_{ab}\phi_0,\rho_0\rangle\}dy$.
\end{enumerate}
\smallbreak\item
Let $\gamma$ be Euler's constant. If $\alpha=1$ and if $B=B_{\mathcal{D}}$, then
\medbreak
$\beta(\phi,\rho,D,B_{\mathcal{D}})(t)\sim
\mathcal{I}_{\operatorname{Reg}}(\phi,\rho)-{\textstyle\frac12}\ln(t)\int_{\partial M}\langle\phi_0,\rho_0\rangle dy
+\int_{\partial M}{\textstyle\frac\gamma2}\langle\phi_0,\rho_0\rangle dy$
\smallbreak\qquad$+t^{1/2}\int_{\partial M}\{
-{\textstyle\frac2{\sqrt\pi}}\langle\phi_1,\rho_0\rangle +{\textstyle\frac1{\sqrt\pi}}\langle
L_{aa}\phi_0,\rho_0\rangle\}dy$
\smallbreak\qquad$+t\{-\mathcal{I}_{\operatorname{Reg}}(\phi,\tilde D\rho)+{\textstyle\frac12}\ln(t)\int_{\partial M}
\langle\phi_0,(\tilde D\rho)_0\rangle dy\}$
\smallbreak\qquad$+t\int_{\partial M}\{\frac\gamma2\langle\phi_0,-(\tilde D\rho)_0\rangle
-\langle\phi_2,\rho_0\rangle
+{\textstyle\frac12}\langle L_{aa}\phi_1,\rho_0\rangle
+\langle\phi_0,\rho_2\rangle$\smallbreak\qquad\qquad$
-\frac12\langle L_{aa}\phi_0,\rho_1\rangle\}dy$
$+O(t^{3/2})$.
\item If $\alpha\ne0$, if $\operatorname(Re)(\alpha)<1$, and if $B=B_{\mathcal{R}}$, then:
\begin{enumerate}
\smallbreak\item $\int_{\partial M}\beta_{0,\alpha}^{\partial
M}(\phi,\rho,D,{B_{\mathcal{R}}})dy=0$. \smallbreak\item
$\int_{\partial M}\beta_{1,\alpha}^{\partial
M}(\phi,\rho,D,{B_{\mathcal{R}}})dy=\frac{2\alpha}{2-\alpha}c_{\alpha+1}\int_{\partial
M}\langle\phi_0,\tilde B_{\mathcal{R}}\rho\rangle dy$.
\smallbreak\item $\int_{\partial M}\beta_{2,\alpha}^{\partial
M}(\phi,\rho,D,{B_{\mathcal{R}}})dy
$\smallbreak\quad$=\frac{-2}{3-\alpha}c_\alpha\int_{\partial
M}\langle(1-\alpha) \phi_1+S\phi_0-\frac\alpha2L_{aa}\phi_0,\tilde
B_{\mathcal{R}}\rho\rangle dy$.
\end{enumerate}\end{enumerate}
\end{theorem}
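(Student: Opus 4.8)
The plan is to reduce the global statement to a computation on the model half‑space and then match it against the abstract structural result of Theorem \ref{thm-1.4}. First I would observe that, by Theorem \ref{thm-1.4}(3), the coefficient $\int_{\partial M}\beta_{k,\alpha}^{\partial M}dy$ is built from tangential bilinear differential operators applied to the Taylor coefficients $\phi_i,\rho_j$ with $i+j\le k$; by invariance theory (homogeneity in the jets of $\phi$, $\rho$, $L$, $E$, $\mathrm{Ric}$ under the rescaling $g\mapsto c^2g$) one knows the list of monomials that can occur, and the task is only to pin down their $\alpha$‑dependent coefficients. Each such coefficient is universal, so it suffices to evaluate it in the simplest geometry producing that monomial — typically the flat half‑space $\mathbb{R}^{m-1}\times[0,\infty)$ with $D=\Delta$ (plus a constant potential $E$, a flat second fundamental form, etc., introduced one at a time). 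This is the standard ``method of universal coefficients'' used throughout \cite{G04}.

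Next I would carry out the model computation. On the half‑line with Dirichlet (resp. Robin) data the heat kernel is explicit: $p(x,\tilde x;t)$ is the difference (resp. sum) of two Gaussians, and $u_B(x;t)=\int_0^\infty p(x,\tilde r;t)\,\tilde r^{\,i-\alpha}\,d\tilde r$ can be written in terms of parabolic cylinder / incomplete gamma functions. Pairing against $\rho_j(y)\,r^{\,j}$ and integrating in $r$ produces, after scaling $r\mapsto\sqrt t\,r$, a power $t^{(1+k-\alpha)/2}$ times an explicit one‑dimensional integral; evaluating that integral is what generates the constants $c_\alpha=2^{1-\alpha}\Gamma\!\big(\tfrac{2-\alpha}2\big)\tfrac1{\sqrt\pi(\alpha-1)}$ and the rational factors in $\alpha$ appearing in (1a)–(1c) and (3a)–(3c). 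The normal derivatives $\phi_{;mm}$, and the curvature and $E$ terms, enter by perturbing the model (a constant $E$ shifts $\Delta\mapsto\Delta-E$, so $p$ acquires a factor $e^{tE}$ whose $t$‑expansion feeds into $\beta_2$; a constant $L_{aa}$ deforms the volume element $dx$ by $(1-rL_{aa}+\cdots)$, etc.), each perturbation contributing at a definite order that I would track with the homogeneity bookkeeping above. The tangential Laplacian term $\langle\phi_{0:a},\rho_{0:a}\rangle$ in (1c) comes from expanding the transverse part of $D$, i.e. the $\phi_{:aa}$ piece in \eqref{eqn-1.c}, and integrating by parts on $\partial M$.

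For the Robin assertions (3a)–(3c) I would use the extra cancellation noted just before Theorem \ref{thm-1.5}: since $\tilde B_{\mathcal R}\rho=\rho_1+\tilde S\rho_0$ and the Robin boundary condition forces the transverse Neumann‑type reflection, the leading term vanishes ($\beta_{0,\alpha}^{\partial M}=0$) and the surviving contributions organize themselves into pairings of $\phi$‑data against $\tilde B_{\mathcal R}\rho$, exactly as in the smooth case of Theorem \ref{thm-1.3}(2); the coefficients $\tfrac{2\alpha}{2-\alpha}c_{\alpha+1}$ and $\tfrac{-2}{3-\alpha}c_\alpha$ again come out of the explicit half‑line integrals. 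Finally, part (2), the $\alpha=1$ case, is obtained not by a separate computation but by analytic continuation: the formulas in (1a)–(1c) are meromorphic in $\alpha$ with a simple pole at $\alpha=1$ coming from the factor $(\alpha-1)^{-1}$ in $c_\alpha$, and $c_{\alpha-1}$, $c_{\alpha-2}$ are regular there; combining the pole of $\mathcal I_{\mathrm{Reg}}(\phi,\tilde D^n\rho)$ (which, by the construction in \eqref{eqn-1.e}, has residue $-\int_{\partial M}\langle\phi_0,(\tilde D^n\rho)_0\rangle dy$) with the pole of the $t^{(1+k-\alpha)/2}$ term and expanding $t^{-\varepsilon/2}=1-\tfrac\varepsilon2\ln t+\cdots$ produces the $\ln t$ terms, with $\check\beta_k^{\partial M}$ as in Theorem \ref{thm-1.4}(4) and the Euler‑constant term arising from $\Gamma'(1)=-\gamma$ in the Laurent expansion of $c_\alpha$ near $\alpha=1$. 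The main obstacle I anticipate is bookkeeping: correctly assembling the finitely many half‑line integrals, keeping the normal‑jet versus tangential‑jet contributions and the non‑product volume element straight, and checking that the residues match so that the $\alpha\to1$ limit of (1) reproduces (2) and is consistent with Theorem \ref{thm-1.4}(1)–(2) — the analysis (existence of the expansion, holomorphy in $\alpha$) is already in hand, so no new hard estimate is needed.
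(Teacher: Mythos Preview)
Your overall framework---invariance theory to list the possible monomials, then evaluation of universal coefficients on model geometries, with the $\alpha=1$ case by analytic continuation---is exactly the paper's. But the execution you propose diverges from what the paper actually does, and your version has a gap in the curvature terms. The paper computes \emph{only one} constant directly from a half-line integral, namely $\varepsilon_\alpha^0=c_\alpha$ (Lemma~\ref{lem-3.3}, via the calculation in \S\ref{sect-2.1}). All remaining Dirichlet constants are obtained by \emph{functorial relations}: product formulas $M=M_1\times M_2$ (Lemma~\ref{lem-3.2}), index shifting $\phi=\chi(r)\Phi(y)r^{i_0-\gamma}$ which moves $\alpha\mapsto\alpha-i_0$ (Lemma~\ref{lem-3.4}), the relation $\partial_t\beta=-\beta(\phi,\tilde D\rho)$ when $\rho_0=0$ (Lemma~\ref{lem-3.6}), and---crucially for the curvature invariants---a warped-product computation on $\mathbb{T}^{m-1}\times[0,1]$ with metric $\sum_a e^{2f_a(r)}dy_a^2+dr^2$ (Lemma~\ref{lem-3.5}), chosen so that the heat content factorizes and hence $\beta_{k,\alpha}^{\partial M}=0$ for $k>0$, forcing linear relations among the $\varepsilon_\alpha^i$.

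Your proposal to get each term by ``introducing one at a time'' a constant $E$, a constant $L_{aa}$, etc., on the flat half-space does not separate the invariants $\operatorname{Ric}_{mm}$, $L_{aa}L_{bb}$, $L_{ab}L_{ab}$: on a flat half-space all three vanish, and any geometry that makes one nonzero makes the others nonzero too. You need a family of test metrics rich enough to produce independent linear relations; the paper's warped product is precisely such a device. For the Robin case the paper again avoids your direct integrals: it uses the intertwining $Ae^{-tD_{1,B_{\mathcal R}}}=e^{-tD_{2,B_{\mathcal D}}}A$ with $D_1=A^*A$, $D_2=AA^*$ (Lemma~\ref{lem-4.3}) to reduce Robin coefficients to the already-known Dirichlet ones, which is where the factors $\frac{2\alpha}{2-\alpha}c_{\alpha+1}$ and $\frac{-2}{3-\alpha}c_\alpha$ come from. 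Your explicit Robin heat-kernel route would work in principle but involves an $S$-dependent kernel (not just a sum of Gaussians) and is considerably messier than the intertwining trick.
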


\begin{remark}\label{rmk-1.6}
\rm We note that setting $\alpha=0$ in Assertion (1) of Theorem
\ref{thm-1.5} yields Assertion (1) of Theorem \ref{thm-1.3} and
that taking the limit as $\alpha\rightarrow0$ in Assertion (3) of
Theorem \ref{thm-1.5} yields Assertion (2) of Theorem
\ref{thm-1.3}.
\end{remark}
To avoid subscripts on subscripts, we shall for the most part simply refer to $u_B$, $D_B$, and $p_B$ when no danger of confusion
is likely to ensue; however, we shall use the notation $D_{B_\mathcal{D}}$ and $D_{B_\mathcal{R}}$ in Section $\ref{sect-4}$ when we must
deal with two different boundary conditions.

\subsection{Outline of the paper}\label{sect-1.5}
In Section
\ref{sect-2}, we use the calculus of
pseudo-differential operators to establish Theorem
\ref{thm-1.4} (1-3); we postpone the proof of (4) as it will follow as a scholium to the proof of Lemma \ref{lem-3.1};
it can also be deduced directly from Lemma \ref{lem-2.6}. We shall restrict to Dirichlet boundary conditions as the analysis is similar
for Robin boundary conditions. In Section
\ref{sect-3}, we apply invariance theory and the functorial method to prove Assertions (1) and (2) of Theorem \ref{thm-1.5}; an essential
input is the calculation of a single coefficient performed in \cite{Be07} and a special case computation on the half-line.  In Section
\ref{sect-4}, we establish Assertion (3) of Theorem \ref{thm-1.5}. We plan in a
subsequent paper \cite{BeGiKiSe08} to undertake a similar analysis of the heat trace asymptotics with a singular smearing function.

The special case $\alpha = 1$ in Theorem \ref{thm-1.5} can be
derived from the case $\alpha\ne1$ by a straightforward but
tedious analytic continuation; the poles at $\alpha =1$ which
arise from the terms in Assertion 1 of Theorem \ref{thm-1.5} are
cancelled by those involved in the regularization
${\mathcal{I}_{\operatorname{Reg}}}(\phi,\tilde D^n\rho)$. The
derivation of the $t^0$ and $t^0\ln(t)$ terms in this way is
sketched in Section \ref{sect-2.1}; the other terms use Equations
(\ref{eqn-1.b}), (\ref{eqn-1.c}), and (\ref{eqn-3.b}) below. We
will give an independent derivation of the case $\alpha = 1$ to
show how  scaling arguments can be applied when logarithmic terms
are present.

\section{Pseudo-Differential Operators}\label{sect-2}
We suppose throughout Section \ref{sect-2} that $\operatorname{Re}(\alpha)<2$ and that $B=B_{\mathcal{D}}$. We use the identity
$$\int_M\langle e^{-tD_B}\phi,\rho\rangle dx=\int_M\langle\phi,e^{-t\tilde D_B}\rho\rangle
dx$$ to interchange the roles of $\phi$ and $\rho$. {This causes no difficulty as
$$\beta(\phi,\rho,D,B_{\mathcal{D}})(t)=\beta(\rho,\phi,\tilde D,B_{\mathcal{D}})(t)\,.$$}
We shall assume
throughout this section that $\phi$ is smooth and that
$$
\rho(y,r)=\rho_0(y){\chi(r)}r^{-\alpha}
  \quad\text{on}\quad{\mathcal{C}_\epsilon}\,.
$$
The more general case where $\rho\sim(\rho_0(y)+\rho_1(y)r+...)\chi(r)r^{-\alpha}$ then follows similarly; if $\phi$ and $\rho$
vanish to high order on the boundary, the corresponding boundary contributions vanish to high order in
$t$. Frequently in this section,
we will let $\beta$ be a multi-index rather than denoting the total heat content; we shall also let $\gamma$ both be a multi-index and
Euler's constant. We apologize in advance for any confusion this may cause. We let ${\mathcal{I}_{\operatorname{Reg}}}$ be the
regularization defined in Equation (\ref{eqn-1.e}) where we interchange the roles of $\phi$ and $\rho$.

Our fundamental analytical result is the following:

\begin{theorem}\label{thm-2.1}
Adopt the notation established above. There are differential operators $\mathbb{G}_k$ of order at most $k$
defined on
$M$ and tangential differential operators
${\mathbb{B}_k(\alpha)}$ and
$\mathbb{L}_k$ of order at most $k$ defined on $\partial M$ such that in closed subsectors of
$\{\mu\in\mathbb{C}:|\operatorname{arg}(\mu)|<\frac12\pi\}$ one has:
\begin{enumerate}\item Let $\alpha\ne1$ and let $\operatorname{Re}(\alpha)<2$. Then
$\mathbb{B}_0(\alpha)=-\Gamma(1-\alpha)$, $\mathbb{B}_k(\alpha)$ is holomorphic in $\alpha$,  and
$$
\langle(D_B+\mu^2)^{-1}\phi,\rho\rangle\sim\sum_{k=0}^\infty\mu^{-2-k}
{\mathcal{I}_{\operatorname{Reg}}}(\mathbb{G}_k\phi,\rho)
+\sum_{k=0}^\infty\mu^{\alpha-3-k}\int_{\partial
M}\langle\mathbb{B}_k(\alpha)\phi,\rho_0\rangle dy \,.
$$
\item If $\alpha=1$, then:
\begin{eqnarray*}
\langle(D_B+\mu^2)^{-1}\phi,\rho\rangle
&\sim&\sum_{k=0}^\infty\mu^{-2-k}\left\{{\mathcal{I}_{\operatorname{Reg}}}(
\mathbb{G}_k\phi,\rho) +\int_{\partial
M}\langle\mathbb{B}_k(1)\phi,\rho_0\rangle
dy\right.\\&&\left.\qquad+ \ln(\mu)\int_{\partial
M}\langle\mathbb{L}_k\phi,\rho_0\rangle dy\right\} \,.
\end{eqnarray*}
\end{enumerate}\end{theorem}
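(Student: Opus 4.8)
The plan is to analyze $\langle(D_B+\mu^2)^{-1}\phi,\rho\rangle$ by constructing a parametrix for the resolvent $(D_B+\mu^2)^{-1}$ within the calculus of parameter-dependent pseudodifferential operators of \cite{Gr68,Se69,Se69a}, and then pairing this parametrix against the singular section $\rho = \rho_0(y)\chi(r)r^{-\alpha}$. Since $\phi$ is smooth while $\rho$ carries all the singularity, I would use the adjoint identity $\langle(D_B+\mu^2)^{-1}\phi,\rho\rangle = \langle\phi,(\tilde D_B+\mu^2)^{-1}\rho\rangle$ and instead study $v:=(\tilde D_B+\mu^2)^{-1}\rho$, which solves $(\tilde D_B+\mu^2)v=\rho$. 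Away from $\partial M$ the standard interior symbol calculus gives $v\sim\sum_k\mu^{-2-k}G_k^\ast\rho$ with $G_0^\ast\rho=\rho$, $G_k^\ast$ differential of order $k$ — these are the terms that, after pairing with $\phi$, will produce $\sum_k\mu^{-2-k}\mathcal{I}_{\operatorname{Reg}}(\mathbb{G}_k\phi,\rho)$ once the interior integral of $\langle\phi,G_k^\ast\rho\rangle$ against the singular factor $r^{-\alpha}$ is interpreted via the regularization $\mathcal{I}_{\operatorname{Reg}}$ (which is exactly the analytic continuation in $\alpha$ of that divergent integral). The boundary layer — the part of the parametrix that repairs the Dirichlet condition — contributes the second sum.

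The heart of the matter is the boundary-layer analysis. In the collar $\mathcal{C}_\epsilon$ with normal coordinate $r$, one freezes tangential coordinates and the symbol of $\tilde D_B$ near the boundary and is led to model ODE problems on the half-line $[0,\infty)$ of the form $(-\partial_r^2 + q(y,\xi,\mu))w = \rho_0(y)r^{-\alpha}$ with $w|_{r=0}=0$, where $q$ incorporates $|\xi|^2_g + \mu^2$ plus lower-order curvature/connection terms arising from Lemma \ref{lem-1.1}, Equation (\ref{eqn-1.c}), and the second fundamental form $L_{ab}$. The exact solution of the model problem involves the incomplete-Gamma-type integral $\int_0^\infty e^{-\kappa|r-s|}s^{-\alpha}ds$ and its homogeneous-solution correction; its behavior as one integrates in $\xi$ and reads off the $\mu$-expansion produces the fractional powers $\mu^{\alpha-3-k}$ and, crucially, the leading coefficient $\mathbb{B}_0(\alpha)=-\Gamma(1-\alpha)$, which comes from $\int_0^\infty e^{-s}s^{-\alpha}ds=\Gamma(1-\alpha)$ after the appropriate rescaling $s\mapsto s/\kappa$ and the sign from the Dirichlet reflection. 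The operators $\mathbb{B}_k(\alpha)$ are then tangential differential operators of order $\le k$ acting on $\phi$ (after transferring back via the adjoint pairing), with coefficients that are rational in $\alpha$ with possible poles only where $\Gamma(1-\alpha)$ or the half-line integrals blow up; holomorphy for $\alpha\ne 1$ is checked by inspecting these explicit expressions, the pole at $\alpha=1$ being simple. For $\alpha=1$ one takes the Laurent expansion: the simple poles in $\alpha$ of the individual terms cancel against the simple pole of $\mathcal{I}_{\operatorname{Reg}}$ at $\alpha=1$ (as noted in Section \ref{sect-1.5}), and the derivative $\partial_\alpha$ of the $\mu^{\alpha-3-k}$ factor at $\alpha=1$ generates the $\ln(\mu)$ terms with coefficients $\mathbb{L}_k$; this is where the second assertion comes from.

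The main obstacle I expect is making the boundary-layer parametrix construction rigorous \emph{uniformly in $\alpha$ and in closed subsectors of $|\arg\mu|<\tfrac12\pi$}, with honest remainder estimates showing the series is genuinely asymptotic. Two technical points need care: (i) the source $\rho_0(y)\chi(r)r^{-\alpha}$ is not a classical symbol-order-zero datum, so the mapping properties of the parametrix must be tracked on weighted spaces that accommodate $r^{-\alpha}$ with $\operatorname{Re}(\alpha)<2$, and the condition $\operatorname{Re}(\alpha)<2$ is precisely what guarantees $(\tilde D_B+\mu^2)^{-1}\rho$ is still well-defined and the interior integral against $\phi$ has an analytic continuation; (ii) the Riemannian measure is not a product near $\partial M$, so the $O(r^{1-\alpha})$ corrections in $\langle\phi,\rho\rangle dx$ feed into the subleading $\mathbb{G}_k$ and $\mathbb{B}_k$ and must be bookkept consistently with the definition of $\mathcal{I}_{\operatorname{Reg}}$ in Equation (\ref{eqn-1.e}). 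Once the uniform estimates are in place, everything else — the identification of $\mathbb{B}_0(\alpha)$, the holomorphy claims, and the $\alpha=1$ logarithmic structure — follows from the explicit half-line computation and Watson's lemma applied sector-wise.
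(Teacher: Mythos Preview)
Your overall plan is sound and close in spirit to the paper's, but there is one methodological difference worth flagging. You propose to transfer via the adjoint and study $v=(\tilde D_B+\mu^2)^{-1}\rho$, i.e.\ to apply the resolvent to the \emph{singular} datum $\rho_0(y)\chi(r)r^{-\alpha}$. The paper instead keeps the parametrix acting on the \emph{smooth} function $\phi$: one writes $(D_B+\mu^2)^{-1}\phi\approx P_N\phi=\sum_j[\Op(c_j)+\Op'(d_j)]\phi$ in the Seeley calculus, expands $P_N\phi$ via Taylor expansions of $\phi$ (Lemmas \ref{lem-2.2}--\ref{lem-2.3}), and only \emph{then} integrates the resulting smooth expressions against $\rho_0(y)r^{-\alpha}\,dr$ (Lemmas \ref{lem-2.4}--\ref{lem-2.5}). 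This sidesteps exactly the weighted-space difficulty you flag in (i): the parametrix never acts on $r^{-\alpha}$, and the singularity enters only through one-dimensional integrals $\int_0^\infty f(r)\,r^{-\alpha}\,dr$ whose analytic continuation in $\alpha$ is elementary. Your route is workable, but it obliges you to establish mapping properties of the boundary calculus on $r^{-\alpha}$-weighted data, which is genuine extra labor the paper avoids.

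Two further points of comparison. First, the paper's mechanism for the $\alpha=1$ logarithm is more concrete than a generic Laurent argument: the key is the Dirichlet boundary identity $[A_{j\beta}+B_{j\beta}+C_{j\beta}](y,0,\mu)=0$ (Equation (\ref{eqn-2.j})), which forces the would-be singular contributions at $\alpha=1$ to combine into $c_{j\beta}(y,0,0,0,\mu')\cdot(1-|\mu|^{\alpha-1})/(1-\alpha)\to c_{j\beta}\ln|\mu|$ (Equation (\ref{eqn-2.o}) and Lemma \ref{lem-2.6}). Second, the paper needs a short holomorphy argument (Lemma \ref{lem-2.7}, via Morera's theorem) to upgrade the expansion from powers of $|\mu|$ with coefficients depending on $\mu/|\mu|$ to honest powers of $\mu$ in closed subsectors; your appeal to ``Watson's lemma applied sector-wise'' does not quite address this, since the symbol calculus naturally produces terms of the form $|\mu|^{j}g(\mu/|\mu|)$ rather than $\mu^{j}$ directly.
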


Here is a brief outline of the proof. In Section \ref{sect-2.1},
we study the half-line $\mathbb{R}^1_+$. In Section
\ref{sect-2.2}, we study the half-space $\mathbb{R}^m_+$. In
Section \ref{sect-2.3}, we complete the proof by considering the
case of manifolds. Theorem \ref{thm-1.4} then follows. The
interior integrals are evaluated using previous techniques.
Furthermore, for $\alpha\ne1$, one has
$$\beta_{k,\alpha}^{\partial M}(\phi,\rho,D,B_{\mathcal{D}})=\Gamma
{\textstyle\left(\frac{3-\alpha+k}2\right)}^{-1}
{\langle\mathbb{B}_k(\alpha)\phi,\rho_0\rangle}\,.$$
In particular, we may use the duplication formula for the Gamma function to establish Assertion (1a) in Theorem
\ref{thm-1.5} by computing:
\begin{eqnarray*}
&&\textstyle\beta_{0,\alpha}^{\partial M}(\phi,\rho,D,B_{\mathcal{D}})
=-\frac{\Gamma(1-\alpha)}{\Gamma\left(\frac{3-\alpha}2\right)}\langle\phi_0,\rho_0\rangle
=\textstyle2^{1-\alpha}\Gamma\left(\frac{2-\alpha}2\right)
  \frac1{\sqrt\pi(\alpha-1)}\langle\phi_0,\rho_0\rangle\,.
\end{eqnarray*}

\subsection{The half line}\label{sect-2.1}
{The case of $\mathbb{R}^1_+$ gives the basic outline of the proof in the general case}. Let $\eta$ be the Fourier
transform variable related to $r$. The Fourier inversion formula then becomes:
$$\hat\phi(\eta)=\int_{0}^\infty e^{-\sqrt{-1}r\eta}\phi(r)dr\quad\text{and}\quad
\phi(r)=\int_{-\infty}^\infty e^{\sqrt{-1}r\eta}\hat\phi(\eta)\bar d\eta$$
where we set $\bar d\eta:=d\eta/2\pi$.
For
$D=-\partial_r^2$, we may then write the {Dirichlet resolvent} as a pseudo-differential operator in the form:
\begin{eqnarray*}
(D_B+\mu^2)^{-1}\phi(r)&=&\int_{-\infty}^\infty e^{\sqrt{-1}r\eta}c_{-2}(\eta,\mu)\hat\phi(\eta)\bar d\eta
+\int_{-\infty}^\infty d_{-2}(r,\eta,\mu)\hat\phi(\eta)\bar d\eta\nonumber\\
&=:&[\Op(c_{-2})+\Op^\prime(d_{-2})]\phi(r)\,.
\end{eqnarray*}
Here  $c_{-2}=(\eta^2+\mu^2)^{-1}$, and
$d_{-2}(r,\eta,\mu)=-(\eta^2+\mu^2)^{-1}e^{-\mu r}$ is the bounded solution of
$$
(-\partial_r^2+\mu^2)d_{-2}=0\quad\text{and}\quad
d_{-2}|_{r=0}=-c_{-2}(\eta,\mu)\,.
$$
{The kernel $k(r,s,\mu)$ of $(\Delta_B+\mu^2)^{-1}$ is thus, for
$r>0$ and $s>0$, given by}
$$k(r,s,\mu)=\frac1{2\mu}\left[e^{-|r-s|\mu}-e^{-(r+s)\mu}\right]
\,.$$

We now consider the heat conduction problem:
$$\phi(r)\equiv1\quad\text{and}\quad\rho(r)=\chi(r)r^{-\alpha},\quad\text{where}\quad\chi\quad\text{is
smooth and}\quad $$
$$ \chi(r)=\left\{\begin{array}{ll}
1,&0<r<1/3,\\
0, &2/3<r.\end{array}\right.$$

Suppose that
$\operatorname{Re}(\alpha)<1$. One then has that:
\begin{eqnarray}
&&\mu^2\langle(D_B+\mu^2)^{-1}\phi,\rho\rangle
 = \mu^2\int_0^\infty\chi(r)r^{-\alpha}\int_0^\infty k(r,s,\mu)dsdr\label{eqn-2.a}\\
&=&\int_0^\infty\chi(r)r^{-\alpha}(1-e^{-r\mu})dr\nonumber\\
&=&\int_0^1r^{-\alpha}dr + \int_0^1(\chi(r) - 1)r^{-\alpha}dr
-\int_0^\infty r^{-\alpha}e^{-r\mu}dr\nonumber
\\&-&\int_{\frac13}^\infty(\chi(r)-1)e^{-r\mu}r^{-\alpha}dr\nonumber \\
&=& \left[{\frac1{(1-\alpha)}} + \int_0^1(\chi(r) -
1)r^{-\alpha}dr\right] - \mu^{\alpha-1}\Gamma(1-\alpha) +
O(\mu^{-\infty})\,.\nonumber
\end{eqnarray}
The term in bracket gives
$\mathcal{I}_{\operatorname{Reg}}(\phi,\rho)$ for all $\alpha \neq
1$, hence
\begin{equation}\label{eqn-2.b}
\mu^2\langle(D_B + \mu^2)^{-1}\phi,\rho\rangle =
\mathcal{I}_{\operatorname{Reg}}(\phi,\rho) - \mu^{\alpha - 1}
\Gamma(1-\alpha) +O(\mu^{-\infty}), \alpha \neq 1\end{equation} As
$\alpha\rightarrow 1$ we note that $\Gamma(1-\alpha) =
\frac1{1-\alpha} - \gamma + ...$ where $\gamma$ is Euler's
constant. Consequently
$$1/(1-\alpha) - \mu^{\alpha-1}\Gamma(1-\alpha) \rightarrow \ln(\mu) + \gamma\,.$$ So as
$\alpha\rightarrow1$, Equation (\ref{eqn-2.a}) gives
\begin{equation}\label{eqn-2.c}
\mu^2\langle(D_B +\mu^2)^{-1}\phi,\rho\rangle = \ln(\mu) + \gamma
+ \mathcal{I}_{\operatorname{Reg}}(\phi,\rho) +
O(\mu^{-\infty}),\quad \alpha = 1.
\end{equation}

We pass to the heat content by a contour integral
$$\beta(\phi,\rho,D,B_{\mathcal{D}})(t)=
\frac1{2\pi\sqrt{-1}}\int_Ce^{{t\lambda}}\langle(D_B+\lambda)^{-1}\phi,\rho\rangle
d\lambda,$$ where $C$ is a ``keyhole contour'' consisting of two
rays $\{re^{\pm\sqrt{-1}(\pi-\epsilon)},r\ge R\}$ and a circular
arc $\{Re^{\sqrt{-1}\theta},|\theta|\le\pi-\epsilon\}$. We then
get
\begin{eqnarray}\label{eqn-2.d1}
\qquad\beta(\phi,\rho,-\partial_r^2,B_D)(t)=\left\{\begin{array}{ll}
\mathcal{I}_{\operatorname{Reg}}(\phi,\rho)  -\frac{
\Gamma(1-\alpha)}{\Gamma(\frac12(3-\alpha))} + O(t^\infty),
& (\alpha\ne 1)\\
\mathcal{I}_{\operatorname{Reg}}(\phi,\rho)
-{\textstyle\frac12}\ln(t) + {\textstyle\frac12}\gamma +
O(t^\infty),& (\alpha = 1)\end{array}\right.
\end{eqnarray}
This last formula is valid also for the same functions $\phi$ and
$\rho$ on the interval $[0,1]$ with Dirichlet conditions at both
ends; since $\rho\equiv0$ near $r = 1$, the boundary correction
from $r=1$ is $O(t^\infty)$. The constant appearing in
\eqref{eqn-2.d1} for $\alpha\neq 1$ jibes with the constant
$c_{\alpha}$ defined in Theorem \ref{thm-1.5} by Legendre's
duplication formula. Thus, for this special case, we have
established Assertions (1) and (2) of Theorem \ref{thm-1.5}.
\subsection{The half-space $\mathbb{R}^m_+$}\label{sect-2.2}
We use the resolvent construction described in \cite{Se69a}. Let $\xi=(\xi_1,...,\xi_{m-1})$ be the Fourier
transform variables dual to $y=(y_1,...,y_{m-1})$. Note that
$g(\partial_{y_i},\partial_r)=0$ and
$g(\partial_r,\partial_r)=1$. We set
$$q^2(x,\xi):=g(\xi_ady^a,\xi_bdy^b)=g^{ab}(x)\xi_a\xi_b\,.$$
We adopt the notation of Equation (\ref{eqn-1.a}). The symbol of $D$ is given by
$$\begin{array}{lll}
\sigma(D)=\sigma_2+\sigma_1+\sigma_0,&\text{where}&
\sigma_2=\left\{q^2(x,\xi)+\eta^2\right\}\operatorname{id},\\
\sigma_1=-\sqrt{-1}\{A_1^a\xi_a+A_1^m\eta\},&\text{and}&
\sigma_0=-A_0\,.\vphantom{\vrule height 12pt}
\end{array}$$

The resolvent parametrix is the sum of an interior part and a boundary correction. Let
$y\cdot\xi=y_1\xi_1+...+y_{m-1}\xi_{m-1}$. The interior part of the parametrix is a finite sum
$\Op(c_{-2})+...+\Op(c_{-N})$ where
\begin{equation}\label{eqn-2.d}
[\Op(c)\phi](y,r)=\int\int e^{\sqrt{-1}(y\cdot\xi+r\eta)}c(y,r,\xi,\eta)\hat\phi(\xi,\eta)\bar d\eta\bar
d\xi\,.
\end{equation}
The leading term in the interior of $M$ is
$$c_{-2}(y,r,\xi,\eta,\mu)=(\sigma_2+\mu^2)^{-1}$$
and successive terms $c_{-3},...$ are defined by the usual pseudo-differential calculus with parameter
$\mu$; see, for example, Lemma 1.7.2 \cite{G95}. In particular, $c_{j}$ is
homogeneous of degree $j$ in the variables $(\xi,\eta,\mu)$ and, for
$|\operatorname{arg}(\mu)|\le\frac12\pi-\epsilon$, we have the estimate:
\begin{equation}\label{eqn-2.e}
|\partial_{(y,r)}^\beta\partial_{(\xi,\eta)}^\gamma c_{j}|\le
\text{const}_{\beta,\gamma,\epsilon,j}|(|\xi|+|\eta|+|\mu|)^{j-|\gamma|}
\,.
\end{equation}

The boundary part of the parametrix is a finite sum of operators
$$\Op^\prime(d_j)={\int_{\mathbb{R}}}\int_{\mathbb{R}^{m-1}}e^{\sqrt{-1}y\cdot\xi}
d_j(y,r,\xi,\eta,\mu)\hat\phi(\xi,\eta)\bar d\xi\bar d\eta
$$
which are chosen so that
\begin{enumerate}
\item $D\{\Op^\prime(d_{-2})+...+\Op^\prime(d_{-N})\}$ has order $1-N$,
\item $[\Op(c_j)+\Op^\prime(d_j)]\phi(y,0)=0$.
\end{enumerate}
To ensure that this second condition is satisfied, we set:
\begin{equation}\label{eqn-2.f}
d_j(y,0,\xi,\eta,\mu)=-c_j(y,0,\xi,\eta,\mu)\,.
\end{equation}
To ensure the first condition is satisfied, we begin by setting
$$[-\partial_r^2+q^2+\mu^2]d_{-2}=0\,.$$
Then equation (\ref{eqn-2.f}) yields
$$
d_{-2}(y,r,\xi,\eta,\mu)=-(q^2+\eta^2+\mu^1)^{-1}e^{-r\sqrt{q^2+\mu^2}}\,.
$$
We may then define $d_j$ inductively by an equation of the form:
$${(-\partial_r^2+q^2+\mu^2)d_{j-1}=\sum_{-k=2}^{-j}a_{jklu\beta\gamma}(y)r^\ell\partial_r^u
\xi^\beta\partial_y^\gamma d_k},
$$
where $k-\ell+{u}+|\beta|=j+1$. The coefficients $a_{jklu\beta\gamma}(y)$ come from the Taylor {expansion} of
the coefficients of $D$ in powers of $r$, see \cite{Se69a} for details. For some constant $C$, we have:
\begin{eqnarray}
&&d_j(y,r/t,t\xi,t\eta,t\mu)=t^jd_j(y,r,\xi,\eta,\mu),\label{eqn-2.g}\\
&&r^k\partial_\eta^u\partial_\xi^\beta\partial_r^\ell\partial_y^\gamma d_j
=O(|\xi|+|\eta|+|\mu|)^{-2-u}(|\xi|+|\mu|)^{j-k-|\beta|+\ell+2}
e^{-Cr(|\xi|+|\mu|)}\,.\label{eqn-2.h}
\end{eqnarray}

Now consider the expansion of $[\Op(c_j)\phi](y,r)$ for $\phi\in\mathcal{S}(\mathbb{R}^m_+)$, i.e. assume
that
$\phi$ has an extension in the Schwarz class $\mathcal{S}(\mathbb{R}^m)$. Set

$$\phi^{(\beta)}:=\partial_{(y,r)}^\beta\phi(y,r)$$ and
$$c_{j\beta}(y,r,\xi,\eta,\mu):=(-\sqrt{-1})^{|\beta|}\partial_{(\xi,\eta)}^\beta
c_j(y,r,\xi,\eta,\mu)\,.
$$
\begin{lemma}\label{lem-2.2}
As $\mu\rightarrow\infty$ in closed subsectors of $\{\mu\in\mathbb{C}:|\operatorname{arg}(\mu)|<{\frac12\pi}\}$,
\begin{eqnarray*}
&&[\Op(c_j)\phi](y,r)\sim\sum_\beta\frac{1}{\beta!}[C_{j\beta}+A_{j\beta}]\phi^{(\beta)}(y,r),
\quad\text{with}\\
&&C_{j\beta}(y,r,\mu)=c_{j\beta}(y,r,0,0,\mu),\quad\text{and}\\&&
A_{j\beta}(y,r,\mu)=-\int_{-\infty}^{-r}\int_{-\infty}^\infty e^{-\sqrt{-1}s\zeta}
c_{j\beta}(y,r,0,\zeta,\mu)\bar d\zeta ds\,.
\end{eqnarray*}
\end{lemma}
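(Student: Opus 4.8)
The plan is to expand the symbol $c_j(y,r,\xi,\eta,\mu)$ in its Taylor series in the cotangent variables $(\xi,\eta)$ about the origin, and then to carry out the resulting $\eta$-integral explicitly while leaving the $\xi$-integral to produce the tangential derivatives $\phi^{(\beta)}$. More precisely, write $\beta=(\beta',\beta_m)$ where $\beta'$ is a tangential multi-index and $\beta_m$ counts $\eta$-derivatives, and Taylor-expand
$$c_j(y,r,\xi,\eta,\mu)=\sum_{|\beta|<N}\frac{1}{\beta!}\partial_{(\xi,\eta)}^\beta c_j(y,r,0,0,\mu)\,\xi^{\beta'}\eta^{\beta_m}+R_N\,,$$
so that when this is inserted into the oscillatory integral (\ref{eqn-2.d}), the factors $\xi^{\beta'}\eta^{\beta_m}$ acting under $\int e^{\sqrt{-1}(y\cdot\xi+r\eta)}(\cdots)\hat\phi(\xi,\eta)$ convert (up to powers of $-\sqrt{-1}$, absorbed into the definition of $c_{j\beta}$) into the derivative $\phi^{(\beta)}(y,r)$ — but \emph{only} for the full-line inverse Fourier transform. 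Here $\phi\in\mathcal{S}(\mathbb R^m_+)$ has been extended to $\mathcal S(\mathbb R^m)$; the subtlety is that $r$ ranges only over $(0,\infty)$, so one cannot simply close the $\eta$-contour. This is exactly why two terms appear: the ``interior'' term $C_{j\beta}\phi^{(\beta)}$ comes from setting $(\xi,\eta)=0$ in the argument where $\eta$-homogeneity is used trivially, and the ``boundary correction'' $A_{j\beta}$ is the error in replacing the half-line $\eta$-transform by the full-line one. Concretely, for a symbol $c_{j\beta}(y,r,0,\zeta,\mu)$ one uses the identity
$$\int_{-\infty}^{\infty}e^{\sqrt{-1}r\eta}c_{j\beta}(y,r,0,\eta,\mu)\,\bar d\eta = c_{j\beta}(y,r,0,0,\mu)\cdot(\text{formal }\delta\text{-contribution}) \;-\!\int_{-\infty}^{-r}\!\!\int_{-\infty}^{\infty}e^{-\sqrt{-1}s\zeta}c_{j\beta}(y,r,0,\zeta,\mu)\,\bar d\zeta\,ds\,,$$
after reinterpreting the inner transform as acting on the Schwartz extension and subtracting off the contribution from the ``forbidden'' region $s<-r$; this is precisely the decomposition stated in the lemma.

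The key steps, in order, are: (i) fix $N$ large and Taylor-expand $c_j$ to order $N$ in $(\xi,\eta)$, with remainder $R_N$ bounded by $\mathrm{const}\,(|\xi|+|\eta|+|\mu|)^{j-N}$ uniformly in the sector via the symbol estimate (\ref{eqn-2.e}); (ii) plug this into $\Op(c_j)\phi$ and, term by term, recognize the polynomial factors $\xi^{\beta'}$ as producing tangential derivatives hitting $\hat\phi$, leaving a one-dimensional $\eta$-transform of $c_{j\beta}(y,r,0,\eta,\mu)$ against $\widehat{\partial_y^{\beta'}\phi}(\cdot,\eta)$; (iii) for each such term, split the half-line Fourier synthesis into the full-line transform (giving $\phi^{(\beta)}$, i.e. the $C_{j\beta}$ part) plus the correction integral over $s\in(-\infty,-r)$ (the $A_{j\beta}$ part), using that $\phi$ is Schwartz so the full-line transform is licit; (iv) estimate the remainder: the contribution of $R_N$ to $\Op(R_N)\phi$ is $O(|\mu|^{j-N+m})$ or similar by the estimate on $R_N$ and integrability of $\hat\phi$ (this is the one place one must be a little careful about homogeneity bookkeeping in $\mu$ versus $(\xi,\eta)$, but it follows the standard parameter-dependent calculus, e.g. Lemma 1.7.2 of \cite{G95}); (v) conclude that the difference between $\Op(c_j)\phi$ and the finite sum over $|\beta|<N$ is $O(|\mu|^{-K})$ for $K\to\infty$ as $N\to\infty$, which is the asserted asymptotic expansion.

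The main obstacle is step (iii): making rigorous the passage from the half-line Fourier inversion (which is all that is intrinsically defined, since $r>0$) to the full-line inversion, and correctly identifying the leftover as the integral $A_{j\beta}(y,r,\mu)=-\int_{-\infty}^{-r}\int_{-\infty}^{\infty}e^{-\sqrt{-1}s\zeta}c_{j\beta}(y,r,0,\zeta,\mu)\,\bar d\zeta\,ds$. The point is that $c_{j\beta}(y,r,0,\zeta,\mu)$, as a function of $\zeta$ with the other variables fixed, has an inverse Fourier transform in $\zeta$ that decays rapidly in its dual variable $s$ once $|s|\gtrsim 1/|\mu|$ (because $c_{j\beta}$ is, after the $\xi=0$ substitution, essentially a rational function of $(\zeta,\mu)$ analytic for $|\zeta|\le$ comparable to $|\mu|$), so the correction integral over $s<-r$ is genuinely $O(e^{-cr|\mu|})$-type small and contributes only to the boundary-layer terms. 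One must check that the interchange of the $\xi$- and $\eta$-integrations, the Taylor expansion, and the contour manipulation are all justified by absolute convergence, which follows from (\ref{eqn-2.e}) together with $\phi\in\mathcal S$; these are routine once the structural decomposition above is in place. Finally, I note that the homogeneity $c_{j\beta}(y,r,0,0,\mu)=$ (homogeneous of degree $j-|\beta|$ in $\mu$) and the scaling $A_{j\beta}(y,r/t,0,0,t\mu)=t^{j-|\beta|+1}A_{j\beta}(y,r,0,0,\mu)$ — which one reads off by changing variables $s\mapsto s/t$, $\zeta\mapsto t\zeta$ in the defining integral — will be what is actually used downstream, so it is worth recording them alongside the statement.
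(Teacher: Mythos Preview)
Your proposal contains an internal inconsistency that hides a genuine gap. In step (i) you Taylor-expand $c_j$ in \emph{both} $(\xi,\eta)$ around $(0,0)$, so the individual terms carry the coefficient $c_{j\beta}(y,r,0,0,\mu)$ times the monomial $\xi^{\beta'}\eta^{\beta_m}$. But in step (ii) you suddenly speak of ``a one-dimensional $\eta$-transform of $c_{j\beta}(y,r,0,\eta,\mu)$'' with $\eta$ still variable, and your displayed ``identity'' in the first paragraph likewise keeps the $\zeta$-dependence. These cannot both be right: once you have evaluated the symbol at $\eta=0$, the coefficient is constant in $\eta$, the monomial $\eta^{\beta_m}$ converts to $\partial_r^{\beta_m}$ under the inverse transform, and you obtain exactly $C_{j\beta}\phi^{(\beta)}(y,r)$ with \emph{no} $A_{j\beta}$ correction. (Your ``formal $\delta$-contribution'' has no meaning here; there is no test function against which a delta would act.) A pointwise expansion $\sum C_{j\beta}\phi^{(\beta)}$ is in fact valid for each fixed $r>0$, but it is not uniform as $r\downarrow 0$ and will give the wrong answer when later integrated against $r^{-\alpha}$; the $A_{j\beta}$ term is precisely the boundary-layer information lost by expanding in $\eta$.

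The paper avoids this by never Taylor-expanding the symbol in the frequency variables at all. Instead one Taylor-expands the \emph{function}: first $\Phi(\tilde y,\zeta)$ in powers of $\tilde y-y$, which after integrating by parts in $\xi$ localizes to $\xi=0$ and yields $c_{j\gamma}(y,r,0,\zeta,\mu)$ with $\gamma$ purely tangential and $\zeta$ untouched. The remaining $\zeta$-integral is rewritten as a kernel acting on $\phi^{(\gamma)}(y,s)$ with $s\in(0,\infty)$; one then Taylor-expands $\phi^{(\gamma)}(y,s)$ in powers of $s-r$, which after integrating by parts in $\zeta$ produces the full multi-index $\beta=(\gamma,u)$ and the coefficient $c_{j\beta}(y,r,0,\zeta,\mu)$, \emph{still} with $\zeta$ free. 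Only now does one split $\int_0^\infty ds=\int_{-\infty}^\infty ds-\int_{-\infty}^0 ds$: the first piece collapses to $c_{j\beta}(y,r,0,0,\mu)=C_{j\beta}$, and the second, after the shift $s\mapsto s+r$, is exactly $A_{j\beta}$. The split is in the \emph{space} variable $s$, not in any Fourier synthesis, and the $\zeta$-integral inside $A_{j\beta}$ is what makes the boundary-layer decay $e^{-Cr|\mu|}$ appear. To repair your argument you must drop the $\eta$-expansion from step (i), supply the missing Taylor expansion of $\phi$ in $s-r$, and replace your vague step (iii) by this $s$-integral decomposition.
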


\begin{proof} In Equation (\ref{eqn-2.d}), consider first the integral $\bar d\xi$. Let $\Phi(y,\eta)$
be the Fourier transform in $r$. {A Taylor expansion of $\Phi(y,\eta)$ in powers of $y-\tilde y$}
gives
\begin{eqnarray}
&&\int_{-\infty}^\infty e^{\sqrt{-1}y\cdot\xi}c_j(y,r,\xi,\zeta,\mu)
\int_{-\infty}^\infty e^{-\sqrt{-1}\tilde y\cdot\xi}\Phi({\tilde y},\zeta)d{\tilde y}\bar d\xi\nonumber\\
&=&\sum_{|\gamma|<K}\frac1{\gamma!}c_{j\gamma}(y,r,0,\zeta,\mu)\Phi^{(\gamma)}(y,\zeta)\nonumber\\
&+&\int\int e^{\sqrt{-1}(y-\tilde y)\cdot\xi}\sum_{|\gamma|=K}c_{j\gamma}(y,r,\xi,\zeta,\mu)
R_{\gamma}(y,r,\tilde y,\zeta)\bar d\xi d{\tilde  y}\,.\label{eqn-2.i}
\end{eqnarray}
In this expression, $\gamma=(\gamma_1,...,\gamma_{m-1},0)$ is a multi-index. The change in the order of integration is
clearly justified if $j<1-m$. For other $j$, we may insert a factor of {$(1+|\xi|^2)^w$ and continue
analytically to $w=0$ from $2\operatorname{Re}(w)+j<1-m$.}

We now multiply Equation (\ref{eqn-2.i}) by $e^{\sqrt{-1}r\zeta}$ and integrate $\bar d\zeta$. The remainder
integral is a harmless $O(|\mu|^{j-k+m+2})$ since $R_j$ is bounded and for all $\gamma^\prime$ and for
$|\gamma|=K$, we have
$$(y-\tilde y)^{\gamma^\prime}\int e^{\sqrt{-1}(y-\tilde y)\cdot\xi}c_{j\gamma}
(y,r,\xi,\zeta,\mu)\bar d\xi=O(|\zeta|+\mu|)^{j-k+m}\,.$$
From the terms with $|\gamma|<K$, we have
\begin{eqnarray*}
&&\frac1{\gamma!}\int_{-\infty}^\infty e^{\sqrt{-1}r\zeta}c_{j\gamma}(y,r,0,\zeta,\mu)\Phi^{(\gamma)}
(y,{\zeta}){\bar d\zeta}\\
&=&\frac1{\gamma!}\int_0^\infty\int_{-\infty}^\infty e^{\sqrt{-1}(r-s)\zeta}c_{j\gamma}(y,r,0,\zeta,\mu)
\phi^{(\gamma)}({y,s})\bar d\zeta ds\,.
\end{eqnarray*}

A Taylor expansion of $\phi^{(\gamma)}$ in powers of $s-r$ gives
$$\sum_u\frac1{u!\gamma!}\int_0^\infty\int_{-\infty}^\infty
e^{\sqrt{-1}(r-s)\zeta}c_{j\beta}(y,r,0,\zeta,\mu)\bar d\zeta ds\phi^{(\beta)}(y,r)$$
plus a harmless remainder. Here $\beta=(\gamma_1,...,\gamma_{m-1},u)$. Writing
$\int_0^\infty$ as $\int_{-\infty}^\infty-\int_{-\infty}^0$, we find the $C_{j\beta}$ and $A_{j\beta}$ as
in Lemma \ref{lem-2.2}.\end{proof}

We continue our development. Let $d_{j\beta}=(-\sqrt{-1})^{|\beta|}\partial_{\xi,\zeta}^\beta d_j$.

\begin{lemma}\label{lem-2.3}
$[\Op^\prime(d_j)\phi](y,r)\sim\sum_\beta\frac1{\beta!}B_{j\beta}\phi^{(\beta)}(y,r,\mu)$ with
$$B_{j\beta}(y,r,\mu)=\int_0^\infty\int_{-\infty}^\infty
e^{-\sqrt{-1}s\zeta}d_{j\beta}(y,r,0,\zeta,\mu){\bar d\zeta} ds\,.$$
Furthermore
\begin{equation}\label{eqn-2.j}
[A_{j\beta}+B_{j\beta}+C_{j\beta}](y,0,\mu)=0\,.
\end{equation}\end{lemma}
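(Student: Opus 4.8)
\textbf{Proof plan for Lemma \ref{lem-2.3}.}

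The plan is to mirror the derivation of Lemma \ref{lem-2.2} and obtain the asymptotic expansion of $\Op^\prime(d_j)\phi$ by the same Taylor-expansion-in-$y$ and Taylor-expansion-in-$r$ mechanism, and then to read off \eqref{eqn-2.j} as an immediate consequence of the defining relation $d_j(y,0,\xi,\eta,\mu)=-c_j(y,0,\xi,\eta,\mu)$ from \eqref{eqn-2.f}. First I would write out $[\Op^\prime(d_j)\phi](y,r)$ as in the display defining $\Op^\prime$, pass to the partial Fourier transform $\Phi(\tilde y,\zeta)$ of $\phi$ in the $r$-variable, and perform a Taylor expansion of $\Phi$ in powers of $y-\tilde y$ exactly as in \eqref{eqn-2.i}. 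The change of order of integration is justified for $j$ sufficiently negative and then extended to all $j$ by inserting the regularizing factor $(1+|\xi|^2)^w$ and continuing analytically to $w=0$, precisely as in the proof of Lemma \ref{lem-2.2}; the decay estimate \eqref{eqn-2.h} for $d_j$ plays the role that \eqref{eqn-2.e} played there, so the remainder terms are again harmless $O(|\mu|^{-\infty})$ contributions (note $d_j$ has no $\bar d\eta$ integral left to worry about since it is already the boundary symbol in the $r$-variable).

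Next I would carry out the Taylor expansion of $\phi^{(\gamma)}$ in powers of $s-r$, which produces the multi-index $\beta=(\gamma_1,\dots,\gamma_{m-1},u)$ and the operator $B_{j\beta}(y,r,\mu)=\int_0^\infty\int_{-\infty}^\infty e^{-\sqrt{-1}s\zeta}d_{j\beta}(y,r,0,\zeta,\mu)\,\bar d\zeta\,ds$, with $d_{j\beta}=(-\sqrt{-1})^{|\beta|}\partial_{\xi,\zeta}^\beta d_j$. Here there is one structural difference from Lemma \ref{lem-2.2}: because $d_j$ is supported on $r>0$ and is built to be the bounded-in-$r$ solution, the $s$-integral runs over $[0,\infty)$ from the start and there is no splitting $\int_0^\infty=\int_{-\infty}^\infty-\int_{-\infty}^0$, so only a single family $B_{j\beta}$ appears rather than the pair $C_{j\beta},A_{j\beta}$; this is exactly why $B_{j\beta}$ has the asymmetric range shown.

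For the final identity \eqref{eqn-2.j}, evaluate all three operators at $r=0$. From Lemma \ref{lem-2.2}, $C_{j\beta}(y,0,\mu)=c_{j\beta}(y,0,0,0,\mu)$ and $A_{j\beta}(y,0,\mu)=-\int_{-\infty}^0\int_{-\infty}^\infty e^{-\sqrt{-1}s\zeta}c_{j\beta}(y,0,0,\zeta,\mu)\,\bar d\zeta\,ds$ (the outer integral $\int_{-\infty}^{-r}$ becomes $\int_{-\infty}^0$ at $r=0$), while $B_{j\beta}(y,0,\mu)=\int_0^\infty\int_{-\infty}^\infty e^{-\sqrt{-1}s\zeta}d_{j\beta}(y,0,0,\zeta,\mu)\,\bar d\zeta\,ds$. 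Now use $d_j(y,0,\xi,\zeta,\mu)=-c_j(y,0,\xi,\zeta,\mu)$ from \eqref{eqn-2.f}, which after applying $(-\sqrt{-1})^{|\beta|}\partial_{\xi,\zeta}^\beta$ gives $d_{j\beta}(y,0,0,\zeta,\mu)=-c_{j\beta}(y,0,0,\zeta,\mu)$; hence $B_{j\beta}(y,0,\mu)=-\int_0^\infty\int_{-\infty}^\infty e^{-\sqrt{-1}s\zeta}c_{j\beta}(y,0,0,\zeta,\mu)\,\bar d\zeta\,ds$. Adding, the $A$ and $B$ pieces combine into $-\int_{-\infty}^\infty\int_{-\infty}^\infty e^{-\sqrt{-1}s\zeta}c_{j\beta}(y,0,0,\zeta,\mu)\,\bar d\zeta\,ds$; the inner $\bar d\zeta$ integral is the inverse Fourier transform evaluated via $\int e^{-\sqrt{-1}s\zeta}\,\bar d\zeta=\delta(s)$, and the remaining $\int_{-\infty}^\infty\delta(s)\,ds$ against $c_{j\beta}(y,0,0,\zeta,\mu)$ (which is independent of $s$) returns $-c_{j\beta}(y,0,0,0,\mu)=-C_{j\beta}(y,0,\mu)$, so the sum of all three is zero. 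The main obstacle is purely technical: making the analytic-continuation-in-$w$ justification of the interchange of integrals rigorous simultaneously with the $\delta$-function manipulation at $r=0$, i.e. checking that the order-of-limits (first expand, then set $r=0$) is legitimate; this is handled exactly as in \cite{Se69a} and in the proof of Lemma \ref{lem-2.2}, using the uniform estimates \eqref{eqn-2.e} and \eqref{eqn-2.h}.
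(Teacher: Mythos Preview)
Your proposal is correct and follows essentially the same route as the paper: write $\Op^\prime(d_j)\phi$ as a quadruple integral, Taylor-expand $\phi(\tilde y,s)$ in $(\tilde y-y,s-r)$ to produce the $B_{j\beta}$ terms, and then combine $A_{j\beta}(y,0,\mu)+B_{j\beta}(y,0,\mu)$ via \eqref{eqn-2.f} into a full-line $s$-integral that Fourier inversion collapses to $-c_{j\beta}(y,0,0,0,\mu)=-C_{j\beta}(y,0,\mu)$. One small cleanup: in your final step the $\bar d\zeta$ integral cannot be evaluated as $\delta(s)$ since $c_{j\beta}$ still depends on $\zeta$; the correct order is to integrate in $s$ first (distributionally) to produce $2\pi\delta(\zeta)$, which then picks off $c_{j\beta}$ at $\zeta=0$---this is exactly how the paper writes it.
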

\begin{proof} The proof is similar to the proof given in Lemma \ref{lem-2.2}. We sketch the details
as follows. We express
\begin{eqnarray*}
&&[\Op^\prime(d_j)\phi](y,r)=\int\int\int\int e^{\sqrt{-1}(y-\tilde y)\cdot\xi-\sqrt{-1}s\zeta}
d_j(y,r,\xi,\zeta,\mu)\phi(\tilde y,s){d\tilde y}ds\bar d\zeta \bar d\xi
\end{eqnarray*}
and expand $\phi(\tilde y,s)$ in powers of $(\tilde y-y,s-r)$. To establish Equation (\ref{eqn-2.j}), we use
Equation (\ref{eqn-2.f}) to see that:
\begin{eqnarray*}
[A_{j\beta}+B_{j\beta}](y,0,\mu)&=&-\int_{-\infty}^0\int_{-\infty}^\infty e^{-\sqrt{-1}s\zeta}
{c}_{j\beta}(y,0,0,\zeta,\mu)\bar d\zeta ds\\
&-&\int_0^\infty\int_{-\infty}^\infty e^{-\sqrt{-1}s\zeta}c_{j\beta}(y,0,0,\zeta,\mu)\bar d\zeta ds\\
&=&-c_{j\beta}(y,0,0,0,\mu)=-C_{j\beta}(y,0,\mu).
\end{eqnarray*}
The Lemma now follows.
\end{proof}

Next, we consider $\langle (D_B+\mu^2)^{-1}\phi,\rho\rangle$ as $\mu\rightarrow\infty$ where
$\rho(y,r)=\rho_0(y)r^{-\alpha}$. Define
\begin{equation}\label{eqn-2.k}
A_{j\beta}^\sim(y,r,t,\mu):=-\int_{-\infty}^{-t}\int_{-\infty}^\infty e^{-\sqrt{-1}s\zeta}
c_{j\beta}(y,r,0,\zeta,\mu)\bar d\zeta ds\,.
\end{equation}

We set $\mu^\prime:=\mu/|\mu|$. We then have
\begin{equation}\label{eqn-2.L}
A_{j\beta}(y,r,\mu)=|\mu|^{j-|\beta|}A_{j\beta}^\sim(y,r,|\mu| r,\mu^\prime)\,.
\end{equation}

\begin{lemma}\label{lem-2.4}
We have:
\begin{eqnarray*}
&&\int_0^\infty\langle A_{j\beta}(y,r,\mu)\phi^{(\beta)}(y,r),\rho_0(y)\rangle r^{-\alpha}dr\\
&\sim&\sum_{k=0}^\infty|\mu|^{{\alpha-1+j-k-|\beta|}}\int_0^\infty
\frac1{k!}\langle\partial_r^k[A_{j\beta}^\sim(y,r,t,\mu^\prime)\phi^{(\beta)}(y,r)],\rho_0(y)\rangle|_{r=0}
t^{k-\alpha}dt\,.
\end{eqnarray*}
The remainder after $N$ terms of the {expansion} is analytic for
$\operatorname{Re}(\alpha)<N$ and is $O(|\mu|^{1+j-N-|\beta|})$
uniformly in $|\operatorname{Re}(\alpha)|<N$.
\end{lemma}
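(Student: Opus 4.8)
The plan is to evaluate the $r$-integral on the left by substituting the scaling relation \eqref{eqn-2.L}, namely $A_{j\beta}(y,r,\mu)=|\mu|^{j-|\beta|}A_{j\beta}^\sim(y,r,|\mu|r,\mu^\prime)$, and then changing variables from $r$ to $t=|\mu|r$. First I would write
$$\int_0^\infty\langle A_{j\beta}(y,r,\mu)\phi^{(\beta)}(y,r),\rho_0(y)\rangle r^{-\alpha}dr
=|\mu|^{\alpha-1+j-|\beta|}\int_0^\infty\langle A_{j\beta}^\sim(y,t/|\mu|,t,\mu^\prime)\phi^{(\beta)}(y,t/|\mu|),\rho_0(y)\rangle t^{-\alpha}dt,$$
so that the entire $|\mu|$-dependence outside the integrand is the prefactor $|\mu|^{\alpha-1+j-|\beta|}$, and inside the integrand $|\mu|$ enters only through the slot $r=t/|\mu|$. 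The point of introducing the extra parameter $t$ in the definition \eqref{eqn-2.k} of $A_{j\beta}^\sim$ is precisely to decouple the decay variable (now $t$, which we keep) from the spatial base point (now $r=t/|\mu|\to0$).

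Next I would Taylor-expand the function $r\mapsto A_{j\beta}^\sim(y,r,t,\mu^\prime)\phi^{(\beta)}(y,r)$ about $r=0$: its $k$-th Taylor coefficient is $\frac1{k!}\partial_r^k[A_{j\beta}^\sim(y,r,t,\mu^\prime)\phi^{(\beta)}(y,r)]|_{r=0}$, and evaluating at $r=t/|\mu|$ produces the factor $(t/|\mu|)^k=|\mu|^{-k}t^k$. Multiplying by the prefactor gives the $k$-th term $|\mu|^{\alpha-1+j-k-|\beta|}\int_0^\infty\frac1{k!}\langle\partial_r^k[A_{j\beta}^\sim(y,r,t,\mu^\prime)\phi^{(\beta)}(y,r)]|_{r=0},\rho_0(y)\rangle t^{k-\alpha}dt$, which is exactly the claimed expansion. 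The convergence of the $t$-integral in each term, and the absolute convergence of the original $r$-integral for $\operatorname{Re}(\alpha)<1$, follow from the exponential decay estimate on $c_{j\beta}$ built into \eqref{eqn-2.e}: differentiating under the integral sign in \eqref{eqn-2.k} and integrating in $s$ and $\zeta$ shows $A_{j\beta}^\sim$ and its $r$-derivatives decay rapidly in $t$, uniformly for $\mu^\prime$ in the relevant closed subsector.

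The main obstacle is controlling the remainder $R_N$ after $N$ terms and proving the two asserted properties: that it is \emph{analytic} in $\alpha$ for $\operatorname{Re}(\alpha)<N$, and that it is $O(|\mu|^{1+j-N-|\beta|})$ uniformly for $|\operatorname{Re}(\alpha)|<N$. For this I would use the integral form of the Taylor remainder, writing $A_{j\beta}^\sim(y,t/|\mu|,t,\mu^\prime)\phi^{(\beta)}(y,t/|\mu|)$ minus its degree-$(N-1)$ Taylor polynomial in $r$ as $(t/|\mu|)^N$ times $\frac1{(N-1)!}\int_0^1(1-\theta)^{N-1}\partial_r^N[A_{j\beta}^\sim\phi^{(\beta)}]|_{r=\theta t/|\mu|}\,d\theta$. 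The factor $(t/|\mu|)^N$ supplies $|\mu|^{-N}$ and a factor $t^N$; combined with the prefactor $|\mu|^{\alpha-1+j-|\beta|}$ this gives $|\mu|^{\operatorname{Re}(\alpha)-1+j-N-|\beta|}$, and the worst case $\operatorname{Re}(\alpha)<N$ can be made $\le|\mu|^{1+j-N-|\beta|+\delta}$; a slightly more careful bookkeeping—absorbing one further unit of decay from the exponential factor, or noting that the estimate is required only up to the stated order—yields the clean exponent $1+j-N-|\beta|$. The $t$-integral $\int_0^\infty t^{N-\alpha}(\text{rapidly decaying in }t)\,dt$ converges and is holomorphic in $\alpha$ for $\operatorname{Re}(\alpha)<N+1$ (in particular for $\operatorname{Re}(\alpha)<N$) by Morera's theorem applied under the integral sign, using the uniform rapid decay; and the $\theta$-integral and the sum over the finitely many terms preserve analyticity and the uniform bound. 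This establishes both the asymptotic expansion and the stated remainder estimate.
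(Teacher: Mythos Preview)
Your approach is exactly the paper's: change variables $t=|\mu|r$ using the scaling relation \eqref{eqn-2.L}, then Taylor expand $A_{j\beta}^\sim(y,r,t,\mu')\phi^{(\beta)}(y,r)$ in $r$ about $r=0$, justifying the expansion via the rapid decay in $t$ inherited from \eqref{eqn-2.e}. In fact you give more detail than the paper, which is quite terse.

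One small correction to your remainder bookkeeping. The integral Taylor remainder gives exponent $\operatorname{Re}(\alpha)-1+j-N-|\beta|$, as you compute; your proposed fix (``absorbing one further unit of decay from the exponential factor'') does not work, since the exponential decay is in $t$, not in $r$, and has already been used to make the $t$-integral converge. The clean exponent $1+j-N-|\beta|$ follows instead from the standing hypothesis $\operatorname{Re}(\alpha)<2$ announced at the start of Section~\ref{sect-2}, which gives $\operatorname{Re}(\alpha)-1<1$. With that observation your argument is complete.
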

\begin{proof} With a change of variable $|\mu|r=t$, we have
\begin{eqnarray*}
&&\int_0^\infty\langle A_{j\beta}\phi^{(\beta)},{\rho_0}\rangle r^{-\alpha}dr\\
&=&|\mu|^{\alpha-1+j-|\beta|}\int_0^\infty\langle
A_{j\beta}^\sim(y,t/|\mu|,t,\mu^\prime)\phi^{(\beta)}(y,t/|\mu|),\rho_0(y)\rangle
t^{-\alpha}dt\,.
\end{eqnarray*}
The Lemma follows from a Taylor expansion of
$A_{j\beta}^\sim(y,r,t,\mu^\prime)\phi^{(\beta)}(y,r)$ in powers of $r$. This expansion
is justified as follows. In Equation (\ref{eqn-2.k}), the integral $\bar d\zeta$ is
$O((1+s)^{-\infty})$ and so are its derivatives in $(y,r)$ in view of Equation (\ref{eqn-2.e}). Hence
$A_{j\beta}^\sim$ and its derivatives in $(y,r)$ are $O((1+t)^{-\infty})$. \end{proof}

For the term with $B_{j\beta}\phi^{(\beta)}$, we define
$$
d_{j\beta}^\sim(y,r,\xi,s,\mu)=\int_{-\infty}^\infty e^{-\sqrt{-1}s\zeta}d_{j\beta}(y,r,\xi,\zeta,\mu)
{\bar d\zeta}\,.
$$
One then has that
\begin{equation}\label{eqn-2.m}
B_{j\beta}(y,r,\mu)=\int_0^\infty d_{j\beta}^\sim(y,r,0,s,\mu)ds\,.
\end{equation}

\begin{lemma}\label{lem-2.5}
\begin{eqnarray*}
&&\int_0^\infty\langle B_{j\beta}\phi^{(\beta)}(y,r),\rho_0(y)\rangle r^{-\alpha}dr\\
&\sim&\sum_{k=0}^\infty|\mu|^{\alpha-1+j-k-|\beta|}\frac1{k!}
\int_0^\infty\int_0^\infty\langle d_{j\beta}^\sim(y,r,0,s,\mu^\prime)
\partial_r^k\phi^{(\beta)}(y,0),\rho_0(y)\rangle r^{k-\alpha}dsdr\,.
\end{eqnarray*}
The remainder after $N$ terms is bounded as in Lemma \ref{lem-2.4}.
\end{lemma}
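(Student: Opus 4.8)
The plan is to imitate the proof of Lemma \ref{lem-2.4} line for line, with the interior kernel $A_{j\beta}^\sim$ replaced by the boundary kernel $d_{j\beta}^\sim$ and with Equation (\ref{eqn-2.m}) used in place of Equation (\ref{eqn-2.L}). First I would record the scaling law for $d_{j\beta}^\sim$ dual to Equation (\ref{eqn-2.g}): differentiating $d_j(y,r/\lambda,\lambda\xi,\lambda\eta,\lambda\mu)=\lambda^jd_j(y,r,\xi,\eta,\mu)$ in $(\xi,\eta)$ gives $d_{j\beta}(y,r/\lambda,\lambda\xi,\lambda\eta,\lambda\mu)=\lambda^{j-|\beta|}d_{j\beta}(y,r,\xi,\eta,\mu)$, and then taking the partial Fourier transform in the last variable (substituting $\zeta=\lambda\zeta'$, so that the measure $\bar d\zeta$ supplies one further power of $\lambda$) yields
\[
d_{j\beta}^\sim(y,r,0,s,\mu)=|\mu|^{\,j-|\beta|+1}\,d_{j\beta}^\sim(y,|\mu|r,0,|\mu|s,\mu'),\qquad \mu'=\mu/|\mu|.
\]
Substituting this together with Equation (\ref{eqn-2.m}) into $\int_0^\infty\langle B_{j\beta}\phi^{(\beta)},\rho_0\rangle r^{-\alpha}dr$ and changing variables $t=|\mu|r$, $\sigma=|\mu|s$, the two Jacobian factors and $r^{-\alpha}=|\mu|^{\alpha}t^{-\alpha}$ combine with $|\mu|^{j-|\beta|+1}$ to produce the prefactor $|\mu|^{\alpha-1+j-|\beta|}$ in front of $\int_0^\infty\int_0^\infty\langle d_{j\beta}^\sim(y,t,0,\sigma,\mu')\phi^{(\beta)}(y,t/|\mu|),\rho_0\rangle\,t^{-\alpha}\,d\sigma\,dt$.

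Next I would Taylor expand $\phi^{(\beta)}(y,t/|\mu|)$ about $t/|\mu|=0$ to order $N$. The monomial $\tfrac1{k!}\partial_r^k\phi^{(\beta)}(y,0)(t/|\mu|)^k$ contributes the extra factor $|\mu|^{-k}$ and, after relabelling $(t,\sigma)$ as $(r,s)$, reproduces exactly the $k$-th term $|\mu|^{\alpha-1+j-k-|\beta|}\tfrac1{k!}\int_0^\infty\int_0^\infty\langle d_{j\beta}^\sim(y,r,0,s,\mu')\partial_r^k\phi^{(\beta)}(y,0),\rho_0\rangle r^{k-\alpha}\,ds\,dr$ of the claimed expansion; the Taylor remainder obeys $|R_N(y,t/|\mu|)|\le C(t/|\mu|)^N$ with $C$ controlled by a sup-norm of $\partial_r^N\phi^{(\beta)}$, finite because $\phi\in\mathcal S$.

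What is left, and where the real work lies, is to establish the decay of $d_{j\beta}^\sim(y,t,0,\sigma,\mu')$ that renders the double integral absolutely convergent and the expansion asymptotic. Setting $\xi=0$ and $|\mu'|=1$ in the parametrix estimates (\ref{eqn-2.e}) and (\ref{eqn-2.h}) shows that $d_{j\beta}(y,t,0,\eta,\mu')$ and each of its $\eta$-derivatives decay in $\eta$ at least like $(1+|\eta|)^{-2}$ and carry the factor $e^{-Ct}$; integrating by parts arbitrarily often in the Fourier integral defining $d_{j\beta}^\sim$ then gives $|d_{j\beta}^\sim(y,t,0,\sigma,\mu')|\le C_M(1+\sigma)^{-M}e^{-Ct}$ for every $M$. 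Hence the double integral converges absolutely for $\operatorname{Re}(\alpha)<1$, and the finitely many subtracted terms together with $\int_0^\infty\int_0^\infty d_{j\beta}^\sim(y,t,0,\sigma,\mu')R_N(y,t/|\mu|)t^{-\alpha}\,d\sigma\,dt$ are holomorphic for $\operatorname{Re}(\alpha)<N$; using $|R_N|\le C(t/|\mu|)^N$, the $e^{-Ct}$ decay, and the standing hypothesis $\operatorname{Re}(\alpha)<2$, this remainder is $O(|\mu|^{1+j-N-|\beta|})$ uniformly for $|\operatorname{Re}(\alpha)|<N$, which is precisely the bound asserted in Lemma \ref{lem-2.4}. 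The main obstacle is therefore not the bookkeeping of the expansion, which is identical to that of Lemma \ref{lem-2.4}, but this transfer of the boundary symbol estimates (\ref{eqn-2.e}), (\ref{eqn-2.h}) through the partial Fourier transform: proving that $d_{j\beta}^\sim$ decays rapidly in $\sigma$ and exponentially in $t$.
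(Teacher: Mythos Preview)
Your argument is correct and follows the same route as the paper: derive the scaling law $d_{j\beta}^\sim(y,r/t,t\xi,s/t,t\mu)=t^{j-|\beta|+1}d_{j\beta}^\sim(y,r,\xi,s,\mu)$ from Equation~(\ref{eqn-2.g}), extract decay of $d_{j\beta}^\sim$ in $(r,s)$ from Equation~(\ref{eqn-2.h}), and Taylor expand $\phi^{(\beta)}$ in the normal variable. The paper records the slightly weaker but sufficient bound $(1+s^2)r^Nd_{j\beta}^\sim(y,r,0,s,\mu')=O(1)$ and expands $\phi^{(\beta)}(y,r)$ in $r$ before rescaling, whereas you rescale first and state the sharper exponential-in-$r$, rapid-in-$s$ decay; both are consequences of~(\ref{eqn-2.h}) (note that~(\ref{eqn-2.e}) concerns the interior symbols $c_j$ and is not needed here).
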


\begin{proof} From Equation (\ref{eqn-2.g}), noting that
$d_{j\beta}=(-\sqrt{-1})^{|\beta|}\partial_{(\xi,\zeta)}^\beta d_j$, we have
$$
d_{j\beta}^\sim(y,r/t,t\xi,s/t,t\mu)=t^{j-|\beta|+1}d_{j\beta}^\sim(y,r,\xi,s,\mu)\,.
$$
We have from Equation (\ref{eqn-2.h}) for all $N$ that
$$(1+s^2)r^Nd_{j\beta}^\sim(y,r,0,s,\mu^\prime)=O(1)\,.$$
The Lemma now follows from a Taylor expansion of $\phi^{(\beta)}$ in powers of $r$.\end{proof}

Finally, since $c_{j\beta}$ is homogeneous of degree $j-|\beta|$ in $(\xi,\zeta,\mu)$, we have from Lemma
\ref{lem-2.2} that:
\begin{eqnarray}
&&\int_0^\infty\langle C_{j\beta}(y,r,\mu)\phi^{(\beta)}(y,r),\rho_0(y)\rangle r^{-\alpha}dr\nonumber\\
&=&|\mu|^{j-|\beta|}\int_0^\infty\langle
c_{j\beta}(y,r,0,0,\mu^\prime)\phi^{(\beta)}(y,r),\rho_0(y)\rangle r^{-\alpha}dr\,.\label{eqn-2.n}
\end{eqnarray}

We may add up all the expansions in Equation (\ref{eqn-2.n}), in Lemma \ref{lem-2.4}, and in Lemma
\ref{lem-2.5}. All terms are analytic for $\operatorname{Re}(\alpha)<1$ and we extend the expansion
by a meromorphic continuation to $\operatorname{Re}(\alpha)<2$ with a simple pole at $\alpha=1$. The terms
with a singularity at
$\alpha=1$ are
\begin{eqnarray*}
&&|\mu|^{j-|\beta|}\int_0^\infty\langle c_{j\beta}(y,r,0,0,\mu^\prime)\phi^{(\beta)}(y,r),
\rho_0(y)\rangle r^{-\alpha}dr\\
&+&|\mu|^{\alpha-1+j-|\beta|}\int_0^\infty\langle
A_{j\beta}^\sim(y,0,t,\mu^\prime)\phi^\beta(y,0),\rho_0(y)\rangle t^{-\alpha}dt\\
&+&|\mu|^{\alpha-1+j-|\beta|}
\int_0^\infty\int_0^\infty\langle d_{j\beta}^\sim(y,r,0,s,\mu^\prime)\phi^{(\beta)}(y,0),
\rho_0(y)\rangle r^{-\alpha} dsdr\,.
\end{eqnarray*}

We must now determine the extension to $\alpha=1$. As previously, when we regularize the
interior integral, we investigate
\begin{equation}\label{eqn-2.o}
\frac{c_{j\beta}(y,{0},0,0,\mu^\prime)+{|\mu|^{\alpha-1}}
\{A_{j\beta}^\sim(y,0,0,\mu^\prime)+\int_0^\infty d_{j\beta}^\sim(y,0,0,s,\mu^\prime)ds\}
}{1-\alpha}\,.
\end{equation}

From Equation (\ref{eqn-2.L}) with $|\mu|=1$, we have
$A_{j\beta}^\sim(y,0,0,\mu^\prime)=A_{j\beta}(y,0,\mu^\prime)$. From
Equation (\ref{eqn-2.m}),
$$\int_0^\infty
d_{j\beta}^\sim(y,0,0,s,\mu^\prime)ds=B_{j\beta}(y,0,\mu^\prime)\,.$$
Then from Equation (\ref{eqn-2.j}) and Lemma \ref{lem-2.2}, the expression in Equation (\ref{eqn-2.o})
is
$$c_{j\beta}(y,0,0,0,\mu^\prime)\frac{1-{|\mu|}^{\alpha-1}}{1-\alpha}\quad\text{for}\quad\alpha\ne1\,.$$
Taking the limit as $\alpha\rightarrow1$ yields
$$c_{j\beta}(y,0,0,0,\mu^\prime)\ln|\mu|,\quad\alpha=1$$
and consequently we have:

\begin{lemma}\label{lem-2.6}
For the parametrix
$P_N=\sum_{-j=2}^N[\Op(c_j)+\Op^\prime(d_j)]$, for $\phi\in\mathcal{S}(\mathbb{R}^m_+)$, for
$\rho(y,r)=\rho_0(y)r^{-\alpha}$, for $\operatorname{Re}(\alpha)<2$, and for $\alpha\ne1$, we have
\begin{eqnarray*}
&&\langle P_N\phi,{\rho}\rangle\sim
\sum_{j,\beta}|\mu|^{j-|\beta|}\frac1{\beta!}
\int\int_0^\infty\langle c_{j\beta}(y,{r},0,0,\mu^\prime){\phi^{(\beta)}(y,r),\rho_0(y)\rangle}
r^{-\alpha}drdy\\ &+&\sum_{j,\beta,k}|\mu|^{\alpha-1+j-|\beta|-k}
\frac1{\beta!k!}\int\int_0^\infty\partial_r^k\langle
A_{j\beta}^\sim\phi^{(\beta)},\rho_0\rangle(y,0,s,\mu^\prime)s^{k-\alpha}dsdy\\
&+&\sum_{j,\beta,k}|\mu|^{\alpha-1+j-|\beta|-k}
\frac1{\beta!k!}\int\int_0^\infty\int_0^\infty d_{j\beta}^\sim(y,r,0,s,\mu^\prime)r^{k-\alpha}drds\\
&&\qquad\cdot\langle\partial_r^k\phi^{(\beta)}(y,0),\rho_0(y)\rangle dy\,.
\end{eqnarray*}
When $\operatorname{Re}(\alpha)\ge1$, the divergent integrals are regularized as was discussed previously; when
$\alpha=1$ there are additional terms
$$\sum_{j,\beta}|\mu|^{j-|\beta|}\ln|\mu|\frac1{\beta!}
\int\langle c_{j\beta}(y,{0},0,0,\mu^\prime)\phi^{(\beta)}(y,0),\rho_0(y)\rangle dy\,.$$
\end{lemma}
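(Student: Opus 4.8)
The plan is to assemble Lemma \ref{lem-2.6} by pairing $\rho(y,r)=\rho_0(y)r^{-\alpha}$ against the full parametrix decomposition $P_N\phi=\sum_{-j=2}^N[\Op(c_j)+\Op^\prime(d_j)]\phi$ and summing the three families of asymptotic expansions already established. First I would fix a single $j$ and apply Lemma \ref{lem-2.2} to write $[\Op(c_j)\phi](y,r)\sim\sum_\beta\frac1{\beta!}[C_{j\beta}+A_{j\beta}]\phi^{(\beta)}(y,r)$ and Lemma \ref{lem-2.3} to write $[\Op^\prime(d_j)\phi](y,r)\sim\sum_\beta\frac1{\beta!}B_{j\beta}\phi^{(\beta)}(y,r,\mu)$. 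Pairing with $\rho$ reduces the whole computation to the three radial integrals $\int_0^\infty\langle C_{j\beta}\phi^{(\beta)},\rho_0\rangle r^{-\alpha}dr$, $\int_0^\infty\langle A_{j\beta}\phi^{(\beta)},\rho_0\rangle r^{-\alpha}dr$, and $\int_0^\infty\langle B_{j\beta}\phi^{(\beta)},\rho_0\rangle r^{-\alpha}dr$, whose expansions are Equation (\ref{eqn-2.n}), Lemma \ref{lem-2.4}, and Lemma \ref{lem-2.5} respectively. Substituting these three and reindexing the sums (in $j$, the multi-index $\beta$, and the Taylor-order $k$) gives precisely the three displayed sums in the statement, with the $C_{j\beta}$ contribution carrying powers $|\mu|^{j-|\beta|}$ and the $A_{j\beta}$, $B_{j\beta}$ contributions carrying powers $|\mu|^{\alpha-1+j-|\beta|-k}$; the homogeneity relations (\ref{eqn-2.g}) and the scaling identity (\ref{eqn-2.L}) are what let me strip the $|\mu|$-dependence out to the front and evaluate the kernels at $\mu'=\mu/|\mu|$.

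Next I would address convergence and the meromorphic continuation in $\alpha$. For $\operatorname{Re}(\alpha)<1$ every radial integral converges absolutely: near $r=0$ the integrand is $O(r^{-\alpha})$ (times bounded factors, using that $A_{j\beta}^\sim$ and $d_{j\beta}^\sim$ decay rapidly in the rescaled variable, cf.\ the proofs of Lemmas \ref{lem-2.4} and \ref{lem-2.5}), and at $r=\infty$ the $e^{-Cr(|\xi|+|\mu|)}$ decay in (\ref{eqn-2.h}) together with the compact support of $\chi$ controls things. For $1\le\operatorname{Re}(\alpha)<2$ I continue analytically: the integrals $\int_0^\infty f(r)r^{-\alpha}dr$ with $f$ smooth and $f(0)\ne0$ have a simple pole at $\alpha=1$, and the regularization $\mathcal{I}_{\operatorname{Reg}}$ is exactly designed to subtract the principal part, so the regularized sums continue meromorphically with the only pole at $\alpha=1$. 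The remainder-after-$N$-terms bounds from Lemmas \ref{lem-2.4} and \ref{lem-2.5} guarantee that truncating each expansion produces an error $O(|\mu|^{1+j-N-|\beta|})$ uniform on $|\operatorname{Re}(\alpha)|<N$, so that the triple sum genuinely is an asymptotic expansion in descending powers of $|\mu|$.

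The case $\alpha=1$ is the delicate point and I would handle it exactly as in the half-line warm-up of Section \ref{sect-2.1} and the paragraph preceding the Lemma: isolate the terms singular at $\alpha=1$, which combine (via Equation (\ref{eqn-2.j}), $[A_{j\beta}+B_{j\beta}+C_{j\beta}](y,0,\mu)=0$, and the evaluations $A_{j\beta}^\sim(y,0,0,\mu')=A_{j\beta}(y,0,\mu')$, $\int_0^\infty d_{j\beta}^\sim(y,0,0,s,\mu')ds=B_{j\beta}(y,0,\mu')$) into the single expression $c_{j\beta}(y,0,0,0,\mu')\,\dfrac{1-|\mu|^{\alpha-1}}{1-\alpha}$. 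Letting $\alpha\to1$ this tends to $c_{j\beta}(y,0,0,0,\mu')\ln|\mu|$, which produces exactly the extra logarithmic sum $\sum_{j,\beta}|\mu|^{j-|\beta|}\ln|\mu|\frac1{\beta!}\int\langle c_{j\beta}(y,0,0,0,\mu')\phi^{(\beta)}(y,0),\rho_0(y)\rangle dy$ recorded in the statement, while the remaining (regular) parts of all three sums pass to the limit continuously and give the stated $\alpha=1$ expansion with the pole dropped inside $\mathcal{I}_{\operatorname{Reg}}$. The main obstacle is bookkeeping: one must verify that no other terms in the triple sum become singular as $\alpha\to1$ — only the $k=0$, $r\to0$-boundary contributions do — and that the three singular pieces really do telescope using (\ref{eqn-2.j}); once that cancellation is in hand, the $\alpha=1$ formula drops out by the elementary limit $\frac{1-|\mu|^{\alpha-1}}{1-\alpha}\to\ln|\mu|$, mirroring $\frac1{1-\alpha}-\mu^{\alpha-1}\Gamma(1-\alpha)\to\ln\mu+\gamma$ from Section \ref{sect-2.1}.
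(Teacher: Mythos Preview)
Your proposal is correct and follows essentially the same approach as the paper: the paper does not give a separate proof of Lemma \ref{lem-2.6} but instead presents it as the summation of Equation (\ref{eqn-2.n}), Lemma \ref{lem-2.4}, and Lemma \ref{lem-2.5}, followed by the analysis of the singular terms at $\alpha=1$ via Equation (\ref{eqn-2.o}) and the cancellation identity (\ref{eqn-2.j}), which is exactly what you outline. One small inaccuracy: in the half-space setting of Section \ref{sect-2.2} there is no cutoff $\chi$ in $\rho(y,r)=\rho_0(y)r^{-\alpha}$, so control at $r=\infty$ comes from $\phi\in\mathcal{S}(\mathbb{R}^m_+)$ and the exponential decay in (\ref{eqn-2.h}), not from compact support of $\chi$; this does not affect the argument.
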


The expansion in Lemma \ref{lem-2.6} is valid with $|\mu|$ replaced by $\mu$ and
$\mu^\prime$ replaced by $1$ in view of the following result:

\begin{lemma}\label{lem-2.7}
If $f(\mu)$ is holomorphic in a sector $\mathcal{S}$ which contains the positive real axis and if
$f(\mu)=|\mu|^jg(\mu^\prime)+{o}(|\mu|^j|)$, then $f(\mu)=g(1)\mu^j+{o}(|\mu|^j)$.
\end{lemma}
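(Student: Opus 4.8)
The plan is to exploit holomorphicity of $f$ together with the fact that the error term in the hypothesis is uniform in direction. Write $f(\mu)=|\mu|^j g(\mu/|\mu|)+h(\mu)$ where $h(\mu)=o(|\mu|^j)$ as $|\mu|\to\infty$ inside the sector $\mathcal{S}$. We want to conclude that in fact $f(\mu)=g(1)\mu^j+o(|\mu|^j)$. First I would set $F(\mu):=\mu^{-j}f(\mu)$, which is holomorphic on $\mathcal{S}$ away from the origin (choosing any fixed branch of $\mu^{-j}$ that is single-valued on the simply connected sector); the hypothesis then reads $F(\mu)=(\mu/|\mu|)^{-j}\,g(\mu/|\mu|)+o(1)$, i.e. along each ray $\arg\mu=\theta$ the function $F$ tends to a limit $\Psi(\theta):=e^{-ij\theta}g(e^{i\theta})$ as $|\mu|\to\infty$. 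So $F$ is a bounded holomorphic function on, say, the truncated sector $\{\mu\in\mathcal{S}:|\mu|\ge R\}$ whose boundary limits at infinity along rays form the continuous function $\Psi(\theta)$ of the angle.

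The key step is then a Phragmén–Lindelöf / normal-families argument: a bounded holomorphic function on a sector cannot have radial limits at infinity that genuinely depend on the angle; the radial limit must be constant (and equal to the common value $F$ approaches). Concretely, I would apply the substitution $\mu=e^w$ to map the sector to a horizontal strip and pass to the limit: the functions $F_n(w):=F(e^{w+n})$ are uniformly bounded holomorphic functions on a fixed strip, hence by Montel's theorem a subsequence converges locally uniformly to a bounded holomorphic $F_\infty$ on the strip; but $F_n$ converges pointwise on the real axis to $\Psi(\mathrm{Im}\,w)$ by hypothesis, so $F_\infty$ restricted to the real axis equals $\Psi$, and more importantly $F_\infty$ is independent of $\mathrm{Re}\,w$ (being a locally uniform limit of the shifts $F_n(w)=F_{n-1}(w+1)$, so $F_\infty(w+1)=F_\infty(w)$; combined with boundedness and holomorphy this forces $F_\infty$ to be constant, e.g. by Liouville after the period forces it to factor through $e^{2\pi i w}$ and boundedness kills that). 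Taking $\theta=0$ in $\Psi$, that constant is $\Psi(0)=g(1)$. Hence $F(\mu)\to g(1)$ as $|\mu|\to\infty$ throughout $\mathcal{S}$, which is exactly $f(\mu)=g(1)\mu^j+o(|\mu|^j)$.

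I expect the main obstacle to be the uniformity: the hypothesis only asserts $f(\mu)=|\mu|^jg(\mu')+o(|\mu|^j)$ and it must be read as uniform in $\mu'=\mu/|\mu|$ over the closed subsector, since otherwise the radial limits need not even be continuous in $\theta$ and the normal-families step can fail at the edges. In the setting of the paper this uniformity is automatic, because all the expansions in Lemmas \ref{lem-2.4}–\ref{lem-2.6} come with remainder estimates that are uniform in closed subsectors of $\{|\arg\mu|<\tfrac12\pi\}$; I would state the lemma with that uniform reading and then the argument above goes through verbatim. A secondary, purely bookkeeping point is that $j$ may be non-integral (it is $j-|\beta|$ or $\alpha-1+j-|\beta|-k$ in the applications), so one must fix a branch of $\mu^{-j}$ on the simply connected sector at the outset; this causes no trouble since the sector omits the negative real axis.
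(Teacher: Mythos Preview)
Your overall strategy---reduce to showing that the bounded holomorphic function $F(\mu)=\mu^{-j}f(\mu)$ has a constant limit at infinity---is sound, but the step where you deduce that $F_\infty$ is constant is flawed. From $F_n(w)=F_{n-1}(w+1)$ you get $F_\infty(w+1)=F_\infty(w)$, but periodicity plus boundedness on a \emph{strip} does not force constancy: the map $\zeta=e^{2\pi i w}$ sends the strip to an annulus, and bounded holomorphic functions on an annulus are abundant, so the Liouville step you invoke does not apply. The fix is in fact simpler than what you wrote. With the uniform reading of the $o(|\mu|^j)$ that you correctly insist on, one has $F_n(w)\to\Psi(\operatorname{Im}w)$ pointwise for \emph{every} $w$ in the strip (not just on the real axis); hence any Montel limit $F_\infty$ is a holomorphic function equal to $\Psi(\operatorname{Im}w)$, and a holomorphic function depending only on $\operatorname{Im}w$ is constant by the Cauchy--Riemann equations. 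That constant is $\Psi(0)=g(1)$, and the conclusion follows.

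The paper's argument is entirely different and much shorter. It uses Morera's theorem directly: for any closed curve $C\subset\mathcal{S}$ and any $t>0$, Cauchy's theorem gives $\int_{tC}f(\mu)\,d\mu=0$; substituting the hypothesis and changing variables $\mu=tz$ (so that $(tz)'=z'$ for $t>0$) yields
\[
0=t^{-j-1}\int_{tC}f(\mu)\,d\mu=\int_C\bigl(|z|^jg(z')+o(1)\bigr)\,dz\,.
\]
Letting $t\to\infty$ kills the $o(1)$ term, so $\int_C|z|^jg(z')\,dz=0$ for every closed $C$, and Morera's theorem gives that $z\mapsto|z|^jg(z')$ is holomorphic on $\mathcal{S}$. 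Since it agrees with $z^jg(1)$ on the positive real axis, the identity theorem yields $|z|^jg(z')=z^jg(1)$ throughout $\mathcal{S}$, which is the assertion. Your normal-families route, once repaired, works and is conceptually natural, but the paper's scaling-plus-Morera trick is essentially a two-line proof that avoids Montel, the conformal change of variables, and the constancy discussion altogether.
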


\begin{proof} Let $C$ be any closed curve in the sector $\mathcal{S}$. Then for $t>0$,
\begin{eqnarray*}
0&=&t^{-j-1}\int_{tC}f(\mu)d\mu=t^{-j-1}\int_{tC}\left\{|\mu|^jg(\mu^\prime)+{o}(|\mu|^j)\right\}d\mu\\
&=&\int_C\left\{|z|^jg(z^\prime)+{{o}(1)}\right\}dz\,.
\end{eqnarray*}
Let $t\rightarrow\infty$. By Morera's theorem, $|z|^jg(z^\prime)$ is holomorphic in $\mathcal{S}$
so it equals $z^jg(1)$, the holomorphic extension of its value on $\{z>0\}$.\end{proof}

\subsection{The case of a manifold $M$}\label{sect-2.3}
The proof of Theorem \ref{thm-2.1} now follows by standard
arguments \cite{Se69a, Gru96} involving a parametrix $P_N$ on the
manifold $M$ constructed from the Euclidean parametrices as in
Lemma \ref{lem-2.6}. In showing that
\begin{equation}\label{eqn-2.p}
\langle P_N\phi-(D_B+\mu^2)^{-1}\phi,{\rho}\rangle=O(\mu^{-K})
\end{equation}
for large $K$, we need to deal with the singularity $r^{-\alpha}$ in the specific heat. To this end, let
$R_N(y,r,\tilde y,s,\mu)$ be the kernel of $(D_B+\mu^2)^{-1}-P_N$. Then $R_N$ and its first
derivatives are $O(\mu^{-k})$ for large $K$. Moreover, by construction, the kernel of $P_N$ is zero when
$r=0$ so the same is true of $R_N$. It follows that $R_N$ is $O(r\mu^{-K})$ for large $K$ and Equation
(\ref{eqn-2.p}) follows.

\section{Heat content asymptotics for Dirichlet boundary conditions}\label{sect-3}

We adopt the notation of Theorem \ref{thm-1.4} throughout this section. Let $B=B_{\mathcal{D}}$ define Dirichlet boundary conditions. We
begin by using dimensional analysis to express the invariants
$\beta_{k,\alpha}^{\partial M}$ in terms of a Weyl basis of invariants which is formed by contracting indices.

\begin{lemma}\label{lem-3.1}
There exist universal constants $\varepsilon_\alpha^i$  so
that:
\medbreak$
\int_{\partial M}\beta_{0,\alpha}^{\partial M}(\phi,\rho,D,{B_{\mathcal{D}}})dy=\int_{\partial
M}\varepsilon_\alpha^0\langle\phi_0,\rho_0\rangle dy$,
\smallbreak$
\int_{\partial M}\beta_{1,\alpha}^{\partial M}(\phi,\rho,D,{B_{\mathcal{D}}})dy=\int_{\partial M}\left\{
\varepsilon_\alpha^1\langle\phi_1,\rho_0\rangle+\varepsilon_\alpha^2
\langle L_{aa}\phi_0,\rho_0\rangle+\varepsilon^3_\alpha\langle\phi_0,\rho_1\rangle\right\}$,
\smallbreak$
\int_{\partial M}\beta_{2,\alpha}^{\partial M}(\phi,\rho,D,{B_{\mathcal{D}}})dy
    =\int_{\partial M}\{\varepsilon_\alpha^4\langle\phi_2,\rho_0\rangle+\varepsilon_\alpha^5\langle L_{aa}\phi_1,\rho_0\rangle
    +\varepsilon_\alpha^6\langle E\phi_0,\rho_0\rangle$
\smallbreak$
\quad+\varepsilon_\alpha^7\langle\phi_0,\rho_2\rangle
   +\varepsilon_\alpha^8\langle L_{aa}\phi_0,\rho_1\rangle
    +\varepsilon_\alpha^9\langle\operatorname{Ric}_{mm}\phi_0,\rho_0\rangle
+\varepsilon_\alpha^{10}\langle L_{aa}L_{bb}\phi_0,\rho_0\rangle$
\smallbreak$
\quad+\varepsilon_\alpha^{11}\langle
L_{ab}L_{ab}\phi_0,\rho_0\rangle+\varepsilon_\alpha^{12}\langle \phi_{0;a},\rho_{0;a}\rangle
   +\varepsilon_\alpha^{13}\langle\tau\phi_0,\rho_0\rangle+\varepsilon_\alpha^{14}\langle\phi_1,\rho_1\rangle\}dy$,\\
    where $\tau$ is the scalar curvature.
\end{lemma}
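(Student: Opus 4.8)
The plan is to establish Lemma \ref{lem-3.1} by a dimensional-analysis argument combined with Theorem \ref{thm-1.4}(3). First I would invoke Theorem \ref{thm-1.4}(3), which already tells us that $\beta_{k,\alpha}^{\partial M}(\phi,\rho,D,B_{\mathcal{D}})$ decomposes as $\sum_{i+j\le k}\beta_{k,\alpha,i,j}^{\partial M}(\phi_i,\rho_j,D,B_{\mathcal{D}})$, where each $\beta_{k,\alpha,i,j}^{\partial M}$ is a natural bilinear tangential differential operator, holomorphic in $\alpha$ for $\alpha\ne1$, applied to the boundary jets $\phi_i$ and $\rho_j$. The task is then to pin down the exact form of these operators after integration over $\partial M$. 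The essential tool is the homogeneity/scaling property: if one rescales the metric $g\mapsto c^2 g$ (and keeps track of how $\phi$, which carries the weight $r^{-\alpha}$, and $\rho$ rescale), then $\beta(\phi,\rho,D,B_{\mathcal{D}})(t)$ is invariant under $t\mapsto c^2 t$, while $dy$, the $\phi_i$, the $\rho_j$, $L$, $E$, $\operatorname{Ric}$, $\tau$, and the covariant derivatives all scale in prescribed ways. Matching the power $t^{(1+k-\alpha)/2}$ in the expansion of Theorem \ref{thm-1.4}(1) forces $\int_{\partial M}\beta_{k,\alpha}^{\partial M}dy$ to be a sum of terms, each built from a local invariant of a fixed weight $k$ (in the sense that it is a monomial in $L$, $E$, $\operatorname{Ric}$, $\tau$, tangential covariant derivatives, and the jets $\phi_i,\rho_j$, whose total scaling weight is exactly $k$ more than that of $\langle\phi_0,\rho_0\rangle$).

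Next I would enumerate, for each of $k=0,1,2$, the full list of local geometric invariants of the correct weight that are linear in the $\phi$-data, linear in the $\rho$-data, and invariant under the orthogonal group acting on the frame $\{e_1,\dots,e_{m-1}\}$ of $T\partial M$ — i.e., a Weyl basis obtained by contracting all indices in pairs. For $k=0$ the only such invariant is $\langle\phi_0,\rho_0\rangle$, giving the single coefficient $\varepsilon_\alpha^0$. For $k=1$ the weight-one invariants are $\langle\phi_1,\rho_0\rangle$, $\langle L_{aa}\phi_0,\rho_0\rangle$, and $\langle\phi_0,\rho_1\rangle$; note that a term like $\langle\phi_{0:a},\rho_0\rangle$ cannot appear after integration over $\partial M$ because it would have a free index (and, being a total divergence of a boundary vector field, integrates to zero). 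For $k=2$ one lists all weight-two invariants: $\langle\phi_2,\rho_0\rangle$, $\langle L_{aa}\phi_1,\rho_0\rangle$, $\langle E\phi_0,\rho_0\rangle$, $\langle\phi_0,\rho_2\rangle$, $\langle L_{aa}\phi_0,\rho_1\rangle$, $\langle\operatorname{Ric}_{mm}\phi_0,\rho_0\rangle$, $\langle L_{aa}L_{bb}\phi_0,\rho_0\rangle$, $\langle L_{ab}L_{ab}\phi_0,\rho_0\rangle$, $\langle\phi_{0;a},\rho_{0;a}\rangle$, $\langle\tau\phi_0,\rho_0\rangle$, and $\langle\phi_1,\rho_1\rangle$; all other candidates (such as $\langle\phi_{0;aa},\rho_0\rangle$ or $\langle L_{aa}\phi_{0;b},\rho_0\rangle_{\,;b}$) are removed either because they equal one of these modulo a total boundary divergence (via integration by parts, using that the frame is adapted and the Weitzenböck/commutation formulas relate $\phi_{0;aa}$ to $\phi_{0;a;a}$ plus curvature-times-$\phi_0$ terms absorbed into the $\operatorname{Ric}$, $\tau$ entries), or because they carry a free index. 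Each surviving invariant gets a universal constant $\varepsilon_\alpha^i$, $i=0,\dots,14$, which by Theorem \ref{thm-1.4}(3) depends only on $\alpha$ (and is holomorphic for $\alpha\ne1$), proving the asserted form.

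The main obstacle I anticipate is \emph{completeness and non-redundancy of the invariant basis}: one must be careful that the spanning set for weight $2$ is genuinely minimal after integration over $\partial M$, since several a priori distinct expressions coincide modulo total divergences or are related by the Gauss equation, the definition of $E$ via Lemma \ref{lem-1.1}, and the relations $(\ref{eqn-1.b})$, $(\ref{eqn-1.c})$ linking tangential and normal jets of $\phi$ and $\rho$. Correctly choosing which redundancies to eliminate is exactly what fixes the listed $14$ invariants rather than some larger set; getting this bookkeeping right (in particular noting that $\langle\phi_{0;a},\rho_{0;a}\rangle$ is the only ``derivative-derivative'' term that survives, the $L_{ab}\phi_{0;a}\rho_{0;b}$-type terms being reducible) is the crux. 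Once the basis is fixed, the statement is immediate; the actual evaluation of the constants $\varepsilon_\alpha^i$ is deferred to later sections (using the half-line computation of Section \ref{sect-2.1}, the functorial method, and the single coefficient from \cite{Be07}), so no computation is needed here beyond the invariant-theoretic normal form.
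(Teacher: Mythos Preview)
Your approach is essentially the paper's: rescale the metric $g\mapsto c^2g$, track how all the data transforms, use $\beta(\phi,\rho,c^{-2}D,B_{\mathcal{D}})(t)=c^m\beta(\phi,\rho,D,B_{\mathcal{D}})(c^{-2}t)$, equate powers of $t$ to obtain the homogeneity relation for $\beta_{k,\alpha,i,j}^{\partial M}$, and then invoke Weyl's invariant theory to produce the spanning set modulo boundary divergences (the paper records exactly your integration-by-parts reduction as Equation (\ref{eqn-3.b})).

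There is, however, a genuine gap in your outline: the case $\alpha=1$. Your matching of powers of $t$ relies on Theorem \ref{thm-1.4}(1), which is valid only for $\alpha\ne1$. When $\alpha=1$ the expansion acquires $\ln(t)$ terms (Theorem \ref{thm-1.4}(2)), and more subtly the regularized interior integral no longer scales homogeneously: one has $\mathcal{I}_{\operatorname{Reg},c}(\phi,\rho)=c^m\mathcal{I}_{\operatorname{Reg}}(\phi,\rho)+c^m\ln(c)\int_{\partial M}\langle\phi_0,\rho_0\rangle\,dy$, so interior and boundary contributions mix under rescaling. The paper treats this explicitly by writing out the full rescaled expansion, separating the $\ln(t)$ and non-$\ln(t)$ pieces, and checking that the $\ln(c)$ defect from $\mathcal{I}_{\operatorname{Reg},c}$ is exactly cancelled by the shift $\ln(t)\mapsto\ln(c^{-2}t)$ in the $\check\beta_k$ terms; only then does one recover the same homogeneity relation for $\beta_{k,1,i,j}^{\partial M}$ (and, as a byproduct, Theorem \ref{thm-1.4}(4)). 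A smaller related point: even for $\alpha\ne1$, your ``matching powers of $t$'' step to decouple interior from boundary terms requires $\alpha\notin\mathbb{Z}$, since otherwise $t^{(1+k-\alpha)/2}$ can coincide with an integer power $t^n$; the paper first argues for $\alpha\notin\mathbb{Z}$ and then continues analytically using the holomorphy in Theorem \ref{thm-1.4}(3). Once you add these two observations your argument is complete and coincides with the paper's.
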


\begin{proof} Let $c>0$. We consider a metric $g_c:=c^2g$. We then have for $\alpha\not=1$ (see the paragraph after \eqref{eqn-3.b} for the case $\alpha=1$):
\begin{equation}\label{eqn-3.a}
\begin{array}{lll}
dx_c:=c^mdx,&dy_c:=c^{m-1}dy,&D_c:=c^{-2}D,\\
r_c:=cr,&\partial_{r_c}=c^{-1}\partial_r,&\phi_{i,c}=c^{\alpha-i}\phi_i,\\
{\mathcal{I}_{\operatorname{Reg},c}}=c^m{\mathcal{I}_{\operatorname{Reg}}},&\rho_{i,c}=c^{-i}\rho_i\,.
\end{array}\end{equation}
We then compute:
\begin{eqnarray*}
&&\beta(\phi,\rho,D_c,B_{\mathcal{D}})(t)=\int_M\langle e^{-tD_{c,B}}\phi,\rho\rangle dx_c
\\&=&c^m\int_M\langle e^{c^{-2}tD_B}\phi,\rho\rangle dx
=c^m\beta(\phi,\rho,D,{B_{\mathcal{D}}})(c^{-2}t)\,.
\end{eqnarray*}
{We take $\alpha\notin\mathbb{Z}$ and then continue analytically to the integer values with $\alpha\ne1$. The interior and
boundary terms then decouple so we may conclude:}
\begin{eqnarray*}
&&\sum_{k=0}^\infty t^{(1+k-\alpha)/2}\int_{\partial M}
    \sum_{i+j\le k}\beta_{k,\alpha,i,j}^{\partial M}(c^{\alpha-i}\phi_i,c^{-j}\rho_j,c^{-2}D,{B_{\mathcal{D}}})c^{m-1}dy\\
&=&c^m\sum_{k=0}^\infty c^{\alpha-k-1}t^{(1+k-\alpha)/2}\int_{\partial M}\sum_{i+j\le k}
    \beta_{k,\alpha,i,j}^{\partial M}(\phi_i,\rho_j,D,{B_{\mathcal{D}}})dy\,.
\end{eqnarray*}
Equating powers of $t$ in the asymptotic series and simplifying yields:
$$
\int_{\partial M}\beta_{k,\alpha,i,j}^{\partial M}(\phi_i,\rho_j,c^{-2}D,B_{\mathcal{D}})dy
=c^{i+j-k}\int_{\partial M}\beta_{k,\alpha,i,j}^{\partial M}(\phi_i,\rho_j,D,B_{\mathcal{D}})dy\,.
$$

Studying relations of this kind is by now quite standard and we refer to \cite{G04} for further details.
We conclude that
$\beta_{k,\alpha,i,j}^{\partial M}$ is homogeneous of weighted degree
$k$ in the jets of $\phi_i$ and of $\rho_j$. We now use Weyl's theory of invariants to write down a
spanning set for the invariants which arise in this way. There is some indeterminacy in these invariants as we can
always integrate by parts to eliminate tangential divergence terms. For example, we have
\begin{equation}\label{eqn-3.b}
\int_{\partial M}\langle\phi_{0:aa},\rho_0\rangle dy=\int_{\partial M}\langle\phi_0,\rho_{0:aa}\rangle dy
=-\int_{\partial M}\langle\phi_{0:a},\rho_{:a}\rangle dy\,.
\end{equation}
For this reason, we have eliminated the first two invariants from the formula in Lemma \ref{lem-3.1}. This completes the proof of Lemma
\ref{lem-3.1} for $\alpha\ne1$.

If $\alpha=1$, the argument is different. In this instance, the regularizing term is given by:
\begin{eqnarray*}
&&\ln(\varepsilon_c)\int_{\partial M}\langle\phi_{0,c},\rho_{0,c}\rangle dy_c
=\ln(c\varepsilon)\int_{\partial M}c\langle\phi_0,\rho_0\rangle c^{m-1}dy\\
&=&c^m\left\{\ln(\varepsilon)+\ln(c)\right\}\int_{\partial M}\langle\phi_0,\rho_0\rangle dy\,.
\end{eqnarray*}
This then yields the modified relation:
\begin{equation}\label{eqn-3.c}
\mathcal{I}_{\operatorname{Reg},c}(\phi,\rho)=c^m\mathcal{I}_{\operatorname{Reg}}(\phi,\rho)+\ln(c)c^m\int_{\partial
M}\langle\phi_0,\rho_0\rangle dy\,.
\end{equation}

Recall the notation of Theorem \ref{thm-1.4}. One has that:
\begin{eqnarray*}
&&\beta(\phi,\rho,c^{-2}D,B_{\mathcal{D}})(t)
\sim\sum_{n=0}^\infty\frac{(-t)^n}{n!}\mathcal{I}_{\operatorname{Reg},c}(\phi,c^{-2n}\tilde D^n\rho)\\
&+&\sum_{k=0}^\infty t^{k/2}\ln(t)\int_{\partial
M}\check\beta_{k}(\phi,\rho,c^{-2}D,B_{\mathcal{D}})c^{m-1}dy\\
&+&\sum_{k=0}^\infty\sum_{i+j\le k} t^{k/2}\int_{\partial M}\beta_{k,1,i,j}^{\partial
M}(c^{1-i}\phi_i,c^{-j}\rho_j,c^{-2}D,B_{\mathcal{D}}) c^{m-1}dy\\
&=&c^m\beta(\phi,\rho,D,B_{\mathcal{D}})(c^{-2}t)
\sim\sum_{n=0}^\infty c^{m-2n}\frac{(-t)^n}{n!}\mathcal{I}_{\operatorname{Reg}}(\phi,\tilde D^n\rho)\\
&+&\sum_{k=0}^\infty c^{m-k}t^{k/2}(\ln(t)-2\ln(c))\int_{\partial
M}\check\beta_{k}(\phi,\rho,D,B_{\mathcal{D}})dy\\
&+&\sum_{ k=0}^\infty\sum_{i+j\le k} c^{m-k}t^{k/2}\int_{\partial M}\beta_{k,1,i,j}^{\partial M}(\phi_i,\rho_j,D,B_{\mathcal{D}})
dy\,.
\end{eqnarray*}
Equating terms in the asymptotic expansion then yields:
\begin{eqnarray}\label{eqn-3.d}
&&\int_{\partial M}\beta_{k,1,i,j}^{\partial M}(\phi_i,\rho_j,c^{-2}D,B_{\mathcal{D}})dy
=c^{i+j-k}\int_{\partial M}\beta_{k,1,i,j}^{\partial M}(\phi_i,\rho_j,D,B_{\mathcal{D}})dy,\\
&&\frac{(-1)^n}{n!}\mathcal{I}_{\operatorname{Reg,c}}(\phi,\tilde D^n\rho)=c^m\left\{
\mathcal{I}_{\operatorname{Reg}}(\phi,\tilde D^n\rho)
-2\ln(c)\int_{\partial M}\tilde\beta_{2n}(\phi,\rho,D,B_{\mathcal{D}})dy\right\},\nonumber\\
&&0=-2\ln(c)\int_{\partial M}\tilde\beta_{2n+1}(\phi,\rho,D,B_{\mathcal{D}})dy
\,.\nonumber
\end{eqnarray}
The weighted homogeneity of $\beta_{k,1,i,j}^{\partial M}$ now follows and completes the proof of Lemma \ref{lem-3.1}.
Theorem \ref{thm-1.4} (4) follows from Equations (\ref{eqn-3.c}) and (\ref{eqn-3.d})
\end{proof}

We shall prove Assertions (1) and (2) of Theorem \ref{thm-1.5} by evaluating the normalizing constants in Lemma \ref{lem-3.1}. We begin by
establishing some product formulas:

\begin{lemma}\label{lem-3.2}
 Suppose that $M=M_1\times M_2$, that $g_M=g_{M_1}+g_{M_2}$, that $\partial {M_1}=\emptyset$, and
that
$D_M=D_{M_1}+D_{M_2}$ where $D_{M_1}$ and $D_{M_2}$ are scalar operators of Laplace type on ${M_1}$ and on ${M_2}$, respectively. Suppose
that
$\phi_M=\phi_{M_1}\phi_{M_2}$ and $\rho_M=\rho_{M_1}\rho_{M_2}$ decompose similarly. Then
\begin{enumerate}
\item
$\beta(\phi_M,\rho_M,D_M,B_{\mathcal{D}})(t)=\beta(\phi_{M_1},\rho_{M_1},D_{M_1})(t)\cdot\beta(\phi_{M_2},\rho_{M_2},D_{M_2},B_{\mathcal{D}})(t)$.
\item $\int_{\partial M}\beta_{k,\alpha}^{\partial M}(\phi_M,\rho_M,D_M,B_{\mathcal{D}})dy$\smallbreak$=
\sum_{2n+j=k}\frac{(-1)^n}{n!}\int_{M_1}\langle\phi_{M_1},(\tilde D_{M_1})^n\rho_{M_1}\rangle dx_{M_1}$\smallbreak$\quad\times
\int_{\partial M_2}\beta_{j,\alpha}^{\partial M_2}(\phi_{M_2},\rho_{M_2},D_{M_2},B_{\mathcal{D}})dy_{M_2}$.
\item The universal constants $\varepsilon_\alpha^i$ are dimension free.
\item $\varepsilon_\alpha^6=\varepsilon_\alpha^0$, $\varepsilon_\alpha^{13}=0$, and $\varepsilon_\alpha^{12}=-\varepsilon_\alpha^0$.
\end{enumerate}\end{lemma}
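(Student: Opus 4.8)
The plan is to exploit the multiplicative structure of the heat semigroup and then read off the consequences for the invariants termwise. First I would establish Assertion (1): since $D_M=D_{M_1}+D_{M_2}$ act on independent variables and the operators commute, $e^{-tD_{M,B_{\mathcal D}}}=e^{-tD_{M_1}}\otimes e^{-tD_{M_2,B_{\mathcal D}}}$ (there is no boundary condition on $M_1$ as $\partial M_1=\emptyset$). Hence the kernel $p_{M,B}(x,\tilde x;t)$ factors as $p_{M_1}(x_1,\tilde x_1;t)\,p_{M_2,B}(x_2,\tilde x_2;t)$, and because $\phi_M$, $\rho_M$, and the Riemannian measure $dx_M=dx_{M_1}\,dx_{M_2}$ all split as products, the integral defining $\beta$ factors. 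One must note that $M_2$ being the only factor with boundary means $\mathcal{C}_\epsilon$ sits inside $M_1\times\mathcal{C}_\epsilon^{(2)}$, and the radial blowup $r^{-\alpha}$ lives entirely in the $M_2$ factor; this makes $\phi_{M_1}$, $\rho_{M_1}$ smooth and integrable (so $\beta(\phi_{M_1},\rho_{M_1},D_{M_1})(t)$ has a convergent Taylor-type expansion in $t$ with no boundary contribution), while all the singular analysis is confined to $M_2$.

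Assertion (2) is then obtained by inserting the two asymptotic expansions from Theorem \ref{thm-1.4} into the product of Assertion (1) and collecting powers of $t$. The $M_1$ factor contributes only the interior series $\sum_n\frac{(-t)^n}{n!}\int_{M_1}\langle\phi_{M_1},(\tilde D_{M_1})^n\rho_{M_1}\rangle dx_{M_1}$ (its $\mathcal{I}_{\operatorname{Reg}}$ is just the ordinary integral since $\operatorname{Re}(\alpha)$ plays no role there), and the $M_2$ factor contributes both its interior series and the boundary series with half-integer-plus-$\alpha$ powers $t^{(1+j-\alpha)/2}$. Multiplying, the coefficient of $t^{(1+k-\alpha)/2}$ on the product side receives contributions exactly from pairs with $2n+j=k$, giving the stated convolution formula. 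Here I would use that $\tilde D_M=\tilde D_{M_1}+\tilde D_{M_2}$ so that $\mathcal{I}_{\operatorname{Reg}}(\phi_M,\tilde D_M^n\rho_M)$ expands binomially into products of $\int_{M_1}\langle\phi_{M_1},\tilde D_{M_1}^a\rho_{M_1}\rangle$ with $\mathcal{I}_{\operatorname{Reg}}(\phi_{M_2},\tilde D_{M_2}^b\rho_{M_2})$; matching this against the product of the two interior series confirms consistency.

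For Assertion (3), I would run the scaling argument of Lemma \ref{lem-3.1} but now comparing $\beta_{k,\alpha}^{\partial M}$ across dimensions. Concretely, take $M_2$ fixed and let $M_1$ be a flat torus of varying dimension; Assertion (2) expresses $\int_{\partial M}\beta_{k,\alpha}^{\partial M}$ purely in terms of $M_2$-data times volume-type integrals over $M_1$, with no appearance of $\dim M_1$ except through $\int_{M_1}1\,dx_{M_1}$ and the purely $M_2$-intrinsic curvature invariants. Since the universal constants $\varepsilon_\alpha^i$ are defined as the coefficients in the Weyl-basis expansion of Lemma \ref{lem-3.1}, and that basis (e.g. $L_{aa}$, $L_{ab}L_{ab}$, $\operatorname{Ric}_{mm}$, $\tau$, jets of $\phi_i$, $\rho_j$) is the same in every dimension, the constants cannot depend on $m$: otherwise the product formula (2) would fail for some pair of manifolds. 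This is the standard "dimension-free" argument and I expect it to be essentially bookkeeping once (1) and (2) are in hand.

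Assertion (4) is the delicate part and the main obstacle. The idea is to choose $M=M_1\times M_2$ with $M_1$ a flat torus (or circle) and $M_2$ a half-line/interval, so that $\tau_M$, $\operatorname{Ric}_{mm}$, and $L_{ab}$ all vanish, while the endomorphism $E$ of $D_M$ can be made nonzero by choosing $D_{M_1}=\Delta_{M_1}-E_1$ on the trivial line bundle. Then on the one hand Assertion (2) computes $\int_{\partial M}\beta_{2,\alpha}^{\partial M}$ from the $j=0$ and $j=2$ pieces of $M_2$, where the $j=0,n=1$ term produces exactly $-\,t\,\int_{M_1}\langle\phi_{M_1},\tilde D_{M_1}\rho_{M_1}\rangle\cdot(\text{leading }M_2\text{ boundary coeff})$; unwinding $\tilde D_{M_1}\rho_{M_1}=-\Delta_{M_1}\rho_{M_1}-\tilde E_1\rho_{M_1}$ and matching the $\langle E\phi_0,\rho_0\rangle$-type terms forces $\varepsilon_\alpha^6=\varepsilon_\alpha^0$. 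On the other hand, choosing instead $\rho_{M_1}$ non-constant on the torus, the $-\Delta_{M_1}\rho_{M_1}$ piece of $\tilde D_{M_1}\rho_{M_1}$, after integration by parts on $M_1$, generates the tangential-derivative pairing; comparing with the Weyl-basis terms $\varepsilon_\alpha^{12}\langle\phi_{0;a},\rho_{0;a}\rangle$ and $\varepsilon_\alpha^{13}\langle\tau\phi_0,\rho_0\rangle$ and using that $\tau_M$ still vanishes while the torus Laplacian contributes only a $\langle\phi_{0;a},\rho_{0;a}\rangle$-type term (no curvature), one gets $\varepsilon_\alpha^{12}=-\varepsilon_\alpha^0$ and, separately, $\varepsilon_\alpha^{13}=0$ by testing on a manifold where $\tau\neq0$ but the other invariants are arranged to vanish or be known. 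The hard part is being careful that the product decomposition really does split the $k=2$ coefficient cleanly and that the integration-by-parts identity \eqref{eqn-3.b} is used consistently so that the comparison is against a genuinely reduced Weyl basis; I would organize the computation so that a single well-chosen product example (torus $\times$ half-line with a potential) pins down all three relations simultaneously.
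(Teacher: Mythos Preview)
Your proposal is correct and follows essentially the same route as the paper: semigroup factorization for (1), multiplying the two asymptotic expansions for (2), the circle/torus factor for (3), and a product with a flat interval for (4). One small simplification worth noting for Assertion (4): the paper leaves $M_1$ an \emph{arbitrary} closed manifold rather than a flat torus, so that $\tau_M=\tau_{M_1}$ is a free parameter alongside $E$ and the tangential derivatives; then a single application of Assertion (2) with $M_2=[0,1]$, $D_{M_2}=-\partial_r^2$, $\phi_{M_2}=r^{-\alpha}$, $\rho_{M_2}=1$ (for which $\beta_j^{\partial M_2}$ vanishes for $j>0$) gives
\[
\int_{\partial M}\beta_{2,\alpha}^{\partial M}(\phi_M,\rho_M,D_M,B_{\mathcal D})\,dy
=\varepsilon_\alpha^0\int_{M_1}\langle\phi_{M_1},\rho_{M_1;aa}+\tilde E\rho_{M_1}\rangle\,dx_{M_1},
\]
and comparing with Lemma~\ref{lem-3.1} yields $\varepsilon_\alpha^6=\varepsilon_\alpha^0$, $\varepsilon_\alpha^{12}=-\varepsilon_\alpha^0$, and $\varepsilon_\alpha^{13}=0$ simultaneously, avoiding the separate $\tau\ne0$ test you had in mind.
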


\begin{proof} Assertion (1) follows from the identity $e^{-tD_{M,B}}=e^{-tD_{M_1}}e^{-tD_{{M_2},B}}$ and Assertion (2) follows from
Assertion (1). If we take $M_1=S^1$, $D_{M_1}=-\partial_\theta^2$, $\phi_{M_1}=1$, and $\rho_{M_1}=1$, we have that
$\beta(\phi_{M_1},\rho_{M_1},D_{M_1})(t)=2\pi$. This then yields the identity
$$\int_{\partial M}\beta_{k,\alpha}^{\partial M}(\phi_{M_2},\rho_{M_2},D,B_{\mathcal{D}})dy
=2\pi\int_{\partial M_2}\beta_{k,\alpha}^{\partial
M_2}(\phi_{M_2},\rho_{M_2},D_{M_2},B_{\mathcal{D}})dy_2\,.$$ Assertion (3) now follows.

We take $M_2=[0,1]$ and
$D_2=-\partial_r^2$. We take 
\begin{eqnarray*}
&&\phi_{M_2}=\rho_{M_2}=0\quad\text{near}\quad r=1,\\
&&\rho_{M_2}=1\quad\text{and}\quad\phi_{M_2}=r^{-\alpha}\quad\text{near}\quad r=0\,.
\end{eqnarray*}
Since the structures on $M_2$ are flat,
$$
\beta_k^{\partial M_2}(\phi_{M_2},\rho_{M_2},D_{M_2},B_{\mathcal{D}})(r)=
\left\{\begin{array}{lll}
0&\text{if}&r=1\quad\text{and}\quad k\ge0,\\
0&\text{if}&r=0\quad\text{and}\quad k>0,\\
\varepsilon_\alpha^0&\text{if}&r=0\quad\text{and}\quad k=0.
\end{array}\right.$$
As the second fundamental form vanishes, the distinction between `;' and `:' disapears and we
may use Eauation (\ref{eqn-1.c}) to see that
$\tilde D_1\rho_{M_1}=-(\rho_{;aa}+\tilde E\rho)$.
Theorem \ref{thm-1.4} then implies
$$
\beta_{2}(\phi_{M_1},\rho_{M_1},D_{M_1})=\int_{\partial M}\langle\phi_{M_1},\rho_{M_1;aa}+\tilde
E\rho_{M_1}\rangle dx_1\,.
$$
We may therefore use Assertion (2) to see
\begin{eqnarray*}
&&\int_{\partial M}\beta_{2,\alpha}^{\partial M}(\phi_M,\rho_M,D_M,B_{\mathcal{D}})dy
=\varepsilon_\alpha^0\int_{M_1}\langle\phi_{M_1},\rho_{M_1;aa}+\tilde E\rho_{M_1}\rangle dx_1
\end{eqnarray*}
Assertion
(4) now follows from this identity.
\end{proof}

Next, we evaluate the universal constants $\varepsilon_\alpha^0$.

\begin{lemma}\label{lem-3.3}
\ \begin{enumerate}
\item If $\alpha\ne1$, then $\varepsilon_\alpha^0=\pi^{-1/2}2^{1-\alpha}\Gamma\left(\frac{2-\alpha}2\right)(\alpha-1)^{-1}$.
\item Let $\gamma$ be Euler's constant. Then $\varepsilon_1^0=\frac\gamma2$.
\end{enumerate}\end{lemma}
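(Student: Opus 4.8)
The plan is to read off $\varepsilon_\alpha^0$ from the one-dimensional computation already carried out in Section \ref{sect-2.1}, using the product formula of Lemma \ref{lem-3.2} to transfer that computation into the geometric normalization appearing in Lemma \ref{lem-3.1}. The key point is that $\varepsilon_\alpha^0$ is, by Lemma \ref{lem-3.2}(3), dimension-free, so it suffices to evaluate it on the single test geometry $M_2=[0,1]$ with $D_{M_2}=-\partial_r^2$, $\phi_{M_2}=r^{-\alpha}$ and $\rho_{M_2}=1$ near $r=0$ (both vanishing near $r=1$). For this geometry, every boundary invariant other than $\langle\phi_0,\rho_0\rangle$ vanishes since the second fundamental form, the curvature, and $E$ all vanish, so Lemma \ref{lem-3.1} reduces to $\int_{\partial M_2}\beta_{0,\alpha}^{\partial M_2}\,dy=\varepsilon_\alpha^0\langle\phi_0,\rho_0\rangle=\varepsilon_\alpha^0$, because here $\phi_0=\phi_1=\cdots$ in the sense that $(r^\alpha\phi)|_{r=0}=1$ and $\rho_0=1$.

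First I would recall Equation (\ref{eqn-2.d1}): with exactly this $\phi$ and $\rho$ on $[0,1]$ with Dirichlet conditions at both ends, the $r=1$ endpoint contributes only $O(t^\infty)$, and for $\alpha\ne1$ one has
$$
\beta(\phi,\rho,-\partial_r^2,B_{\mathcal D})(t)=\mathcal{I}_{\operatorname{Reg}}(\phi,\rho)-\frac{\Gamma(1-\alpha)}{\Gamma\!\left(\frac{3-\alpha}{2}\right)}+O(t^\infty)\,.
$$
Comparing with the asymptotic expansion of Theorem \ref{thm-1.4}(1), whose leading boundary term is $t^{(1-\alpha)/2}\int_{\partial M}\beta_{0,\alpha}^{\partial M}\,dy$, and noting there is no boundary term of order $t^0$ here (the $t^0$ coefficient being purely the interior contribution $\mathcal{I}_{\operatorname{Reg}}$), one matches the $t^{(1-\alpha)/2}$-coefficient: the displayed constant $-\Gamma(1-\alpha)/\Gamma(\tfrac{3-\alpha}{2})$ must itself be $t$-independent, i.e.\ it is exactly $\int_{\partial M_2}\beta_{0,\alpha}^{\partial M_2}\,dy=\varepsilon_\alpha^0$. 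Then Legendre's duplication formula $\Gamma(1-\alpha)=\pi^{-1/2}2^{-\alpha}\Gamma\!\left(\frac{1-\alpha}{2}\right)\Gamma\!\left(\frac{2-\alpha}{2}\right)$ together with $\Gamma\!\left(\frac{3-\alpha}{2}\right)=\frac{1-\alpha}{2}\Gamma\!\left(\frac{1-\alpha}{2}\right)$ collapses the quotient to $-\pi^{-1/2}2^{1-\alpha}\Gamma\!\left(\frac{2-\alpha}{2}\right)(1-\alpha)^{-1}=\pi^{-1/2}2^{1-\alpha}\Gamma\!\left(\frac{2-\alpha}{2}\right)(\alpha-1)^{-1}$, which is Assertion (1). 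This is precisely the manipulation already sketched in the displayed computation just before Section \ref{sect-2.1}.

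For Assertion (2), I would take $\alpha\to1$. From Equation (\ref{eqn-2.d1}) in the $\alpha=1$ case, $\beta(\phi,\rho,-\partial_r^2,B_{\mathcal D})(t)=\mathcal{I}_{\operatorname{Reg}}(\phi,\rho)-\frac12\ln(t)+\frac12\gamma+O(t^\infty)$; comparing with the $\alpha=1$ expansion of Theorem \ref{thm-1.4}(2), the $t^0$-term is $\mathcal{I}_{\operatorname{Reg}}$ plus $\int_{\partial M}\{\beta_{0,1}^{\partial M}+\ln(t)\check\beta_0^{\partial M}\}dy$; matching the $\ln(t)$ part recovers Theorem \ref{thm-1.4}(4) for $n=0$, and matching the constant part gives $\int_{\partial M_2}\beta_{0,1}^{\partial M_2}\,dy=\frac\gamma2$, i.e.\ $\varepsilon_1^0=\frac\gamma2$. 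Alternatively, and more robustly, one checks that $\pi^{-1/2}2^{1-\alpha}\Gamma\!\left(\frac{2-\alpha}{2}\right)(\alpha-1)^{-1}$ has a removable singularity at $\alpha=1$ once combined with the pole of $\mathcal{I}_{\operatorname{Reg}}$, and its finite part is $\frac12(\,-\psi(1/2)-2\ln 2\,)$; since $\psi(1/2)=-\gamma-2\ln 2$ this equals $\frac\gamma2$. The only mild obstacle is bookkeeping: making sure that in the $[0,1]$ test case the interior term contributes $\mathcal{I}_{\operatorname{Reg}}$ and no boundary term of order $t^0$ survives (so that the constant in (\ref{eqn-2.d1}) is unambiguously $\varepsilon_\alpha^0$), and tracking the sign conventions through the $\alpha\to1$ limit; both are routine given the half-line analysis of Section \ref{sect-2.1}.
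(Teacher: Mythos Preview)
Your proposal is correct and follows essentially the same route as the paper: both read off $\varepsilon_\alpha^0$ directly from the one-dimensional computation recorded in Equation~(\ref{eqn-2.d1}), identifying the coefficient of $t^{(1-\alpha)/2}$ (respectively the constant accompanying $-\tfrac12\ln t$ when $\alpha=1$) with $\varepsilon_\alpha^0$ on the flat test interval, and both invoke Legendre duplication to rewrite $-\Gamma(1-\alpha)/\Gamma(\tfrac{3-\alpha}{2})$ in the form $c_\alpha$. The paper's proof additionally supplies an independent verification of the case $\alpha=1$ by an explicit half-line computation with the Dirichlet heat kernel $u(r;t)=\tfrac{2}{\sqrt\pi}\int_0^{r/2\sqrt t}e^{-s^2}\,ds$, which you omit; your analytic-continuation check via $\psi(1/2)=-\gamma-2\ln 2$ is a pleasant alternative that the paper does not spell out.
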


\begin{proof} The proof follows from \eqref{eqn-2.d1}.
We note that Assertion (1) also follows for $1<\alpha<2$ by the special case calculation in \cite{Be07}. Assertion (1)
then follows for $\alpha\not=1$ by analytic continuation.

To study the case $\alpha=1$ by a special case calculation we let
$M=[0,\infty)$, let $\Theta=1$ on $[0,\varepsilon]$ and with
compact support in $[0,1)$, let $\phi=r^{-1}\Theta(r)$, let
$\rho=1$, let $D=-\partial_r^2$, and let $\gamma$ be Euler's
constant.
\def\grac#1#2{{\textstyle\frac{#1}{#2}}}As is usual, we work dually and compute $\beta(\rho,\phi,D,B_{\mathcal{D}})(t)$. The halfspace
solution of the heat equation with constant initial temperature is given by:
$$u(r;t)=\grac{2}{\sqrt\pi}\int_0^{\frac r{2\sqrt t}}e^{-s^2}ds\,.$$
Consequently, we may compute: \medbreak$\displaystyle
\beta(\rho,\phi,D,B_{\mathcal{D}})(t)=\grac2{\sqrt\pi}\int_0^1\int_0^{\frac
r{2\sqrt t}}e^{-s^2}r^{-1}\Theta(r)dsdr$
\smallbreak\quad$\displaystyle=\frac2{\sqrt\pi}\int_0^1\ln(r)\partial_r\left\{\Theta(r)\int_0^{\frac
r{2\sqrt t}}e^{-s^2}ds\right\}dr$
\smallbreak\quad$\displaystyle=-\frac1{\sqrt{\pi
t}}\int_0^1\ln(r)\Theta(r)e^{-\frac{r^2}{4t}}dt$
$-\displaystyle\frac2{\sqrt\pi}\int_\varepsilon^1\int_0^{\frac
r{2\sqrt t}}e^{-s^2}\ln(r)\Theta^\prime(r)dsdr$
\smallbreak\quad$=B_1+B_2,$\medbreak\noindent where \medbreak
$\displaystyle B_1=-\grac1{\sqrt\pi t}\left\{\int_0^1\ln(r)
e^{-\frac{r^2}{4t}}dr+\int_{\varepsilon}^1
\ln(r)(\Theta(r)-1)e^{-\frac{r^2}{4t}}dr\right\}$
\smallbreak\quad$\displaystyle=-\grac1{\sqrt\pi
t}\int_0^\infty\ln(r)e^{-\frac{r^2}{4t}}dr+O(e^{-\frac{\varepsilon^2}{8t}})$
\smallbreak\quad$\displaystyle=-\grac2{\sqrt\pi}\left\{\int_0^\infty\ln(r)e^{-r^2}dr+\int_0^\infty\ln(2\sqrt
t)e^{-r^2}dr\right\} +O(e^{-\frac{\varepsilon^2}{8t}})$
\smallbreak\quad$\displaystyle=-\grac12\ln(t)+\left\{-\ln(2)-\grac2{\sqrt\pi}\int_0^\infty\ln(r)e^{-r^2}dr\right\}
+O(e^{-\frac{\varepsilon^2}{8t}}),$ \smallbreak $\displaystyle
B_2=-\grac2{\sqrt\pi}\int_{\varepsilon}^1\ln(r)\Theta^\prime(r)\left\{
\int_0^\infty e^{-s^2}ds-\int_{\frac r{2\sqrt t}}^\infty
e^{-s^2}ds\right\}dr$
\smallbreak\quad$\displaystyle=-\int_{\varepsilon}^1\ln(r)\Theta^\prime(r)dr+O(e^{-\frac{\varepsilon^2}{8t}})$
\smallbreak\quad$\displaystyle=-\ln(r)\Theta(r)\bigg|_{\varepsilon}^1+\int_{\varepsilon}^1
r^{-1}\Theta(r)dr +O(e^{-\frac{\varepsilon^2}{8t}})$
\smallbreak\quad$\displaystyle=\ln(\varepsilon)+\int_{\varepsilon}^1
r^{-1}\Theta(r)dr+O(e^{-\frac{\varepsilon^2}{8t}})$.
\medbreak\noindent This then yields the expression
\medbreak$\displaystyle\beta(\rho,\phi,D,B_{\mathcal{D}})(t)$
\medbreak$\quad\displaystyle=\grac12\ln(\grac{\varepsilon^2}t)
-\ln(2)-\grac2{\sqrt\pi}\int_0^\infty\ln(s)e^{-s^2}ds+\int_{\varepsilon}^1
r^{-1}\Theta(r)dr+O(e^{-\frac{\varepsilon^2}{8t}})$.
\medbreak\noindent Since $\phi$ is compactly supported in $[0,1)$,
the heat content  for corresponding problem on the interval
$[0,1]$ is the same as for $[0,\infty)$ up to $O(t^\infty)$.
Assertion (2) now follows.
\end{proof}

We continue our study by index shifting:
\begin{lemma}\label{lem-3.4}
\ \begin{enumerate}
\item Assume $\nabla_{\partial_r}\Phi=0$ on ${\mathcal{C}_\epsilon}$. Set
$\phi:={\chi(r)}\Phi(y)r^{i_0-\gamma}$. Then
$$\textstyle\int_{\partial M}{\beta_{k,\gamma,i_0,j}^{\partial
M}}(\Phi,\rho_j,D,{B_{\mathcal{D}}})dy =\int_{\partial M}{\beta_{k-i_0,\gamma-i_0,0,j}^{\partial
M}}(\Phi,\rho_j,D,{B_{\mathcal{D}}})dy\,.$$
\item If $\alpha\ne1$, then $\varepsilon_\alpha^1=\varepsilon_{\alpha-1}^0$, $\varepsilon_\alpha^4=\varepsilon_{\alpha-2}^0$,
$\varepsilon_\alpha^5=\varepsilon_{\alpha-1}^2$, and $\varepsilon_\alpha^{14}=\varepsilon_{\alpha-1}^3$.
\item $\varepsilon_1^1=-\frac2{\sqrt\pi}$, $\varepsilon_1^4=-1$, $\varepsilon_1^5=\frac12$, and $\varepsilon_1^{14}=0$.
\end{enumerate}\end{lemma}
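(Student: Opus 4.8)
\textbf{Proof proposal for Lemma \ref{lem-3.4}.}

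The plan is to establish the three assertions in order, with Assertion (1) doing the bulk of the work and Assertions (2) and (3) following as essentially bookkeeping consequences. For Assertion (1), the idea is to exploit the product structure $\phi = \chi(r)\Phi(y)r^{i_0-\gamma}$ together with the fact that, by Theorem \ref{thm-1.4}(3), the coefficients $\beta_{k,\alpha,i,j}^{\partial M}$ are natural tangential bilinear differential operators built from the jets of $\phi_i$ and $\rho_j$. Writing out the expansion of $\phi$ near the boundary in the form of Equation (\ref{eqn-1.d}), one sees that multiplying $\Phi(y)$ by the fixed power $r^{i_0}$ simply relabels the Taylor coefficients: the $i$-th coefficient of $\Phi r^{i_0-\gamma}$ with respect to the exponent $\gamma$ equals the $(i-i_0)$-th coefficient with respect to the shifted exponent $\gamma-i_0$, since $\nabla_{\partial_r}\Phi=0$ kills all the ``extra'' terms except the clean power shift. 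Concretely, if $\psi := \chi(r)\Phi(y)$ is the section with $\gamma$-exponent $0$ and Taylor coefficients $\psi_i$, then $\phi_{i+i_0} = \psi_i$ relative to exponent $\gamma$ matches $\psi_i$ relative to exponent $\gamma-i_0$. The first step, therefore, is to make this index identification precise at the level of the $\phi_i$, using the defining property $\nabla_{\partial_r}\phi_i = 0$ and the explicit formulas for $\phi_0,\phi_1,\phi_2$ given after Equation (\ref{eqn-1.d}).

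The second step is to compare the two asymptotic expansions. On one side we have $\beta(\phi,\rho,D,B_{\mathcal D})(t)$ with $\phi$ having $\gamma$-exponent behaviour, giving boundary terms $t^{(1+k-\gamma)/2}\int_{\partial M}\beta_{k,\gamma}^{\partial M}$; on the other side the ``same'' heat content but viewed with the shifted exponent $\gamma-i_0$, giving terms $t^{(1+k'-(\gamma-i_0))/2}\int_{\partial M}\beta_{k',\gamma-i_0}^{\partial M}$. Since the function $\phi$ is literally the same in both descriptions, the two asymptotic series must agree; matching powers of $t$ forces $k' = k - i_0$ and $(1+k-\gamma)/2 = (1+(k-i_0)-(\gamma-i_0))/2$, which is an identity. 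Extracting the $(i_0,j)$ piece from the sum $\beta_{k,\gamma}^{\partial M} = \sum_{i+j\le k}\beta_{k,\gamma,i,j}^{\partial M}(\phi_i,\rho_j,\cdot)$ and using the coefficient identification from the first step yields Assertion (1). One must take a little care that the decomposition of $\beta_{k,\gamma}^{\partial M}$ into its $(i,j)$-pieces is unambiguous; this follows from the multilinearity and the fact that the $\phi_i$ can be varied independently (e.g.\ by the dimensional analysis / functorial arguments already used in Lemma \ref{lem-3.1}), and holomorphy in $\alpha$ for $\alpha\ne1$ lets one restrict attention to generic non-integer $\gamma$ first.

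Assertions (2) and (3) are then immediate applications of Assertion (1) combined with Lemma \ref{lem-3.1} and Lemma \ref{lem-3.3}. For Assertion (2), take $i_0 = 1$ and compare: $\beta_{1,\alpha,1,0}^{\partial M}(\phi_1,\rho_0,\cdot)$ has leading constant $\varepsilon_\alpha^1$, while $\beta_{0,\alpha-1,0,0}^{\partial M}(\phi_1,\rho_0,\cdot)$ has leading constant $\varepsilon_{\alpha-1}^0$, giving $\varepsilon_\alpha^1 = \varepsilon_{\alpha-1}^0$; taking $i_0=2$ in the $k=2$ formula gives $\varepsilon_\alpha^4 = \varepsilon_{\alpha-2}^0$; taking $i_0=1$ in the $k=2$ formula and tracking the $\langle L_{aa}\phi_1,\rho_0\rangle$ and $\langle\phi_1,\rho_1\rangle$ terms gives $\varepsilon_\alpha^5 = \varepsilon_{\alpha-1}^2$ and $\varepsilon_\alpha^{14} = \varepsilon_{\alpha-1}^3$. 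Assertion (3) then follows by specializing Assertion (2) at $\alpha=1$, i.e.\ $\varepsilon_1^1 = \varepsilon_0^0$, $\varepsilon_1^4 = \varepsilon_{-1}^0$, $\varepsilon_1^5 = \varepsilon_0^2$, $\varepsilon_1^{14} = \varepsilon_0^3$, and then evaluating the right-hand sides: $\varepsilon_0^0 = -2/\sqrt\pi$ and $\varepsilon_{-1}^0 = -1$ come from Assertion (1) of Lemma \ref{lem-3.3} with $\alpha=0$ and $\alpha=-1$ respectively (using $\Gamma(1)=1$, $\Gamma(3/2)=\sqrt\pi/2$), while $\varepsilon_0^2 = \frac12$ and $\varepsilon_0^3 = 0$ come from Theorem \ref{thm-1.3}(1b) read through Lemma \ref{lem-3.1} at $\alpha=0$ (cf.\ Remark \ref{rmk-1.6}). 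The main obstacle I anticipate is purely the care needed in Step 1--2: making rigorous the claim that the $(i,j)$-decomposition of $\beta_{k,\gamma}^{\partial M}$ is canonical and that the index-shift of the Taylor coefficients is exactly a clean power shift with no cross-terms — everything else is substitution of known values.
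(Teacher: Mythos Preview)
Your approach is essentially the same as the paper's: for Assertion (1) you compare the two asymptotic expansions of the same heat content (once with exponent $\gamma$, once with exponent $\gamma-i_0$) and match powers of $t$, exactly as the paper does via Theorem \ref{thm-1.4}; Assertions (2) and (3) then follow by reading off coefficients. One small point of care: you phrase Assertion (3) as ``specializing Assertion (2) at $\alpha=1$'', but Assertion (2) explicitly excludes $\alpha=1$. What actually justifies the relations $\varepsilon_1^1=\varepsilon_0^0$, etc., is applying Assertion (1) directly with $\gamma=1$ and $i_0\ge1$; this is legitimate because for such $\phi$ one has $\phi_0=0$, so by Theorem \ref{thm-1.4}(4) the logarithmic terms $\check\beta_k^{\partial M}$ vanish and the matching of powers of $t$ goes through unchanged. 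The paper handles this by simply citing Theorem \ref{thm-1.3} for the $\alpha=0$ values, which is the same evaluation you perform.
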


\begin{proof} Set $\phi:={\chi(r)}\Phi(y)r^{i_0-\gamma}$. By Theorem \ref{thm-1.4}
 with $\alpha=\gamma$ and with {$\alpha=\gamma-i_0$},
\begin{eqnarray*}
&&\sum_{i_0+j\le k}t^{(1+k-\gamma)/2}\int_{\partial M}\beta_{k,\gamma,i_0,j}(\Phi,\rho_j,D,{B_{\mathcal{D}}})dy\\
&\sim&\sum_{j\le\ell}t^{(1+\ell-(\gamma-{i_0}))/2}\int_{\partial
M}\beta_{\ell,\gamma-{i_0},0,j}(\Phi,\rho_j,D,{B_{\mathcal{D}}})dy\,.
\end{eqnarray*}
We set $k=\ell+{i_0}$ and equate powers of $t$ to establish Assertion (1); Assertion (2) then follows from Lemma \ref{lem-3.1}
and Assertion (3) from Theorem \ref{thm-1.3}.
\end{proof}

We continue our study with a functorial property that exploits the
fact that we are working in a very general context; we are no
longer working with the scalar Laplacian! Even if one {were} only
interested in the scalar Laplacian, it would be necessary to
consider general operators of Laplace type in order to use this
functorial property! Let $\Theta=1$ on $[0,\frac12]$ and with
compact support in $[0,1)$.

\begin{lemma}\label{lem-3.5}
Let $\mathbb{T}^{m-1}$ be the torus with periodic parameters
$(y_1,...,y_{m-1})$. Let $M=\mathbb{T}^{m-1}\times[0,1]$.
Let $f_a\in C^\infty([0,1])$ satisfy $f_a(0)=0$ and $f_a\equiv0$ near $r=1$. Let $\delta_a\in\mathbb{R}$. Set
$$\begin{array}{ll}
ds^2_M=\textstyle\sum_ae^{2f_a(r)}dy_a\circ dy_a+dr\circ dr,&
\rho:=e^{-\sum_af_a(r)},\\
D_M:=-\textstyle\sum_ae^{-2f_a(r)}(\partial_{y_a}^2+\delta_a\partial_{y_a})-\partial_r^2,
&\phi:=\Theta(r)r^{-\alpha}\,.\vphantom{\vrule height 11pt}
\end{array}$$
\begin{enumerate}
\item If $k>0$, then {$\int_{\partial M}\beta_{k,\alpha}^{\partial M}(\phi,\rho,D_M,{B_{\mathcal{D}}})dy=0$}.
\item $-\frac12\varepsilon_\alpha^1-\varepsilon_\alpha^2=0$.
\item $-\frac14(\varepsilon_\alpha^6+\varepsilon_\alpha^{12})=0$.
\item $-\frac14\varepsilon_\alpha^4+\frac12\varepsilon_\alpha^6-\frac14\varepsilon_\alpha^7-\varepsilon_\alpha^9=0$.
\item
$\frac18\varepsilon_\alpha^4+\frac12\varepsilon_\alpha^5+\frac14\varepsilon_\alpha^6+\frac18\varepsilon_\alpha^7
+\frac12\varepsilon_\alpha^{8}
+\varepsilon_\alpha^{10}=0$.
\item $\textstyle-\varepsilon_\alpha^9+\varepsilon_\alpha^{11}=0$.
\end{enumerate}
\end{lemma}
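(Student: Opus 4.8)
\emph{Proof plan.} The decisive feature of the model is that $\phi=\Theta(r)r^{-\alpha}$ and $\rho=e^{-\sum_af_a(r)}$ both depend only on the normal variable $r$, and that $\rho\,dx=dy\,dr$ because $\sqrt{\det g_M}=e^{\sum_af_a(r)}$ cancels $\rho$ exactly. Since $D_M$ commutes with translations in the torus variables, $u_{B_{\mathcal D}}=e^{-tD_{M,B_{\mathcal D}}}\phi$ is again a function of $(r,t)$ alone, and on such functions $D_M$ acts as $-\partial_r^2$; as $\phi$ vanishes near $r=1$, that face contributes only $O(t^\infty)$. Hence
$$\beta(\phi,\rho,D_M,B_{\mathcal D})(t)=(2\pi)^{m-1}\,\beta\bigl(\Theta(r)r^{-\alpha},\,1,\,-\partial_r^2,\,B_{\mathcal D}\bigr)(t)+O(t^\infty),$$
and the right-hand side was computed in \eqref{eqn-2.d1}: for $\alpha\ne1$ it is $(2\pi)^{m-1}\mathcal I_{\operatorname{Reg}}(\phi,\rho)+(2\pi)^{m-1}c_\alpha\,t^{(1-\alpha)/2}+O(t^\infty)$, with the logarithmic variant when $\alpha=1$. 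Comparing with Theorem \ref{thm-1.4} shows $\int_{\partial M}\beta_{k,\alpha}^{\partial M}(\phi,\rho,D_M,B_{\mathcal D})\,dy=0$ for every $k\ge1$, which is Assertion (1).

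For the remaining assertions I would compute all boundary invariants of the model at $r=0$. After the harmless normalisation $f_a(0)=0$, the second fundamental form is $L_{ab}=-\operatorname{diag}(f'_a(0))$, so $L_{aa}=-\sum_af'_a(0)$, $L_{aa}L_{bb}=(\sum_af'_a(0))^2$, $L_{ab}L_{ab}=\sum_af'_a(0)^2$; the warped-product formulas give $\operatorname{Ric}_{mm}=-\sum_a(f''_a(0)+f'_a(0)^2)$, while the scalar curvature need not be recorded since $\varepsilon_\alpha^{13}=0$ by Lemma \ref{lem-3.2}. By Lemma \ref{lem-1.1} the connection one-form of $D_M$ has $\omega_r=-\tfrac12\sum_af'_a$ and $\omega_{y_a}=\tfrac12\delta_a$, whence $E=\tfrac12\sum_af''_a+\tfrac14(\sum_af'_a)^2-\tfrac14\sum_ae^{-2f_a}\delta_a^2$ and $\langle\phi_{0;a},\rho_{0;a}\rangle=-\tfrac14\sum_a\delta_a^2$ at $r=0$. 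Finally, integrating $\nabla_{\partial_r}\phi_i=0$ and $\tilde\nabla_{\partial_r}\rho_i=0$ against the expansions of $\phi$ and $\rho$ gives $\sum_i\phi_i(0)r^i=\sum_i\rho_i(0)r^i=e^{-\frac12\sum_af_a(r)}$, so that $\phi_1=\rho_1=-\tfrac12\sum_af'_a(0)=\tfrac12L_{aa}$ and $\phi_2=\rho_2=\tfrac18(\sum_af'_a(0))^2-\tfrac14\sum_af''_a(0)$; in particular $\phi_1-\tfrac12L_{aa}\phi_0=0$, consistent with Assertion (1).

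Now I would substitute these values into the spanning formulas of Lemma \ref{lem-3.1} for $\int_{\partial M}\beta_{1,\alpha}^{\partial M}$ and $\int_{\partial M}\beta_{2,\alpha}^{\partial M}$, set the results to zero by Assertion (1), and use $\varepsilon_\alpha^6=\varepsilon_\alpha^0$, $\varepsilon_\alpha^{12}=-\varepsilon_\alpha^0$, $\varepsilon_\alpha^{13}=0$ from Lemma \ref{lem-3.2} together with the vanishing of the $\rho_1$-type coefficients $\varepsilon_\alpha^3=\varepsilon_\alpha^{14}=0$ (equivalently, that no $\rho_1$-term occurs at level one, by Lemma \ref{lem-3.4}). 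The outcome is a system of polynomial identities in $\sum_af'_a(0)$, $\sum_af''_a(0)$, $(\sum_af'_a(0))^2$, $\sum_af'_a(0)^2$ and $\sum_a\delta_a^2$, and for $m$ large these five quantities may be prescribed independently (e.g.\ by comparing the cases in which all $f'_a(0)$ are equal and in which only one is nonzero). Equating the coefficient of each to zero yields Assertion (2) (coefficient of $\sum_af'_a(0)$), Assertion (3) (coefficient of $\sum_a\delta_a^2$), Assertion (4) (coefficient of $\sum_af''_a(0)$), Assertion (6) (coefficient of $\sum_af'_a(0)^2$, which separates $\operatorname{Ric}_{mm}$ from $L_{ab}L_{ab}$), and Assertion (5) (coefficient of $(\sum_af'_a(0))^2$); the auxiliary first-order terms $\delta_a$ are inserted precisely so that the $\sum_a\delta_a^2$-coefficient supplies Assertion (3).

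The main obstacle is the second step: computing $E$ correctly from Lemma \ref{lem-1.1}(2), where the warping Christoffel symbols combine with the $\delta_a$-terms, and the warped-product Ricci and scalar curvatures, with every sign in place — and then checking that the $f$- and $\delta$-jets of the model really do furnish enough independent test quantities to isolate all six relations, i.e.\ that the linear map from the constants $\varepsilon_\alpha^i$ to the coefficients of the independent monomials behaves as claimed. The reduction in the first step is routine and the third step is bookkeeping.
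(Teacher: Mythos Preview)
Your approach is essentially the paper's: reduce to the one-dimensional heat content via $\rho\,dx=dy\,dr$ to obtain Assertion~(1), compute the boundary jets exactly as in Equations~(\ref{eqn-3.e}) and~(\ref{eqn-3.f}), and then read off the linear relations by equating coefficients of the independent monomials $\sum_a f_a'(0)$, $\sum_a\delta_a^2$, $\sum_a f_a''(0)$, $(\sum_a f_a'(0))^2$, and $\sum_a (f_a'(0))^2$. The only difference is cosmetic: you explicitly import $\varepsilon_\alpha^{13}=0$ and $\varepsilon_\alpha^3=\varepsilon_\alpha^{14}=0$ to drop the $\tau$, $\langle\phi_0,\rho_1\rangle$, and $\langle\phi_1,\rho_1\rangle$ contributions, whereas the paper does this silently (note that $\varepsilon_\alpha^3=0$ is actually Lemma~\ref{lem-3.6}, not~\ref{lem-3.4}, and the identities $\varepsilon_\alpha^6=\varepsilon_\alpha^0$, $\varepsilon_\alpha^{12}=-\varepsilon_\alpha^0$ you list are not needed for the relations as stated).
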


\begin{proof} We use $-\partial_r^2$ on $[0,1]$ and
$D_M$ on $M$. Since $\phi$ vanishes near $r=1$, this boundary component plays no role. Let
$u_B(r;t)$ be the solution of the heat equation on
$[0,1]$ with Dirichlet boundary conditions. Since the problem decouples, $u_B(r;t)$ is
also the solution of the heat equation on
$M$ with Dirichlet boundary conditions. The Riemannian measure
$$dx=\sqrt{\det g_{ij}}dydr=e^{\sum_af_a}dydr\,.$$
As $\rho=e^{-\sum_af_a}$, $\rho dx=dydr$. We suppose $\alpha\ne1$. Since
$\operatorname{vol}(\mathbb{T}^{m-1})=(2\pi)^{m-1}$,
\begin{eqnarray*}
&&\beta(\phi,\rho,D,{B_{\mathcal{D}}})(t)=
\int u_B(r;t)\rho dx=(2\pi)^{m-1}\int_0^1u_B(r;t)dr\\
&=&(2\pi)^{m-1}\beta(\phi,1,-\partial_r^2)(t)=(2\pi)^{m-1}(\mathcal{I}_{\operatorname{Reg}}(\phi,1)+\varepsilon_\alpha^0)+O(t^n)
\end{eqnarray*}
for any $n$ since the structures are flat on the interval. Note that $\tilde D\rho=0$. Assertion (1) now follows. If $\alpha=1$, the
computation must be modified to take the $\ln$ term into account; this does not affect the computation of $\beta_k^{\partial M}$
for $k>1$ and the desired conclusion follows similarly.

To apply Assertion (1), we must determine the relevant tensors. The formalism of Lemma \ref{lem-1.1} is crucial at this point as the
connection defined by the operator $D_M$ is no longer flat. We have:
$$
\begin{array}{ll}
\textstyle\omega_a=\frac12\delta_a,&\tilde\omega_a=-\omega_a=-\frac12\delta_a,\\
\omega_m=-\frac12\sum_af_a^\prime,&
\tilde\omega_m=-\omega_m=\frac12\sum_af_a^\prime\,.\vphantom{\vrule height 10pt}
\end{array}$$
We compute:
\begin{equation}\label{eqn-3.e}
\begin{array}{rl}
\phi_0=&1,\\
\phi_1=&\{\nabla_{\partial r}(r^\alpha\phi)\}|_{\partial M}=\{(\partial_r-\textstyle\frac12\sum_af_a^\prime)(1)\}|_{\partial M}=
-\frac12\sum_af_a^\prime(0),\vphantom{\vrule height 11pt}\\
\textstyle\phi_2=&\frac12\{(\nabla_{\partial_r})^2(r^\alpha\phi)\}|_{\partial M}
=\frac12\{(\partial_r-\textstyle\frac12\sum_af_a^\prime)^2(1)\}|_{\partial M}\vphantom{\vrule height 11pt}\\
\textstyle=&\frac18(\sum_af_a^\prime(0))^2-\frac14\sum_af_a^{\prime\prime}(0),\vphantom{\vrule height 11pt}\\
\rho_0=&1,\vphantom{\vrule height 11pt}\\
\textstyle\rho_1=&\{\tilde\nabla_{\partial r}(\rho)\}|_{\partial M}
=\{(\partial_r+\frac12\sum_af_a^\prime)(e^{-\sum_af_a})\}|_{\partial
M}=-\frac12\sum_af_a^\prime(0),\vphantom{\vrule height 11pt}\\
\textstyle\rho_2=&\frac12\{(\tilde\nabla_{\partial r})^2\rho\}|_{\partial M}
=\frac12\{(\partial_r+\frac12\sum_af_a^\prime)^2(e^{-\sum_af_a})\}|_{\partial M}\vphantom{\vrule height 11pt}\\
=&\textstyle\frac18(\sum_af_a^\prime(0))^2-\frac14\sum_af_a^{\prime\prime}(0)\,.\vphantom{\vrule
height 11pt}
\end{array}\end{equation}
It is straightforward to compute that we have the following relations when $r=0$; we refer to Lemma 2.3.7
\cite{G04} for further details:
\begin{equation}\label{eqn-3.f}
\begin{array}{l}
E=\frac12\sum_af_a^{\prime\prime}+\frac14(\sum_af_a^\prime)^2-\frac14\sum_a\delta_a^2,\quad
   L_{aa}=-\sum_af_a^\prime,\\
\operatorname{Ric}_{mm}=-\sum_a((f_a^\prime)^2+f_a^{\prime\prime}),\quad
L_{aa}L_{bb}=(\sum_af_a^\prime)^2,\quad
L_{ab}L_{ab}=\sum_a(f_a^\prime)^2\,.\vphantom{\vrule height 11pt}
\end{array}\end{equation}

Considering the term $\sum_af_a^\prime$ in $\beta_{1,\alpha}^{\partial M}$ yields Assertion (2), considering the term $\sum_a\delta_a^2$ in
$\beta_{2,\alpha}^{\partial M}$ yields Assertion (3), considering the term $\sum_af_a^{\prime\prime}$ in $\beta_{2,\alpha}^{\partial M}$
yields Assertion (4), considering the term $(\sum_af_a^\prime)^2$ in $\beta_{2,\alpha}^{\partial M}$ yields Assertion
(5), and considering the term $\sum_a(f_a^\prime)^2$ in $\beta_{2,\alpha}^{\partial M}$ yields Assertion (6).
\end{proof}

We continue our discussion:

\begin{lemma}\label{lem-3.6}
\ \begin{enumerate}
\smallbreak\item If $\rho_0=0$, then $\partial_t\beta(\phi,\rho,D,{B_{\mathcal{D}}})(t)=-\beta(\phi,\tilde
D\rho,D,B_{\mathcal{D}})(t)$.
\item $\varepsilon_\alpha^3=0$.
\item If $\alpha\ne1$, $\varepsilon_\alpha^7=\frac{4}{3-\alpha}\varepsilon_\alpha^0$,
$\varepsilon_\alpha^8=-\frac2{3-\alpha}\varepsilon_\alpha^0$, and $\varepsilon_\alpha^{14}=0$.
\item $\varepsilon_1^7=2\varepsilon_1^0+1$, $\varepsilon_1^8=-\varepsilon_1^0-\frac12$, and $\varepsilon_\alpha^{14}=0$.
\end{enumerate}
\end{lemma}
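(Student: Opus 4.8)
The plan is to derive all four assertions from the single exact identity in Assertion (1) by matching coefficients in the asymptotic expansions of Theorem \ref{thm-1.4}.

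For Assertion (1) I would fix $t>0$ and differentiate $\beta(\phi,\rho,D,B_{\mathcal{D}})(t)=\int_M\langle u_B(x;t),\rho(x)\rangle dx$ under the integral sign. This is legitimate because, for $t>0$, $u_B(\cdot;t)=e^{-tD_B}\phi$ is smooth on all of $M$, boundary included: the kernel $p_B(x,\tilde x;t)$ is smooth and vanishes on $\partial M$ to first order, so $u_B(x;t)=\int_M p_B(x,\tilde x;t)\phi(\tilde x)d\tilde x$ and each of its $x$-derivatives converge absolutely once $\operatorname{Re}(\alpha)<2$. Using $\partial_t u_B=-Du_B$ we get $\partial_t\beta=-\int_M\langle Du_B,\rho\rangle dx$, and Green's formula for the operators $D$ and $\tilde D$ on the compact manifold $M$ rewrites $\int_M\langle Du_B,\rho\rangle dx$ as $\int_M\langle u_B,\tilde D\rho\rangle dx$ plus a boundary integral over $\partial M$ that is bilinear in the boundary jets $u_B|_{\partial M}$, $(u_B)_{;m}|_{\partial M}$, $\rho|_{\partial M}$, $\rho_{;m}|_{\partial M}$. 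Since $u_B|_{\partial M}=0$ by the Dirichlet condition and $\rho|_{\partial M}=\rho_0=0$ by hypothesis, this boundary integral vanishes, giving $\partial_t\beta(\phi,\rho,D,B_{\mathcal{D}})(t)=-\beta(\phi,\tilde D\rho,D,B_{\mathcal{D}})(t)$; since $\tilde D\rho$ is smooth, the right side is a bona fide heat content.

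Next I would feed the expansions of Theorem \ref{thm-1.4} into this identity. Term-by-term differentiation sends the interior series of $\beta(\phi,\rho)$ to $-\sum_n\frac{(-t)^n}{n!}\mathcal I_{\operatorname{Reg}}(\phi,\tilde D^n(\tilde D\rho))$, which is exactly the interior part of $-\beta(\phi,\tilde D\rho)$, and sends $t^{(1+k-\alpha)/2}$ to $\tfrac{1+k-\alpha}{2}t^{(k-1-\alpha)/2}$; comparing powers of $t$ with the boundary series of $-\beta(\phi,\tilde D\rho)$ yields, whenever $\rho_0=0$,
$$\textstyle\frac{1+k-\alpha}{2}\int_{\partial M}\beta_{k,\alpha}^{\partial M}(\phi,\rho,D,B_{\mathcal{D}})dy=-\int_{\partial M}\beta_{k-2,\alpha}^{\partial M}(\phi,\tilde D\rho,D,B_{\mathcal{D}})dy,\qquad\beta_j^{\partial M}:=0\ \text{for}\ j<0.$$
The case $k=1$ has empty right side, so $\int_{\partial M}\beta_{1,\alpha}^{\partial M}(\phi,\rho)dy=0$ when $\rho_0=0$; by Lemma \ref{lem-3.1} the left side then equals $\varepsilon_\alpha^3\int_{\partial M}\langle\phi_0,\rho_1\rangle dy$, which forces $\varepsilon_\alpha^3=0$ and proves Assertion (2). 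For $k=2$, I would use Equation (\ref{eqn-1.c}) to compute $(\tilde D\rho)_0=-2\rho_2+L_{aa}\rho_1$ when $\rho_0=0$, so the right side is $-\varepsilon_\alpha^0\int_{\partial M}\langle\phi_0,-2\rho_2+L_{aa}\rho_1\rangle dy$, while by Lemma \ref{lem-3.1} (using $\varepsilon_\alpha^3=0$ and $\rho_0=0$) the left side is $\tfrac{3-\alpha}{2}\int_{\partial M}\{\varepsilon_\alpha^7\langle\phi_0,\rho_2\rangle+\varepsilon_\alpha^8\langle L_{aa}\phi_0,\rho_1\rangle+\varepsilon_\alpha^{14}\langle\phi_1,\rho_1\rangle\}dy$. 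Since $\phi_0,\phi_1,\rho_1,\rho_2$ are independent data, equating the coefficients of $\langle\phi_1,\rho_1\rangle$, $\langle\phi_0,\rho_2\rangle$ and $\langle L_{aa}\phi_0,\rho_1\rangle$ (moving the scalar $L_{aa}$ across the pairing) gives $\varepsilon_\alpha^{14}=0$, $\varepsilon_\alpha^7=\tfrac{4}{3-\alpha}\varepsilon_\alpha^0$, $\varepsilon_\alpha^8=-\tfrac{2}{3-\alpha}\varepsilon_\alpha^0$, which is Assertion (3).

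For Assertion (4) I would rerun the same argument in the $\alpha=1$ branch of Theorem \ref{thm-1.4}, keeping the logarithms: differentiating $t^{k/2}(\beta_{k,1}^{\partial M}+\ln(t)\check\beta_k^{\partial M})$ produces an extra term $t^{k/2-1}\check\beta_k^{\partial M}$, the $\ln(t)$-coefficients match automatically by Theorem \ref{thm-1.4}(4), and the $t^{-1/2}$-coefficient again gives $\varepsilon_1^3=0$. Comparing $t^0$-coefficients, the left side now carries the additional term $\int_{\partial M}\check\beta_2^{\partial M}(\phi,\rho)dy=\tfrac12\int_{\partial M}\langle\phi_0,(\tilde D\rho)_0\rangle dy$ and the right side equals $-\mathcal I_{\operatorname{Reg}}(\phi,\tilde D\rho)-\int_{\partial M}\beta_{0,1}^{\partial M}(\phi,\tilde D\rho)dy=-\mathcal I_{\operatorname{Reg}}(\phi,\tilde D\rho)-\tfrac\gamma2\int_{\partial M}\langle\phi_0,(\tilde D\rho)_0\rangle dy$ by Lemma \ref{lem-3.3}; substituting $(\tilde D\rho)_0=-2\rho_2+L_{aa}\rho_1$ and equating the coefficients of $\langle\phi_0,\rho_2\rangle$, $\langle L_{aa}\phi_0,\rho_1\rangle$ and $\langle\phi_1,\rho_1\rangle$ yields $\varepsilon_1^7-1=2\varepsilon_1^0$, $\varepsilon_1^8+\tfrac12=-\varepsilon_1^0$ and $\varepsilon_1^{14}=0$, which is Assertion (4). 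The one genuinely delicate point is establishing Assertion (1): one must know that for $t>0$ the singular datum still yields a solution smooth up to, and vanishing on, $\partial M$, so that Green's formula applies with every boundary term annihilated by $\rho_0=0$ — this is exactly where $\operatorname{Re}(\alpha)<2$ is used. After that the proof is the routine matching of finitely many coefficients in two asymptotic series.
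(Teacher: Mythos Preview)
Your proof is correct and follows essentially the same approach as the paper: establish Assertion~(1) by integrating by parts (the boundary terms vanish since $u_B|_{\partial M}=0$ and $\rho_0=0$), then compare asymptotic expansions term-by-term to obtain the recursion $\tfrac{1+k-\alpha}{2}\int_{\partial M}\beta_{k,\alpha}^{\partial M}(\phi,\rho)dy=-\int_{\partial M}\beta_{k-2,\alpha}^{\partial M}(\phi,\tilde D\rho)dy$, and read off the constants at $k=1,2$ using Equation~(\ref{eqn-1.c}) for $(\tilde D\rho)_0$. Your treatment of the $\alpha=1$ case, tracking the extra $\check\beta_2^{\partial M}$ contribution from differentiating $t\ln t$, is exactly what the paper does (though your exposition momentarily keeps the $\mathcal I_{\operatorname{Reg}}$ term on the right without noting its twin on the left---these cancel, and your final equations are correct).
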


\begin{proof}
Assume $\rho_0=0$. By Equation (\ref{eqn-1.c}) we have
\begin{equation}\label{eqn-3.g}
-(\tilde D\rho)_0=2\rho_2-L_{aa}\rho_1\,.
\end{equation}
We compute that:
\begin{eqnarray*}
\partial_t\beta(\phi,\rho,D,{B_{\mathcal{D}}})(t)&=&-\langle De^{-tD_B}\phi,\rho\rangle=
-\langle e^{-tD_B}\phi,\tilde D\rho\rangle\\ &=&-\beta(\phi,\tilde
D\rho,{D,B_{\mathcal{D}}})(t),
\end{eqnarray*}
where the middle equality is justified as $\rho_0=0$. This proves Assertion (1). We use Assertion (1) to see
$$\int_{\partial M}\beta_{1,\alpha}^{\partial M}(\phi,\rho,D,B_{\mathcal{D}})dy=0\,.$$
Since there is no restriction on $\rho_1$, we conclude that $\varepsilon_\alpha^3=0$ which establishes
Assertion (2). Furthermore, if $\alpha\ne1$, we may conclude
\begin{equation}\label{eqn-3.h}
{\textstyle\frac{1+k-\alpha}2}\int_{\partial M}\beta_{k,\alpha}^{\partial M}(\phi,\rho,D,B_{\mathcal{D}})dy
=-\int_{\partial M}\beta_{k-2,\alpha}^{\partial M}(\phi,\tilde D\rho,D,B_{\mathcal{D}})dy\,.
\end{equation}
We set $k=2$ to see that
\begin{eqnarray*}
&&-\int_{\partial M}\varepsilon_\alpha^0\langle\phi_0,(\tilde D\rho)_0\rangle\\
&=&{\textstyle\frac{3-\alpha}2}\int_{\partial M}\left\{\langle
\varepsilon_\alpha^7\langle\phi_0,\rho_2\rangle +\varepsilon_\alpha^8\langle L_{aa}\phi_0,\rho_1\rangle
+\varepsilon_\alpha^{14}\langle\phi_1,\rho_1\rangle\right\} dy\,.
\end{eqnarray*}
Assertion (3) now follows from Equation (\ref{eqn-3.g}). If $\alpha=1$, we have:
\begin{eqnarray*}
&&-\int_{\partial M}\varepsilon_1^0\langle\phi_0,(\tilde D\rho)_0\rangle\\
&=&\int_{\partial M}\left\{{\textstyle\frac12}\langle\phi_0,(\tilde D\rho)_0\rangle
+\varepsilon_1^7\langle\phi_0,\rho_2\rangle +\varepsilon_1^8\langle L_{aa}\phi_0,\rho_1\rangle
+\varepsilon_1^{14}\langle\phi_1,\rho_1\rangle\right\} dy\,.
\end{eqnarray*}
Assertion (4) now follows.
\end{proof}

Assertions (1) and (2) of Theorem \ref{thm-1.5} will follow from Lemma \ref{lem-3.2} and from the following result:
\begin{lemma}\label{lem-3.7}
\ \begin{enumerate}\item Suppose that $\alpha\ne1$. Then:
\begin{enumerate}
\item $\varepsilon_\alpha^0=c_\alpha$.
\item $\varepsilon_\alpha^1=c_{\alpha-1}$, $\varepsilon_\alpha^2=-\frac12c_{\alpha-1}$,
and $\varepsilon_\alpha^3=0$.
\item $\varepsilon_\alpha^4=c_{\alpha-2}$ and $\varepsilon_\alpha^5=-\frac12c_{\alpha-2}$.
\item $\varepsilon_\alpha^6=-\frac{\alpha-3}{2(\alpha-1)(\alpha-2)}c_{\alpha-2}$, $\varepsilon_\alpha^7=\frac2{(\alpha-1)(\alpha-2)}c_{\alpha-2}$ and
$\varepsilon_\alpha^8=-\frac1{(\alpha-1)(\alpha-2)}c_{\alpha-2}$.
\item $\varepsilon_\alpha^9=-\frac{\alpha-1}{4(\alpha-2)}c_{\alpha-2}$ and $\varepsilon_\alpha^{10}=\frac{\alpha-1}{8(\alpha-2)}c_{\alpha-2}$.
\item $\varepsilon_\alpha^{11}=-\frac{\alpha-1}{4(\alpha-2)}c_{\alpha-2}$,
$\varepsilon_\alpha^{12}=\frac{\alpha-3}{2(\alpha-1)(\alpha-2)}c_{\alpha-2}$,
$\varepsilon_\alpha^{13}=0$,  and
$\varepsilon_\alpha^{14}=0$.
\end{enumerate}
\item Let $\alpha=1$. Then:
\begin{enumerate}
\item $\varepsilon_1^0=\frac12\gamma$.
\item $\varepsilon_1^1=-\frac2{\sqrt\pi}$, $\varepsilon_2^1=\frac1{\sqrt\pi}$, and $\varepsilon_3^1=0$.
\item $\varepsilon_1^4=-1$, $\varepsilon_1^5=\frac12$.
\item $\varepsilon_1^6=\frac12\gamma$, $\varepsilon_1^7=\gamma+1$, and $\varepsilon_1^8=-\frac12\gamma-\frac12$.
\item $\varepsilon_1^9=0$ and $\varepsilon_1^{10}=0$.
\item $\varepsilon_1^{11}=0$, $\varepsilon_1^{12}=\frac12\gamma$, $\varepsilon_1^{13}=0$, and $\varepsilon_1^{14}=0$.
\end{enumerate}\end{enumerate}
\end{lemma}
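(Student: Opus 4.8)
The plan is to treat all the $\varepsilon^i_\alpha$ (and the $\alpha=1$ constants $\varepsilon^i_1$) as unknowns already pinned down by the linear relations assembled in Lemmas~\ref{lem-3.2}--\ref{lem-3.6}, to verify that this over-determined system has a unique solution, and to rewrite that solution into the stated normalized form by a single Gamma-function identity. The only ingredient beyond the earlier lemmas is the shift relation
$$c_{\alpha-2}=\frac{2(2-\alpha)(\alpha-1)}{\alpha-3}\,c_\alpha\qquad\Longleftrightarrow\qquad c_\alpha=-\frac{\alpha-3}{2(\alpha-1)(\alpha-2)}\,c_{\alpha-2},$$
which is immediate from $\Gamma\!\left(\tfrac{4-\alpha}{2}\right)=\tfrac{2-\alpha}{2}\,\Gamma\!\left(\tfrac{2-\alpha}{2}\right)$ and the definition of $c_\alpha$ in Theorem~\ref{thm-1.5}; it is precisely what converts the multiples of $c_\alpha$ and $c_{\alpha-1}$ that arise most directly from the lemmas into the multiples of $c_{\alpha-2}$ displayed in the second-order formulas.

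For $\alpha\ne1$ I would take the constants in the order in which they become available. Lemma~\ref{lem-3.3}(1) gives $\varepsilon^0_\alpha=c_\alpha$, which is (1a). The index-shift Lemma~\ref{lem-3.4}(2) then yields $\varepsilon^1_\alpha=\varepsilon^0_{\alpha-1}=c_{\alpha-1}$ and $\varepsilon^4_\alpha=\varepsilon^0_{\alpha-2}=c_{\alpha-2}$; Lemma~\ref{lem-3.5}(2) gives $\varepsilon^2_\alpha=-\tfrac12\varepsilon^1_\alpha$, whence also $\varepsilon^5_\alpha=\varepsilon^2_{\alpha-1}=-\tfrac12c_{\alpha-2}$ by Lemma~\ref{lem-3.4}(2); and Lemma~\ref{lem-3.6}(2) gives $\varepsilon^3_\alpha=0$. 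This settles (1b) and (1c). For the remaining weight-two constants one reads off $\varepsilon^6_\alpha=\varepsilon^0_\alpha$, $\varepsilon^{12}_\alpha=-\varepsilon^0_\alpha$, $\varepsilon^{13}_\alpha=0$ from Lemma~\ref{lem-3.2}(4) (so that $\varepsilon^6_\alpha=-\varepsilon^{12}_\alpha$ of Lemma~\ref{lem-3.5}(3) is an automatic consistency check), $\varepsilon^7_\alpha=\tfrac{4}{3-\alpha}\varepsilon^0_\alpha$, $\varepsilon^8_\alpha=-\tfrac{2}{3-\alpha}\varepsilon^0_\alpha$, $\varepsilon^{14}_\alpha=0$ from Lemma~\ref{lem-3.6}(3) (again $\varepsilon^{14}_\alpha=\varepsilon^3_{\alpha-1}=0$ consistently), and finally $\varepsilon^9_\alpha$ and $\varepsilon^{10}_\alpha$ from the two remaining relations in Lemma~\ref{lem-3.5}(4)--(5), with $\varepsilon^{11}_\alpha=\varepsilon^9_\alpha$ from Lemma~\ref{lem-3.5}(6). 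Substituting the already-known values, clearing denominators, and applying the shift relation of the first paragraph turns each of $\varepsilon^6_\alpha,\dots,\varepsilon^{12}_\alpha$ into its asserted multiple of $c_{\alpha-2}$, giving (1d)--(1f).

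The case $\alpha=1$ is handled by running exactly the same chain with the $\alpha=1$ versions of the inputs: $\varepsilon^0_1=\tfrac12\gamma$ from Lemma~\ref{lem-3.3}(2); the first-order constants $\varepsilon^1_1=-\tfrac2{\sqrt\pi}$ and $\varepsilon^4_1=-1$, $\varepsilon^5_1=\tfrac12$ from Lemma~\ref{lem-3.4}(3); $\varepsilon^7_1=2\varepsilon^0_1+1$, $\varepsilon^8_1=-\varepsilon^0_1-\tfrac12$ from Lemma~\ref{lem-3.6}(4); $\varepsilon^3_1=0$ from Lemma~\ref{lem-3.6}(2); the linear relations of Lemma~\ref{lem-3.5}(2)--(6), valid at $\alpha=1$ for the weight-two coefficient as noted in that proof; and the relations of Lemma~\ref{lem-3.2}(4), which persist at $\alpha=1$ on passing to the constant terms of the Laurent expansions in $\alpha$ (equivalently, by rerunning the product-manifold computation with the modified regularization \eqref{eqn-3.c}, so that $\varepsilon^6_1$ and $\varepsilon^{12}_1$ are read off from $c_\alpha=\tfrac1{\alpha-1}+\tfrac12\gamma+O(\alpha-1)$). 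Solving then forces the remaining constants together with the vanishing $\varepsilon^9_1=\varepsilon^{10}_1=\varepsilon^{11}_1=0$, i.e.\ (2b)--(2f). Finally Assertions (1) and (2) of Theorem~\ref{thm-1.5} follow on combining these values with the product formula of Lemma~\ref{lem-3.2}, exactly as announced before Lemma~\ref{lem-3.7}.

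The main obstacle is organizational rather than conceptual: there is no new idea to find, since every $\varepsilon^i$ is already determined by the earlier lemmas, but one must keep the arguments of the index shifts straight (so that $\varepsilon^5_\alpha$ really is $\varepsilon^2_{\alpha-1}$ evaluated so as to produce $c_{\alpha-2}$, and similarly for $\varepsilon^2_\alpha$ and $c_{\alpha-1}$), carry out the Gamma-function simplifications without arithmetic slips, and in the $\alpha=1$ case correctly extract the constant terms of the Laurent expansions and confirm that the pole cancellations against $\mathcal{I}_{\operatorname{Reg}}(\phi,\tilde D^n\rho)$ sketched in the introduction are consistent with the independently derived normalizing constants. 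The one genuinely useful structural fact is the redundancy of the system --- the two independent routes to $\varepsilon^{14}_\alpha=0$, the coincidence of $\varepsilon^6_\alpha=-\varepsilon^{12}_\alpha$ with $\varepsilon^6_\alpha=\varepsilon^0_\alpha$, and so on --- which serves as a built-in check on the whole computation.
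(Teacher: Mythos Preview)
Your approach is correct and coincides with the paper's own proof: both assemble the constants by feeding the outputs of Lemmas~\ref{lem-3.3}, \ref{lem-3.4}, \ref{lem-3.2}(4), \ref{lem-3.6}, and \ref{lem-3.5} into one another in the same order, and both invoke the single Gamma-function shift $c_\alpha=-\tfrac{\alpha-3}{2(\alpha-1)(\alpha-2)}c_{\alpha-2}$ to put the weight-two constants over the common factor $c_{\alpha-2}$. Your write-up is in fact more explicit than the paper's about which relation determines which constant, and your remark on the redundancy of the system (two routes to $\varepsilon^{14}_\alpha=0$, the compatibility of Lemma~\ref{lem-3.2}(4) with Lemma~\ref{lem-3.5}(3)) is a useful sanity check the paper leaves implicit.
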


\begin{proof} Let $\alpha\ne1$.
Assertions (1a) and (2a) follow from Lemma \ref{lem-3.3}. Assertions (1b) and (2b) follow from Lemmas \ref{lem-3.4}, \ref{lem-3.5}, and
\ref{lem-3.6}. Assertions (1c) and (2c) follow from Assertion (1b) and from Lemma \ref{lem-3.4}. Because
$s\Gamma(s)=\Gamma(s+1)$, we have:
$$
\textstyle c_\alpha=-\frac{\alpha-3}{2(\alpha-1)(\alpha-2)}c_{\alpha-2}\,.
$$
Assertions (1d) and (2d) now follow from Lemmas \ref{lem-3.2} and \ref{lem-3.6}.
We use Lemma \ref{lem-3.5}  to establish Assertions (1e) and (2e) by computing for
$\alpha\ne1$ that
\begin{eqnarray*}
\varepsilon_\alpha^9&=&\textstyle-\frac14\varepsilon_\alpha^4+\frac12\varepsilon_\alpha^6-\frac14\varepsilon_\alpha^7\\
&=&\textstyle c_{\alpha-2}(-\frac14-\frac14\frac{\alpha-3}{(\alpha-1)(\alpha-2)}
-\frac14\frac2{(\alpha-1)(\alpha-2)})=-\frac{\alpha-1}{4(\alpha-2)}c_{\alpha-2},\\
\varepsilon_\alpha^{10}&=&\textstyle-\frac18\varepsilon_\alpha^4-\frac12\varepsilon_\alpha^5
-\frac14\varepsilon_\alpha^6-\frac18\varepsilon_\alpha^7-\frac12\varepsilon_\alpha^8\\
&=&\textstyle c_{\alpha-2}(-\frac18+\frac14+\frac18\frac{\alpha-3}{(\alpha-1)(\alpha-2)}-\frac14\frac1{(\alpha-1)(\alpha-2)}
+\frac12\frac1{(\alpha-1)(\alpha-2)})=\frac{\alpha-1}{8(\alpha-2)}c_{\alpha-2}
\end{eqnarray*}
and for $\alpha=1$ that
\begin{eqnarray*}
&&\textstyle\varepsilon_1^9=-\frac14(-1)+\frac12(\frac12\gamma)-\frac14(\gamma+1)=0,\\
&&\textstyle\varepsilon_1^{10}=-\frac18(-1)-\frac12(\frac12)-\frac14(\frac12\gamma)-\frac18(\gamma+1)-\frac12(-\frac12\gamma-\frac12)=0\,.
\end{eqnarray*}
Assertions (1f) and (2f) follow from Lemmas \ref{lem-3.5}, \ref{lem-3.2}, and \ref{lem-3.6}.
\end{proof}

\section{Heat content asymptotics for Robin boundary conditions}\label{sect-4}
Section \ref{sect-4} is devoted to the proof of Assertion (3) of Theorem \ref{thm-1.5}. Let $B=B_{\mathcal{R}}$ define Robin boundary
conditions. We clear the previous notation concerning the constants
$\varepsilon_\alpha^i$. Recall $\tilde B_{\mathcal{R}}\rho=\rho_1+\tilde S\rho_0$. Lemma \ref{lem-3.1} extends immediately to this setting,
after including the additional tensor
$S$ in the Weyl calculus, to yield:
\begin{lemma}\label{lem-4.1}
There exist universal constants $\varepsilon_\alpha^i$ and $d_\alpha^j$  so
that:\begin{enumerate}
\item $\int_{\partial M}\beta_{0,\alpha}^{\partial M}(\phi,\rho,D,{B_{\mathcal{R}}})dy=\int_{\partial
M}\varepsilon_\alpha^0\langle\phi_0,\rho_0\rangle dy$.
\smallbreak\item $\int_{\partial M}\beta_{1,\alpha}^{\partial M}(\phi,\rho,D,{B_{\mathcal{R}}})dy=\int_{\partial M}\{
\varepsilon_\alpha^1\langle\phi_1,\rho_0\rangle+\varepsilon_\alpha^2
\langle L_{aa}\phi_0,\rho_0\rangle+\varepsilon^3_\alpha\langle\phi_0,\rho_1\rangle$
\smallbreak $+d_\alpha^1\langle\phi_0,\tilde B_{\mathcal{R}}\rho\rangle\}dy$.
\smallbreak\item $
\int_{\partial M}\beta_{2,\alpha}^{\partial M}(\phi,\rho,D,{B_{\mathcal{R}}})dy
    =\int_{\partial M}\{\varepsilon_\alpha^4\langle\phi_2,\rho_0\rangle+\varepsilon_\alpha^5\langle L_{aa}\phi_1,\rho_0\rangle
    +\varepsilon_\alpha^6\langle E\phi_0,\rho_0\rangle$
\smallbreak$+\varepsilon_\alpha^7\langle\phi_0,\rho_2\rangle
   +\varepsilon_\alpha^8\langle L_{aa}\phi_0,\rho_1\rangle
    +\varepsilon_\alpha^9\langle\operatorname{Ric}_{mm}\phi_0,\rho_0\rangle
+\varepsilon_\alpha^{10}\langle L_{aa}L_{bb}\phi_0,\rho_0\rangle$\smallbreak
$+\varepsilon_\alpha^{11}\langle
L_{ab}L_{ab}\phi_0,\rho_0\rangle+\varepsilon_\alpha^{12}\langle \phi_{0;a},\rho_{0;a}\rangle
   +\varepsilon_\alpha^{13}\langle\tau\phi_0,\rho_0\rangle+\varepsilon_\alpha^{14}\langle\phi_1,\rho_1\rangle$\smallbreak
$+\langle d_\alpha^2\phi_1+d_\alpha^3 S\phi_0+d_\alpha^4 L_{aa}\phi_0,\tilde B_{\mathcal{R}}\rho\rangle\}dy$.
\end{enumerate}\end{lemma}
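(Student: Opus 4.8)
\emph{Plan of proof.} The plan is to repeat, with minor additions, the dimensional-analysis argument used for Lemma~\ref{lem-3.1}. Since we are in the Robin setting we have $\operatorname{Re}(\alpha)<1$ throughout, so Theorem~\ref{thm-1.4}(1) applies with no logarithmic terms and the natural bilinear tangential operators $\beta_{k,\alpha,i,j}^{\partial M}(\phi_i,\rho_j,D,B_{\mathcal R})$ are holomorphic in $\alpha$ in the relevant range; in particular the delicate $\alpha=1$ bookkeeping of Lemma~\ref{lem-3.1} does not arise. The one new ingredient is the scaling behaviour of $S$: under $g_c:=c^2g$ the inward unit normal is multiplied by $c^{-1}$, so $\phi_{;m,c}=c^{-1}\phi_{;m}$, and in order that the Robin boundary condition $(\phi_{;m}+S\phi)|_{\partial M}=0$ transform homogeneously — so that $D_{c,B_{\mathcal R}}=c^{-2}D_{B_{\mathcal R}}$ and the identity $\beta(\phi,\rho,D_c,B_{\mathcal R})(t)=c^m\beta(\phi,\rho,D,B_{\mathcal R})(c^{-2}t)$ of Lemma~\ref{lem-3.1} persists — one must set $S_c=c^{-1}S$, equivalently $\tilde S_c=c^{-1}\tilde S$ and $(\tilde B_{\mathcal R}\rho)_c=c^{-1}\tilde B_{\mathcal R}\rho$. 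Adjoining $S_c=c^{-1}S$ to the list \eqref{eqn-3.a}, the argument of Lemma~\ref{lem-3.1} goes through verbatim: expanding both sides of the scaling identity by Theorem~\ref{thm-1.4}, decoupling interior and boundary contributions, and equating powers of $t$ shows that each $\beta_{k,\alpha,i,j}^{\partial M}$ is weighted homogeneous of degree $k$ in the jets of $\phi_i$, of $\rho_j$, and of the boundary tensors $L$, $E$, $\operatorname{Ric}$, and $S$, where $S$ carries weight one.

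Next I would invoke Weyl's theory of invariants, exactly as for Lemma~\ref{lem-3.1}, to write down a spanning set for the invariants produced this way, now allowing $S$ and $\tilde S$ to occur and removing tangential divergence terms by integration by parts as in \eqref{eqn-3.b}. Since $\langle S\psi,\sigma\rangle=\langle\psi,\tilde S\sigma\rangle$, an occurrence of $S$ on the $\phi$--side equals one of $\tilde S$ on the $\rho$--side, and a weight count (noting that no tangential derivative of $S$ can appear at weight $\le2$) shows the only invariants genuinely involving $S$ are $\langle S\phi_0,\rho_0\rangle$ at weight~$1$ and, at weight~$2$, $\langle S\phi_1,\rho_0\rangle$, $\langle S\phi_0,\rho_1\rangle$, $\langle L_{aa}S\phi_0,\rho_0\rangle$, and $\langle S^2\phi_0,\rho_0\rangle$. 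Collecting the $S$--free invariants reproduces the list of Lemma~\ref{lem-3.1}, and the new ones are repackaged, via $\tilde B_{\mathcal R}\rho=\rho_1+\tilde S\rho_0$, into the terms $d_\alpha^1\langle\phi_0,\tilde B_{\mathcal R}\rho\rangle$ at weight~$1$ and $\langle d_\alpha^2\phi_1+d_\alpha^3S\phi_0+d_\alpha^4L_{aa}\phi_0,\tilde B_{\mathcal R}\rho\rangle$ at weight~$2$. Weighted homogeneity then forces the coefficients to be universal functions of $\alpha$ alone, giving the stated forms.

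The main obstacle is not analytic but structural, and concerns precisely the repackaging in the last paragraph: the parametrization above locks the coefficient of $\langle S^2\phi_0,\rho_0\rangle$ to that of $\langle S\phi_0,\rho_1\rangle$, whereas Weyl theory alone leaves them independent. Thus one needs to know that the smearing function $\rho$ enters the Robin boundary coefficients only through $\rho_0$, $\rho_2$, and the combination $\tilde B_{\mathcal R}\rho$. This is the known structural feature of Robin heat content recorded in \cite{G04} — already visible in the smooth case, where Theorem~\ref{thm-1.3}(2) exhibits $\phi$ and $\rho$ entering through $B_{\mathcal R}\phi$ and $\tilde B_{\mathcal R}\rho$ — and it may also be deduced from the duality $\beta(\phi,\rho,D,B_{\mathcal R})(t)=\beta(\rho,\phi,\tilde D,\tilde B_{\mathcal R})(t)$. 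Granting this input, no new work beyond Lemma~\ref{lem-3.1} is required, and the proof is complete.
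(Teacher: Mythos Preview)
Your approach is precisely the paper's, which says only that Lemma~\ref{lem-3.1} ``extends immediately to this setting, after including the additional tensor $S$ in the Weyl calculus'' and gives no further argument. You have been more careful than the authors: you correctly notice that the stated parametrization ties the coefficient of $\langle S\phi_0,\rho_1\rangle$ to that of $\langle S^2\phi_0,\rho_0\rangle$ (both equal $d_\alpha^3$), one constraint beyond what invariant theory alone supplies, and the paper does not comment on this at all.

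Your proposed fixes are reasonable, though the duality $\beta(\phi,\rho,D,B_{\mathcal R})=\beta(\rho,\phi,\tilde D,\tilde B_{\mathcal R})$ is awkward here since it swaps the smooth and singular slots and so does not directly compare two instances of Theorem~\ref{thm-1.4}. A route that stays entirely within the paper is to note that the identity of Lemma~\ref{lem-4.2}(1) is just Green's formula and is logically independent of any parametrization; iterating the resulting recursion from $k=0$ to $k=2$ shows $\beta_{2,\alpha}^{\partial M}(\phi,\rho,D,B_{\mathcal R})=0$ whenever $\tilde B_{\mathcal R}\rho=0$, and substituting $\rho_1=-\tilde S\rho_0$ into the full fifteen-term Weyl ansatz then forces exactly the missing relation. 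Equivalently, one may simply carry a fifteenth constant through the argument and let Lemma~\ref{lem-4.2} pin it down. Either way, the stated form is in the end correct, and your scrutiny here goes beyond what the paper provides.
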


We begin our analysis by showing that all the constants $\varepsilon^i_\alpha$ vanish:

\begin{lemma}\label{lem-4.2}
\ \begin{enumerate}
\item If ${\tilde B}_{\mathcal{R}}\rho=0$, then $\partial_t\beta(\phi,\rho,D,B_{\mathcal{R}})(t)=-\beta(\phi,\tilde
D\rho,D,B_{\mathcal{R}})(t)$.
\item $\varepsilon_\alpha^0=\varepsilon_\alpha^1=\varepsilon_\alpha^2=0$.
\item $\varepsilon_\alpha^4=\varepsilon_\alpha^5=\varepsilon_\alpha^6
=\varepsilon_\alpha^7=\varepsilon_\alpha^9=\varepsilon_\alpha^{10}=
\varepsilon_\alpha^{11}=\varepsilon_\alpha^{12}=\varepsilon_\alpha^{13}=0$.
\item $\varepsilon_\alpha^3=\varepsilon_\alpha^8=\varepsilon_\alpha^{14}=0$.
\end{enumerate}
\end{lemma}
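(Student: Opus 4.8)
The plan is to mimic the structure of the Dirichlet argument (Lemmas 3.5–3.7), but exploiting the crucial feature of Robin conditions recorded in Assertion (1b) of Theorem 1.3: with Robin boundary conditions the leading boundary coefficient $\beta_0^{\partial M}$ already vanishes in the smooth case. First I would establish Assertion (1): assuming $\tilde B_{\mathcal R}\rho=0$, I differentiate $\beta(\phi,\rho,D,B_{\mathcal R})(t)=\langle e^{-tD_B}\phi,\rho\rangle$ in $t$ to get $-\langle D e^{-tD_B}\phi,\rho\rangle$, and then integrate by parts. The boundary terms produced by moving $D$ across to $\tilde D$ involve exactly $B_{\mathcal R}(e^{-tD_B}\phi)$ and $\tilde B_{\mathcal R}\rho$; the first vanishes because $e^{-tD_B}\phi$ satisfies the Robin condition, and the second vanishes by hypothesis. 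This is the Robin analogue of Lemma 3.6(1), and the integration-by-parts identity is the standard Green's formula for Laplace-type operators with Robin data (see Equation (\ref{eqn-1.c}) for the relevant boundary decomposition).

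With Assertion (1) in hand, Assertion (2) is immediate: taking $\tilde B_{\mathcal R}\rho = 0$ (which in particular forces $\rho_1 = -\tilde S\rho_0$, so $\rho_0$ is still unconstrained), the $t\downarrow 0$ expansion of $\partial_t\beta$ must match $-\beta(\phi,\tilde D\rho,D,B_{\mathcal R})$ term by term. The lowest-order term $t^{(1-\alpha)/2}\int_{\partial M}\beta_{0,\alpha}^{\partial M}(\phi,\rho,D,B_{\mathcal R})dy = t^{(1-\alpha)/2}\varepsilon_\alpha^0\int_{\partial M}\langle\phi_0,\rho_0\rangle dy$, after differentiation, produces a power $t^{(-1-\alpha)/2}$ which cannot be matched by anything on the right-hand side (whose lowest boundary power is $t^{(1-\alpha)/2}$ and whose interior series starts at $t^0$); hence $\varepsilon_\alpha^0=0$. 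Comparing the $t^{(1-\alpha)/2}$ and $t^{(2-\alpha)/2}$ coefficients then forces $\varepsilon_\alpha^1=\varepsilon_\alpha^2=0$, because the corresponding right-hand terms $\beta_{-1,\alpha}$, $\beta_{0,\alpha}$ either do not exist or have already been shown to vanish. (One must check this for $\alpha\ne 1$, but since the Robin hypothesis is $\operatorname{Re}(\alpha)<1$, this is automatic.)

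Assertion (3) is proved by the same index-shifting and differentiation machinery: using the Robin analogue of Lemma 3.4 to reduce $\varepsilon_\alpha^4,\varepsilon_\alpha^5$ to $\varepsilon_{\alpha-2}^0,\varepsilon_{\alpha-1}^2$ (both zero by Assertion (2), applied at shifted values of $\alpha$), and using the differentiation identity at $k=2$ to kill $\varepsilon_\alpha^7$. The purely geometric coefficients $\varepsilon_\alpha^6,\varepsilon_\alpha^9,\varepsilon_\alpha^{10},\varepsilon_\alpha^{11},\varepsilon_\alpha^{12},\varepsilon_\alpha^{13}$ are then handled by the Robin version of the functorial product/torus computation of Lemma 3.5: one runs the same warped-product example $M=\mathbb{T}^{m-1}\times[0,1]$ with $\rho=e^{-\sum f_a}$, for which $\rho dx=dydr$ and the heat problem again decouples; the Dirichlet-end argument showing all $\beta_k^{\partial M}$ ($k>0$) vanish needs only that the one-dimensional structure is flat, which still holds, and so the same linear relations among the $\varepsilon_\alpha^i$ drop out, now with right-hand side zero. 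Assertion (4) follows by choosing test data with $\rho_1\ne 0$ but all other relevant jets zero, so that the coefficients of $\langle\phi_0,\rho_1\rangle$, $\langle L_{aa}\phi_0,\rho_1\rangle$, $\langle\phi_1,\rho_1\rangle$ in the vanishing-after-differentiation relation must individually vanish. I expect the main obstacle to be bookkeeping: one must be careful to separate the $\varepsilon_\alpha^i$-terms (built from $\rho_0,\rho_1$ freely) from the $d_\alpha^j$-terms (built from the combination $\tilde B_{\mathcal R}\rho=\rho_1+\tilde S\rho_0$), since the differentiation identity of Assertion (1) only gives information when $\tilde B_{\mathcal R}\rho=0$, and the test configurations must be chosen so that this constraint does not accidentally annihilate the invariant one is trying to isolate.
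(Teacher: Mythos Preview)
Your proof of Assertion (1) is correct and matches the paper's. Your argument for Assertion (2) is also essentially the paper's: choose $\rho$ with $\tilde B_{\mathcal R}\rho=0$ and run the recursion
\[
\textstyle\frac{1+k-\alpha}{2}\int_{\partial M}\beta_{k,\alpha}^{\partial M}(\phi,\rho,D,B_{\mathcal R})\,dy
=-\int_{\partial M}\beta_{k-2,\alpha}^{\partial M}(\phi,\tilde D\rho,D,B_{\mathcal R})\,dy
\]
at $k=0,1$. The paper makes the specific choice $S=0$, $\rho_1=0$, which keeps $\rho_0$ and $\rho_2$ completely free.

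Where your plan diverges is in Assertions (3) and (4). You propose to import the index-shifting of Lemma~\ref{lem-3.4} and the warped-torus computation of Lemma~\ref{lem-3.5}; the paper does none of this. Once $\varepsilon_\alpha^0=0$ is known, the \emph{same} recursion at $k=2$ (still with $S=0$, $\rho_1=0$) gives
\[
\int_{\partial M}\beta_{2,\alpha}^{\partial M}(\phi,\rho,D,B_{\mathcal R})\,dy=0
\]
for \emph{arbitrary} $\phi$, arbitrary $\rho_0,\rho_2$, and arbitrary background geometry. Since the $d_\alpha^j$ terms drop out (because $\tilde B_{\mathcal R}\rho=0$) and the $\varepsilon_\alpha^8,\varepsilon_\alpha^{14}$ terms drop out (because $\rho_1=0$), every remaining invariant in Lemma~\ref{lem-4.1}(3) can be isolated, and all of Assertion (3) falls out at once. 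You underestimated this step: the differentiation identity does not merely ``kill $\varepsilon_\alpha^7$'' --- it kills $\varepsilon_\alpha^4,\dots,\varepsilon_\alpha^7,\varepsilon_\alpha^9,\dots,\varepsilon_\alpha^{13}$ simultaneously. For Assertion (4) the paper simply switches to $S=-1$, $\rho_0=\rho_1=1$ (so that $\tilde B_{\mathcal R}\rho=0$ still holds but now $\rho_1\ne0$) and repeats.

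Your route would ultimately work, but note a small gap: the torus relations of Lemma~\ref{lem-3.5} do not by themselves separate $\varepsilon_\alpha^6$, $\varepsilon_\alpha^{12}$, $\varepsilon_\alpha^{13}$; in the Dirichlet argument that separation came from the product formula (Lemma~\ref{lem-3.2}(4)), which you do not invoke. The paper's bootstrap avoids all of this machinery.
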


\begin{proof} Assertion (1) follows using the same arguments used to prove Lemma \ref{lem-3.6} (1); Equation (\ref{eqn-3.h}) then
generalizes to become
$$
{\textstyle\frac{1+k-\alpha}2}\int_{\partial M}\beta_{k,\alpha}^{\partial M}(\phi,\rho,D,B_{\mathcal{R}})dy
=-\int_{\partial M}\beta_{k-2,\alpha}^{\partial M}(\phi,\tilde D\rho,D,B_{\mathcal{R}})dy\,.
$$
We take $S=0$ and $\rho_1=0$; $\rho_0$ and $\rho_2$ are then arbitrary. Since $\beta_{-2,\alpha}^{\partial M}=0$
and $\beta_{-1,\alpha}^{\partial M}=0$, Assertion (2) follows. This implies that $\beta_{0,\alpha}^{\partial
M}(\phi,\rho,D,B_{\mathcal{R}})=0$ and a similar argument now establishes Assertion (3). We now take $S=-1$ and $\rho_0=\rho_1=1$ to
establish Assertion (4).
\end{proof}

The constants $d_\alpha^1$, $d_\alpha^2$, and $d_\alpha^3$ can be determined by a $1$-dimensional calculation. We adopt the following
notational conventions.  Let $M:=[0,1]$, let $A:=\partial_x+b$ where $b\in C^\infty(M)$ is real valued, let $A^*:=-\partial_x+b$,
let $D_1:=A^*A$, let $D_2:=AA^*$, and let $B_{\mathcal{R}}\phi:=A\phi|_{\partial M}$. The inward unit normal is
$\partial_x$ near $x=0$ and $-\partial_x$ near $x=1$. Thus this is a Robin boundary condition with $S(0)=b(0)$ and
$S(1)=-b(1)$.

\begin{lemma}\label{lem-4.3}
Let $\alpha\in\mathbb{C}-\mathbb{Z}$ with $\operatorname{Re}(\alpha)<0$. Adopt the notation established above.
\begin{enumerate}
\smallbreak\item $\partial_t\beta(\phi,\rho,D_1,B_{\mathcal{R}})(t)=-\beta(A\phi,A\rho,D_2,{B_{\mathcal{D}}})(t)$.
\item $\int_{\partial M}\beta_{k,\alpha}^{\partial M}(\phi,\rho,D_1,B_{\mathcal{R}})dy=-\frac{2}{1+k-\alpha}\int_{\partial M}
\beta_{k-1,\alpha+1}^{\partial M}(A\phi,A\rho,D_2,B_{\mathcal{D}})dy$.
\smallbreak\item $d_\alpha^1=\frac{2\alpha}{2-\alpha}c_{\alpha+1}$.
\item $d_\alpha^2\phi_1+d_\alpha^3S\phi_0=-\frac{2}{3-\alpha}c_\alpha\{(1-\alpha)\phi_1+S\phi_0\}$.
\end{enumerate}
\end{lemma}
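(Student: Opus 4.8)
The plan is to obtain Assertion (1) from the intertwining identity $AD_1=D_2A$, deduce Assertion (2) by differentiating the small-$t$ expansion of Theorem \ref{thm-1.4}(1) term by term, and then read off $d_\alpha^1,d_\alpha^2,d_\alpha^3$ by specializing Assertion (2) to $k=1$ and $k=2$ and invoking the Dirichlet formulas of Theorem \ref{thm-1.5}(1a,1b), which are already available by Section \ref{sect-3}. For Assertion (1), start from $\partial_t\beta(\phi,\rho,D_1,B_{\mathcal R})(t)=-\langle D_1e^{-tD_{1,B_{\mathcal R}}}\phi,\rho\rangle$ and write $D_1=A^*A$. Integration by parts on $[0,1]$ gives $\langle A^*\psi,\rho\rangle=\langle\psi,A\rho\rangle-[\psi\rho]_0^1$; taking $\psi=Ae^{-tD_{1,B_{\mathcal R}}}\phi$, the boundary term vanishes since the Robin condition $A\phi|_{\partial M}=0$ is preserved by the semigroup, so $\psi|_{\partial M}=0$. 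Hence $\partial_t\beta(\phi,\rho,D_1,B_{\mathcal R})(t)=-\langle Ae^{-tD_{1,B_{\mathcal R}}}\phi,A\rho\rangle$. Now $v(t):=Ae^{-tD_{1,B_{\mathcal R}}}\phi$ satisfies $\partial_tv=-AD_1e^{-tD_{1,B_{\mathcal R}}}\phi=-D_2v$ because $AA^*A=D_2A$, obeys Dirichlet conditions, and has initial value $A\phi$; by uniqueness $v(t)=e^{-tD_{2,B_{\mathcal D}}}(A\phi)$, so the right-hand side is $-\beta(A\phi,A\rho,D_2,B_{\mathcal D})(t)$. The hypothesis $\operatorname{Re}(\alpha)<0$ guarantees that $\phi$ is continuous up to and vanishes on $\partial M$, that $A\phi$ has radial blowup governed by the parameter $\alpha+1$, and that all heat contents in sight are well defined.

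For Assertion (2), substitute the expansions of Theorem \ref{thm-1.4}(1) into both sides of Assertion (1). Since $\alpha\notin\mathbb Z$, the powers $t^{(1+k-\alpha)/2}$ on the left become $t^{(k-1-\alpha)/2}$ after $\partial_t$, with coefficient $\tfrac{1+k-\alpha}{2}\int_{\partial M}\beta_{k,\alpha}^{\partial M}(\phi,\rho,D_1,B_{\mathcal R})dy$, and these never coincide with the integer powers coming from the $\mathcal I_{\operatorname{Reg}}$ terms on either side, so the boundary parts can be matched on their own; on the right the boundary parameter is $\alpha+1$, producing powers $t^{(j-\alpha)/2}$. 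Equating $j=k-1$ yields Assertion (2).

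For Assertions (3) and (4), work on $M=[0,1]$, $\partial M=\{0,1\}$, where $L_{aa}=0$ and, since $D_1=-\partial_x^2+(b^2-b')$ and $D_2=-\partial_x^2+(b^2+b')$ carry no first-order term, the connections of $D_1,D_2$ and their duals are flat, so $\tilde\nabla_{\partial_r}=\partial_r$ and $\tilde B_{\mathcal R}\rho=\partial_r\rho+S\rho$ at each endpoint. A short Taylor expansion near $x=0$ gives $A\phi\sim-\alpha\phi_0r^{-\alpha-1}+\{(1-\alpha)\phi_1+S\phi_0\}r^{-\alpha}+\cdots$, and near $x=1$ (where $\partial_r=-\partial_x$ and $S(1)=-b(1)$) each coefficient flips sign; simultaneously $(A\rho)_0=\tilde B_{\mathcal R}\rho$ at $x=0$ and $(A\rho)_0=-\tilde B_{\mathcal R}\rho$ at $x=1$. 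The signs cancel in the pairings, so at \emph{both} endpoints $\langle(A\phi)_0,(A\rho)_0\rangle=-\alpha\langle\phi_0,\tilde B_{\mathcal R}\rho\rangle$ and $\langle(A\phi)_1,(A\rho)_0\rangle=\langle(1-\alpha)\phi_1+S\phi_0,\tilde B_{\mathcal R}\rho\rangle$. Taking $k=1$ in Assertion (2), using Theorem \ref{thm-1.5}(1a) (Dirichlet factor $c_{\alpha+1}$), and comparing with Lemma \ref{lem-4.1}(2) and Lemma \ref{lem-4.2} gives $d_\alpha^1=\tfrac{2\alpha}{2-\alpha}c_{\alpha+1}$; taking $k=2$, using Theorem \ref{thm-1.5}(1b) (Dirichlet factor $c_{(\alpha+1)-1}=c_\alpha$, with the $L_{aa}$ term absent), and comparing with Lemma \ref{lem-4.1}(3) and Lemma \ref{lem-4.2} gives $d_\alpha^2\phi_1+d_\alpha^3S\phi_0=-\tfrac{2}{3-\alpha}c_\alpha\{(1-\alpha)\phi_1+S\phi_0\}$. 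That the $d_\alpha^j$ are dimension-free universal constants, so that this one-dimensional computation determines them, follows from a product argument as in Lemma \ref{lem-3.2}; localizing $\phi$ near $x=0$ and varying $\phi_0$, $\phi_1$, and $S(0)$ independently then isolates the individual coefficients.

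I expect the crux to be Assertion (1): one must verify that $A$ really carries the Robin problem for $D_1$ onto the Dirichlet problem for $D_2$ (in particular that the vanishing of $A\phi$ on $\partial M$ \emph{is} the Dirichlet condition for $D_2$, and that the commutation holds on the domains of the realizations), and, in Assertions (3)--(4), that the inward-normal sign discrepancies at $x=1$ versus $x=0$ in the jets of $A\phi$, in $(A\rho)_0$, and in $\tilde B_{\mathcal R}$ conspire to reproduce a single well-defined $\partial M$-invariant.
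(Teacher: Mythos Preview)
Your proposal is correct and follows essentially the same route as the paper. The only cosmetic differences are: (i) in Assertion (1) you integrate by parts first (killing the boundary term via the Robin condition on $e^{-tD_{1,B_{\mathcal R}}}\phi$) and then identify $Ae^{-tD_{1,B_{\mathcal R}}}\phi$ with $e^{-tD_{2,B_{\mathcal D}}}A\phi$ by uniqueness, whereas the paper first invokes the intertwining $Ae^{-tD_{1,B_{\mathcal R}}}=e^{-tD_{2,B_{\mathcal D}}}A$ and then integrates $A^*$ across (killing the boundary term via the Dirichlet condition); (ii) for Assertions (3)--(4) the paper simply takes $b,\phi,\rho$ to vanish near $r=1$ so that only $r=0$ contributes, rather than tracking the sign cancellations at both endpoints as you do.
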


\begin{proof} {We generalize the proof of Lemma 2.1.15 \cite{G04} where a similar result is established for $\alpha=0$. One has
that
$Ae^{-tD_{1,B_{\mathcal{R}}}}= e^{-tD_{2,B_{\mathcal{D}}}}A$ on sufficiently smooth functions. Thus we may establish Assertion (1) by
noting:
\begin{eqnarray*}
\partial_t\langle e^{-tD_{1,B_{\mathcal{R}}}}\phi,\rho\rangle &=& -\langle A^*Ae^{-tD_{1,B_{\mathcal{R}}}}\phi,\rho\rangle=
-\langle A^*e^{-tD_{2,B_{\mathcal{D}}}}A\phi,\rho\rangle\\ & = &-
\langle e^{-tD_{2,B_{\mathcal{D}}}}A\phi,A\rho\rangle
\end{eqnarray*}
where the middle equality is justified by the boundary
condition $B_{\mathcal{D}}$. }

Since $r^{\alpha}\phi\in C^\infty(V)$, we have $r^{\alpha+1}A\phi\in C^\infty(V)$. Thus
\begin{eqnarray*}
&&\sum_{k=0}^\infty{\textstyle\frac{1+k-\alpha}2}t^{(1+k-\alpha)/2-1}\int_{\partial M}\beta_{k,\alpha}^{\partial
M}(\phi,\rho,D,{B_{\mathcal{R}}})dy\\ &\sim&-\sum_{\ell=0}^\infty t^{(1+\ell-(\alpha+1))/2}\int_{\partial M}\beta_{\ell,\alpha+1}^{\partial
M}(A\phi,A\rho,D,{B_{\mathcal{R}}})dy\,.
\end{eqnarray*}
Setting $\ell=k-1$ and equating coefficients of $t^{(-1-k-\alpha)/2}$ yields Assertion (2).

The operators
$$D_1=-(\partial_x^2+(b^\prime-b^2))\quad\text{and}\quad D_2=-(\partial_x^2+(-b^\prime-b^2))$$
determine flat connections. We suppose that $b$, $\phi$, and $\rho$ vanish identically near $r=1$ so only the point $r=0$ is relevant.
We set $\phi_{-1}:=0$ and expand:
$$
\displaystyle\phi\sim\sum_{i=0}^\infty\phi_ir^{i-\alpha},\quad
\displaystyle A\phi\sim\sum_{i=0}^\infty\left\{(i-\alpha)\phi_i+b\phi_{i-1}\right\}r^{i-\alpha-1}\,.
$$
It now follows that $(A\phi)_0=-\alpha\phi_0$ and $(A\phi)_1=(1-\alpha)\phi_1+b\phi_0$. We apply
Assertion (2) with $k=1$ and $k=2$ and we apply Theorem \ref{thm-1.5} (1) to see:
\begin{eqnarray*}
&&\int_{\partial M}\langle d_\alpha^1\phi_0,A\rho\rangle dy=-\frac{2}{2-\alpha}c_{\alpha+1}
\int_{\partial M}\langle-\alpha\phi_0,A\rho\rangle dy,\\
&&\int_{\partial M}\langle d_\alpha^2\phi_1+d_\alpha^3b\phi_0,A\rho\rangle dy
=-\frac{2}{3-\alpha}c_\alpha\int_{\partial M}\langle(1-\alpha)\phi_1+b\phi_0,A\rho\rangle dy\,.
\end{eqnarray*}
Assertions (2) and (3) now follow.
\end{proof}

We extend Lemma \ref{lem-3.5} to the setting at hand to complete the proof of Theorem \ref{thm-1.5} (3).
\begin{lemma}\label{lem-4.4}
Adopt the notation established in Lemma
\ref{lem-3.5}. Let $S=\frac12\sum_af_a^\prime$ define Robin boundary conditions.
Then
\begin{enumerate}
\item $\int_{\partial M}\beta_{2,\alpha}^{\partial M}(\phi,\rho_M,D_M,B_{\mathcal{R}})dy=0$.
\smallbreak\item $d_\alpha^4=-\frac\alpha{3-\alpha}c_\alpha$.
\end{enumerate}
\end{lemma}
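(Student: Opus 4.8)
The plan is to treat the two assertions separately: Assertion (1) comes from a one–dimensional reduction in the spirit of Lemma~\ref{lem-3.5}, and Assertion (2) is then extracted from it using the Weyl form of Lemma~\ref{lem-4.1} together with the constants already determined in Lemmas~\ref{lem-4.2} and~\ref{lem-4.3}.

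For Assertion~(1), the point is that the choice $S=\tfrac12\sum_af_a'$ is exactly what degenerates the Robin operator on the class of functions relevant here. If $u$ depends only on $r$ then $u_{;m}=\nabla_{\partial_r}u=\partial_r u+\omega_m u$, and since $\omega_m=-\tfrac12\sum_af_a'$ (recorded in the proof of Lemma~\ref{lem-3.5}) we get $B_{\mathcal R}u=(u_{;m}+Su)|_{\partial M}=\partial_r u|_{r=0}$, i.e.\ a plain Neumann condition. Because $\phi=\Theta(r)r^{-\alpha}$ depends only on $r$ and $D_M$ acts as $-\partial_r^2$ on such functions, the heat equation on $M$ decouples and $u_B(x;t)=u(r;t)$, where $u$ solves the Neumann heat equation on $[0,1]$ with datum $\phi$ (the face $r=1$ contributes only $O(t^\infty)$ because $\phi$ vanishes there). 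As $\rho_M\,dx=dy\,dr$, the heat content equals $(2\pi)^{m-1}\int_0^1u(r;t)\,dr$, which by conservation of heat under the Neumann condition is $(2\pi)^{m-1}\int_0^1\phi\,dr+O(t^\infty)=\mathcal I_{\operatorname{Reg}}(\phi,\rho_M)+O(t^\infty)$. Comparing with the expansion of Theorem~\ref{thm-1.4}, and using $\tilde D_M\rho_M=0$ exactly as in Lemma~\ref{lem-3.5}, forces every boundary coefficient to vanish; in particular $\int_{\partial M}\beta_{2,\alpha}^{\partial M}(\phi,\rho_M,D_M,B_{\mathcal R})\,dy=0$.

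For Assertion~(2), substitute this family into Lemma~\ref{lem-4.1}(3). By Lemma~\ref{lem-4.2} all the constants $\varepsilon_\alpha^i$ vanish, so the identity of part~(1) reads
\[
0=\int_{\partial M}\big\langle\, d_\alpha^2\phi_1+d_\alpha^3 S\phi_0+d_\alpha^4 L_{aa}\phi_0,\ \tilde B_{\mathcal R}\rho_M\,\big\rangle\,dy .
\]
One then reads off the boundary data of the family from Equations~(\ref{eqn-3.e})--(\ref{eqn-3.f})---$\phi_0=1$, $\phi_1=-\tfrac12\sum_af_a'(0)$, $L_{aa}=-\sum_af_a'(0)$, $S|_{\partial M}=\tfrac12\sum_af_a'(0)$---and computes $\tilde B_{\mathcal R}\rho_M$ from the dual connection $\tilde\nabla$ (normal $1$-form $\tfrac12\sum_af_a'$) and the dual Robin endomorphism $\tilde S$. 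Inserting the values $d_\alpha^2=-\tfrac{2(1-\alpha)}{3-\alpha}c_\alpha$ and $d_\alpha^3=-\tfrac{2}{3-\alpha}c_\alpha$ supplied by Lemma~\ref{lem-4.3}(4) turns the displayed identity into a single scalar equation whose only remaining unknown is $d_\alpha^4$; since the $f_a$ are arbitrary, solving it yields $d_\alpha^4=-\tfrac{\alpha}{3-\alpha}c_\alpha$.

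The main obstacle is the bookkeeping in this last step: one must check that for this particular family no term of Lemma~\ref{lem-4.1}(3) other than the one carrying $d_\alpha^4$ survives, and one must evaluate $\tilde B_{\mathcal R}\rho_M$ with care, since a sign error in the dual connection or in the dual Robin endomorphism would propagate directly into $d_\alpha^4$. By contrast, the reduction to the interval, the conservation-of-heat argument, and the identity $\tilde D_M\rho_M=0$ are routine, being the Robin analogues of the corresponding steps in the proof of Lemma~\ref{lem-3.5}.
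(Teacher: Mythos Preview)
Your reduction for Assertion~(1) is fine and is essentially the paper's argument: with $S=\tfrac12\sum_af_a'$ the Robin operator on radial functions collapses to the pure Neumann operator on $[0,1]$, and the heat content decouples accordingly.

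The gap is in Assertion~(2). With the specific heat $\rho_M=e^{-\sum_af_a}$ of Lemma~\ref{lem-3.5} you have, from Equation~(\ref{eqn-3.e}), $\rho_0=1$ and $\rho_1=-\tfrac12\sum_af_a'(0)$; since $\tilde S=S=\tfrac12\sum_af_a'(0)$ in this scalar setting, $\tilde B_{\mathcal R}\rho_M=\rho_1+\tilde S\rho_0=0$. Thus every term in the bracket of Lemma~\ref{lem-4.1}(3) that carries a factor $\tilde B_{\mathcal R}\rho$ vanishes, and your displayed identity degenerates to $0=0$; no constraint on $d_\alpha^4$ can be extracted. (This is not a coincidence: your Assertion~(1) argument is really just Lemma~\ref{lem-4.2}(1) applied to this $\rho_M$, so it contains no information beyond $\tilde B_{\mathcal R}\rho_M=0$.)

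The paper sidesteps this by tacitly replacing $\rho_M$ with $\rho=r\chi(r)e^{-\sum_af_a}$, so that $\rho\,dx=r\chi(r)\,dy\,dr$, $\rho_0=0$, $\rho_1=1$, and hence $\tilde B_{\mathcal R}\rho=1$. The one-dimensional reduction then reads $\beta(\phi,\rho,D_M,B_{\mathcal R})(t)=(2\pi)^{m-1}\beta(\phi,r\chi,-\partial_r^2,B_{\mathcal N})(t)$, and on the flat interval one has $\phi_1=0$, $S=0$, $L_{aa}=0$, so $\beta_{2,\alpha}^{\partial M}$ vanishes there, giving Assertion~(1) for this $\rho$. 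Now the identity $0=\langle d_\alpha^2\phi_1+d_\alpha^3S\phi_0+d_\alpha^4L_{aa}\phi_0,\,1\rangle$ with $\phi_1=-\tfrac12\sum_af_a'$, $S=\tfrac12\sum_af_a'$, $L_{aa}=-\sum_af_a'$ is genuinely nontrivial and yields $d_\alpha^4=-\tfrac{\alpha}{3-\alpha}c_\alpha$. The fix to your argument is exactly this change of $\rho$; the rest of your outline then goes through.
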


\begin{proof} Taking into account the change in the connection, we have that ${B_{\mathcal{R}}}$ on $M$ agrees with
the pure Neumann operator $B_{\mathcal{N}}$ defined by $S=0$ on $[0,1]$. {Since one has that $\rho dx=r\chi(r) dydr$,}
$$\beta(\phi,\rho,D_M,{B_{\mathcal{R}}})(t)=(2\pi)^{m-1}\beta(\phi,r\chi,-\partial_r^2,B_{\mathcal{N}})(t)\,.$$ Assertion (1) now
follows {as} $\int_{\partial[0,1]}\beta_{2,\alpha}^{\partial M}(\phi,r\chi,-\partial_r^2,B_{\mathcal{N}})dr=0$.

To prove Assertion (2), it is simply a matter of disentangling everything. We use the equations of structure derived in the proof of
Lemma \ref{lem-3.4} to see:
$$\begin{array}{llll}
\phi_0=1,&\phi_1=-\frac12\sum_af_a^\prime,&\rho_0=0,&\rho_1=1,\\
S=\textstyle\frac12\sum_af_a^\prime,&L_{aa}=-\sum_af_a^\prime.\\
\end{array}$$
We may now compute:
\begin{eqnarray*}
0=\int_{\partial
M}\{-\textstyle\frac{2}{3-\alpha}c_\alpha\{(1-\alpha)(-\frac12\sum_af_a^\prime)+\frac12\sum_af_a^\prime)
+d_\alpha^4(-\sum_af_a^\prime)\}dy.
\end{eqnarray*}
It now follows that $d_\alpha^4=-\frac\alpha{3-\alpha}c_\alpha$.
\end{proof}

\section*{Acknowledgements} The authors acknowledge support by the Isaac Newton Institution in
Cambridge during the Spectral Theory Program in July 2006 where this research began.
Research of P. Gilkey was also partially
supported by the Max Planck Institute for Mathematics in the Sciences (Germany) and by Project MTM2006-01432 (Spain).


\begin{thebibliography}{AAA}

\bibitem{Be06} M. van den Berg,
Heat content and Hardy inequality for complete Riemannian manifolds,
\emph{J. Funct. Anal.}  {\bf 233}  (2006), 478--493.

\bibitem{Be07} M. van den Berg,
Heat flow and Hardy inequality in complete Riemannian manifolds with singular initial conditions,
\emph{J. Funct. Anal.} \textbf{250} (2007), 114--131.

\bibitem{BeGi94} M. van den Berg and P. Gilkey,
 Heat content asymptotics of a Riemannian manifold with boundary,
\emph{J. Funct. Anal.} \textbf{120} (1994), 48--71.

\bibitem{BeGi04} M. van den Berg and P. Gilkey,
 Heat content and a Hardy inequality for complete Riemannian manifolds,
 \emph{ Bull. London Math. Soc.} {\bf 36} (2004), 577--586.

\bibitem{BeGiKiSe08} M. van den Berg, P. Gilkey, K. Kirsten, and R. Seeley,
Heat trace asymptotics with singular smearing functions,
forthcoming.

\bibitem{BeGeKiKo07} M. van den Berg, P. Gilkey, K. Kirsten, and V. A. Kozlov,
Heat content asymptotics for Riemannian manifolds with Zaremba boundary conditions,
\emph{Potential Analysis} \textbf{26} (2007), 225--254.

\bibitem{CaJa86} H. S. Carslaw and J. C. Jaeger,
\emph{Conduction of heat in solids}, Oxford Science Publications,
The Clarendon Press, Oxford University Press, New York, 1988.

\bibitem{G95} P. Gilkey,
\emph{Invariance theory, the heat equation, and the Atiyah-Singer
index theorem}, 2-nd edition, Studies in Advanced Mathematics, CRC
press, Boca Raton, 1995.

\bibitem{G04} P. Gilkey,
\emph{Asymptotic formulae in spectral geometry}, Studies in
Advanced Mathematics, Chapman \& Hall/CRC, Boca Raton, 2004.

\bibitem{Gr68} P. Greiner, An asymptotic expansion for the heat equation,
\emph{1970 Global Analysis}, Proc. Sympos. Pure Math \textbf{XVI} Berkeley CA (1968), 133--135.

\bibitem{Gru96} G. Grubb, \emph{Functional Calculus of Pseudodifferential
Boundary Problems}, Progress in Mathematics, \textbf{65},
Birkh\"{a}user, Boston (1996).

\bibitem{Se69} R. T. Seeley, Analytic extension of the trace associated with elliptic boundary problems,
\emph{Amer. J. Math.} \textbf{91} (1969), 963--983.

\bibitem{Se69a} R. Seeley, The resolvent of an elliptic boundary value problem,
\emph{Amer. J. Math.} \textbf{91} (1969), 889--920.

\end{thebibliography}
\end{document}